\newfont{\gothic}{eufm10}
   \def\C{{\CC}} \def\CC{{\mathbb{C}}}
                   \def\R{{\RR}} 
\def\RR{{\mathbb{R}}}
        \newtheorem{theorem}{Theorem}[section]
\newtheorem{lemma}[theorem]{Lemma}
\newtheorem{proposition}[theorem]{Proposition}
\newtheorem{corollary}[theorem]{Corollary}
\newtheorem{definition1}[theorem]{Definition}
\newenvironment{definition}{\begin{definition1}\rm}{\hfill $\triangle$\end{definition1}}
\newenvironment{proof}{\addvspace\baselineskip\noindent{\it
Proof:}\quad}{\hspace*{\fill}         $\Box$\par\addvspace\baselineskip}
\newenvironment{proofof}[1]{\addvspace\baselineskip\noindent{\it Proof}\quad}{\hspace*{\fill} $\Box$\par\addvspace\baselineskip}
\newtheorem{remark1}[theorem]{Remark}
\newenvironment{remark}{\begin{remark1}\rm}{\hfill $\triangle$\end{remark1}}
\newtheorem{example1}[theorem]{Example}
\newenvironment{example}{\begin{example1}\rm}{\hfill $\triangle$\end{example1}}
\def\barray{\begin{eqnarray*}}             \def\earray{\end{eqnarray*}}
\def\beq{\begin{equation}} \def\eeq{\end{equation}}
\makeatletter \title{Coupled cell networks: \\ semigroups, Lie algebras and normal forms}
\author{Bob Rink\thanks{Department of Mathematics, VU University Amsterdam, The Netherlands, {\tt b.w.rink@vu.nl}.} \ and Jan Sanders\thanks{Department of Mathematics, VU University Amsterdam, The Netherlands, {\tt jan.sanders.a@gmail.com}.}
}
\begin{document}  \hyphenation{boun-da-ry mo-no-dro-my sin-gu-la-ri-ty ma-ni-fold ma-ni-folds re-fe-rence se-cond se-ve-ral dia-go-na-lised con-ti-nuous thres-hold re-sul-ting fi-nite-di-men-sio-nal ap-proxi-ma-tion pro-per-ties ri-go-rous mo-dels mo-no-to-ni-ci-ty pe-ri-o-di-ci-ties mi-ni-mi-zer mi-ni-mi-zers know-ledge ap-proxi-mate pro-per-ty poin-ting ge-ne-ra-li-za-tion}
 
\newcommand{\X}{\mathbb{X}}

\newcommand{\p}{\partial}
\maketitle
\noindent 
\abstract{\noindent We introduce the concept of a semigroup coupled cell network and show that the collection of semigroup network vector fields forms a Lie algebra. This implies that near a dynamical equilibrium the local normal form of a semigroup network is a semigroup network itself. Networks without the semigroup property will support normal forms with a more general network architecture, but these normal forms nevertheless possess the same symmetries and synchronous solutions as the original network. We explain how to compute Lie brackets and normal forms of coupled cell networks and we characterize the SN-decomposition that determines the normal form symmetry. This paper concludes with a generalization to nonhomogeneous networks with the structure of a semigroupoid.
}
\section{Introduction}
Coupled cell networks appear in many of the sciences and range from crystal models and electrical circuits to numerical discretization schemes, Josephson junction arrays, power grids, the world wide web, ecology, neural networks and systems biology. Not surprisingly, there exists an overwhelming amount of literature on coupled cell networks.

The last decade has seen the development of an extensive mathematical theory of dynamical systems with a network structure, cf. \cite{field}, \cite{curious}, \cite{golstew}, \cite{stewartnature}, \cite{pivato}. 
In these network dynamical systems, the evolution of the state of a constituent or ``cell'' is determined by the states of certain particular other cells. It is generally believed that a network structure has an impact on the behavior of a dynamical system, but it is not always clear how and why.
 
 As an example, let us mention a system of differential equations with a
homogeneous coupled cell network structure of the form 
 \begin{align}\label{diffeqnintro}
 \dot x_i =  f(x_{\sigma_1(i)}, \ldots, x_{\sigma_n(i)}) \ \mbox{for} \ 1\leq i\leq N.
\end{align}
These differential equations generate a dynamical system in which the evolution of the variable $x_i$ is only determined by the values of $x_{\sigma_1(i)}, \ldots, x_{\sigma_n(i)}$. The functions
$$\sigma_1, \ldots, \sigma_n: \{1, \ldots, N\} \to \{1, \ldots, N\}$$
can thus be thought of as a network that prescribes how which cells influence which cells.

The literature on network dynamical systems focuses on the analysis of equilibria, periodic solutions, symmetry, synchrony, structural stability and bifurcations. As in the classical theory of dynamical systems, one often faces the task here of computing a local normal form near a dynamical equilibrium.  These normal forms are obtained from coordinate transformations, and in their
computation one calculates Lie brackets of vector fields, either implicitly or explicitly. It is here that one encounters an important technical problem:
\begin{center}
 {\it Differential equations of the form (\ref{diffeqnintro}) in general do not form a Lie-algebra}. 
\end{center}
As a consequence one can not expect that the normal form of a coupled cell network is a coupled cell network as well. This complicates the local analysis and classification of network dynamical systems, because it means that one always has to compute the normal form of a network explicitly to understand its generic behavior - unless one is willing to assume that the network is given in normal form from the beginning, cf. \cite{curious}, \cite{claire}. Normal form computations in \cite{elmhirst}, \cite{krupa}, \cite{claire2}  have revealed that a network structure can have a nontrivial impact on this generic behavior. One wants to understand and predict this. 

In this paper, we will formulate an easily veri\-fiable condition on a network structure under which the coupled cell network vector fields do form a Lie subalgebra of the Lie algebra of vector fields. 
Our main result is the following:
\begin{center}
{\it If $\{\sigma_1, \ldots, \sigma_n\}$ is a semigroup, then the differential equations (\ref{diffeqnintro}) form a Lie algebra.\\ In this case, the local normal form of (\ref{diffeqnintro}) is also of the form (\ref{diffeqnintro}).}
\end{center}
In addition, we show that the Lie bracket of semigroup coupled cell network vector fields can be lifted to a symbolic bracket that only involves the function \(f\). Normal form calculations can be performed at this symbolic level and one only returns to the reality of the differential equation when one is done computing. We also show that the symbolic space carries a dynamics of its own, determined by a certain fundamental network. 

This situation is analogous to that of Hamiltonian vector fields, of which the Lie bracket is determined by the Poisson bracket of Hamiltonian functions. As a consequence, Hamiltonian normal forms are usually computed at the level of functions. Moreover, the symbolic dynamics of Hamiltonian functions is determined by a Poisson structure, cf. \cite{M&R}.

When $\sigma_1, \ldots, \sigma_n$ do not form a semigroup, then we suggest that one simply completes them to the smallest collection
$$\sigma_1,\ldots, \sigma_n, \sigma_{n+1},\ldots, \sigma_{n'}: \{1, \ldots, N\} \to \{1, \ldots, N\}$$
that does form a semigroup under composition. Then (\ref{diffeqnintro}) can be written as
$$\dot x_i = f'(x_{\sigma_1(i)}, \ldots, x_{\sigma_{n'}(i)})\ \mbox{with} \ f'(X_1, \ldots, X_n, X_{n+1}, \ldots, X_{n'}) := f(X_1, \ldots, X_n)\, .$$
The normal form of (\ref{diffeqnintro}) will now lie within the extended class of semigroup coupled cell networks, there being no guarantee that it is again of the original form (\ref{diffeqnintro}). 

Thus one can choose: either to respect any given network structure as if it were a law of nature, so that no normal form can be computed,
or to extend every network to a semigroup network and live with the consequences. 
One can object that even simple networks may need a lot of extension before they form a semigroup. But as an argument in favor of the semigroup approach, let us mention that the symmetries and synchrony spaces of a network are not at all affected by our semigroup extension. This implies in particular that these symmetries and synchrony spaces will also be present in the local
normal form of the network. This latter property is both pleasant and important, if only in view of the large amount of research that has been devoted to symmetry \cite{anto1}, \cite{anto2}, \cite{dias}, \cite{filipski}, \cite{pivato2}, \cite{golstew4} and synchrony \cite{antonelli2}, \cite{antonelli}, \cite{anto4}, \cite{jeroen}, \cite{dionne1}, \cite{dionne2}, \cite{romano},  \cite{golstew3}, \cite{golstew2}, \cite{torok}, \cite{stewart1}, \cite{pivato}, \cite{wang} in coupled cell networks. Semigroups may well be the natural invariants of coupled cell networks, even more than groups and symmetries. 

Normal forms are computed by applying coordinate transformations \cite{murdock}, \cite{sanders2}, \cite{sanders1}, \cite{sanvermur}. These transformations can be in the phase space of a differential equation, but in our case they take place in the space of functions $f$ and have the form of a series expansion
\[
f\mapsto e^{{\rm ad}^{\Sigma}_g}f = f + {\rm ad}^{\Sigma}_g(f) + \frac{1}{2}({\rm ad}^{\Sigma}_g)^2(f) + \ldots \, .
\]
Here \(f\) is the function to be transformed and normalized, \(g\) generates the coordinate transformation and
\({\rm ad}^{\Sigma}\) denotes a representation, in this case the adjoint representation of the Lie algebra of $f$'s. Although at first sight this may seem a needlessly complicated way to describe coordinate transformations, this ``Lie formalism'' allows for a very flexible theory which streamlines both
the theory and the computations.

The actual computation of the normal form of the function $f$, and in particular the matter of solving homological equations,
will not be entirely standard in the context of networks. Some things remain as in the theory of generic vector fields. For example, we show that the adjoint action of a linear element admits an SN-decomposition that determines a normal form symmetry.
Other aspects may not carry through, such as the applicability of the Jacobson-Morozov lemma to characterize the complement of the image of
the adjoint action of a nilpotent element \cite{sanderscushman}. This is because the Lie algebra of the linear coupled cell network vector fields need not be reductive.  

This paper is organized as follows. After giving a formal definition of a homogeneous coupled cell network in Section \ref{homogeneoussection}, we show in Section \ref{semigroupsection} that semigroups arise naturally in the context of coupled cell networks. In Sections \ref{compositionsection} and \ref{bracketsection} we prove that semigroup network dynamical systems are closed under taking compositions and Lie brackets. Section \ref{normalformsection} explains how to compute the normal form of a network dynamical system, while in Sections \ref{synchronysection} and \ref{symmetrysection} we prove that this normal form inherits both the symmetries and the synchrony spaces of the original network. In Section \ref{SNsection} we investigate the SN-decomposition of a linear coupled cell network vector field. This decomposition determines the normal form symmetry. Section \ref{coveringsection} describes the aforementioned fundamental network. In Section \ref{examplessection} we actually compute the normal forms of some simple but interesting coupled cell networks, thus demonstrating that a coupled cell network structure can force anomalous steady state bifurcations. Finally, we show in Section \ref{generalizationssection} that our theory is also applicable to non-homogeneous or ``colored'' networks that display the structure of a semigroupoid.  

Issues that we do not touch in this paper but aim to treat in subsequent work include: 
\begin{itemize}
\item[{\bf 1.}] The development of a linear algebra of semigroup coupled cell systems in order to define for example a ``semigroup network Jordan normal form''.
\item[{\bf 2.}] Application of the results in this paper to semigroup networks that arise in applications, such as feed-forward motifs.
\item[{\bf 3.}] Understanding the impact of ``input symmetries'' on bifurcations and normal forms.
\end{itemize}
 
 \section{Homogeneous coupled cell networks}\label{homogeneoussection}
 We shall be interested in dynamical systems with a coupled cell network structure. Such a structure can be determined in various ways \cite{field}, \cite{golstew}, \cite{torok}, \cite{pivato}, but we choose to describe it here by means of a collection of distinct maps
$$\Sigma=\{\sigma_1, \ldots, \sigma_n\}\ \mbox{with} \ \sigma_1,\ldots, \sigma_n: \{1, \ldots, N\}\to\{1,\ldots, N\}\, .$$
The collection $\Sigma$ has the interpretation of a network with $1\leq N < \infty$ cells. Indeed, it defines a directed multigraph with $N$ vertices and precisely $n$ arrows pointing into each vertex, where the arrows pointing towards vertex $1\leq i \leq N$ emanate from the vertices $\sigma_1(i), \ldots, \sigma_n(i)$. The number $n$ of incoming arrows per vertex is sometimes called the {\it valence} of the network. 

In a network dynamical system we think of every vertex $1\leq i\leq N$ in the network as a cell, of which the state is determined by a variable $x_i$ that takes values in a vector space $V$.
 
\begin{definition}\label{networkdefinition}
Let $\Sigma=\{\sigma_1, \ldots, \sigma_n\}$ be a collection of $n$ distinct maps on $N$ elements, $V$ a finite dimensional real vector space and $f: V^n\to V$ a smooth function. Then we define
\begin{align}\label{networkvectorfield}
\gamma_f:V^N\to V^N \ \mbox{by}\ (\gamma_f)_i(x):=f(x_{\sigma_1(i)}, \ldots, x_{\sigma_n(i)})\  \mbox{for}\ 1 \leq i \leq N. 
\end{align}
Depending on the context, we will say that $\gamma_f$ is a {\it homogeneous coupled cell network map} or a {\it homogeneous coupled cell network vector field} subject to $\Sigma$.
\end{definition}
In the literature, $\gamma_f$ is also called an {\it admissible} map/vector field for the network $\Sigma$.
 
Dynamical systems with a coupled cell network structure arise when we iterate the map $\gamma_f$ or integrate the vector field that it defines. The iterative dynamics on $V^N$ has the special property that the state of cell $i$ at time $m+1$ depends only on the states of the cells $\sigma_1(i), \ldots, \sigma_n(i)$ at time $m$:
\begin{align}\label{map}
x^{(m+1)}=\gamma_f(x^{(m)})\ \mbox{if and only if} \ x_i^{(m+1)} = f(x_{\sigma_1(i)}^{(m)}, \ldots, x^{(m)}_{\sigma_n(i)}) \ \mbox{for} \ 1\leq i\leq N.
\end{align}
The continuous-time dynamical system on $V^N$ displays the same property infinitesimally: it is determined by the ordinary differential equations  
 \begin{align}\label{diffeqn}
\dot x = \gamma_f(x)\ \mbox{if and only if} \ \dot x_i =  f(x_{\sigma_1(i)}, \ldots, x_{\sigma_n(i)}) \ \mbox{for} \ 1\leq i\leq N.
\end{align}
We aim to understand how the network structure of $\gamma_f$ impacts these dynamical systems.

 \begin{example}\label{ex1}
 An example of a directed multigraph is shown in Figure \ref{pict2}, where the number of cells is $N=3$ and valence is equal to $n=2$. The maps $\sigma_1$ and $\sigma_2$ are given by 
 \begin{align}\nonumber
& \sigma_1(1)=1, \sigma_1(2)=2, \sigma_1(3)=3\, , \\ \nonumber
& \sigma_2(1)=1, \sigma_2(2)=1, \sigma_2(3)=2\, . 
 \end{align}
 \begin{figure}[ht]\renewcommand{\figurename}{\rm \bf \footnotesize Figure} 
\centering
\begin{tikzpicture}[->,>=stealth',shorten >=1pt,auto,node distance=2cm,
                    thick,main node/.style={circle,draw,font=\sffamily\Large\bfseries}]

  \node[main node] (1) {$x_1$};
  \node[main node] (2) [below of=1] {$x_2$};
  \node[main node] (3) [below of=2] {$x_3$};

   \node[main node] (4) [right of=1] {$x_1$};
  \node[main node] (5) [below of=4] {$x_2$};
  \node[main node] (6) [below of=5] {$x_3$};

  \path[every node/.style={font=\sffamily\small}]
    (2) edge [loop above] node {} (2)
 (3) edge [loop above] node {} (3)
(1) edge [loop above] node {} (1)
    
    (5) edge node {} (6)
 (4) edge  node {} (5)
(4) edge [loop above] node {} (4)
    ;
\end{tikzpicture}
\caption{\footnotesize {\rm The collection $\{\sigma_1, \sigma_2\}$ depicted as a directed multigraph.}}
\label{pict2}
\end{figure}
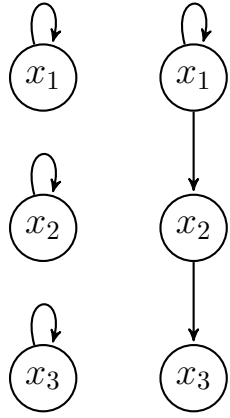

\noindent A coupled cell network map/vector field subject to $\{\sigma_1, \sigma_2\}$ is of the form
\begin{align}\label{examp1}
\gamma_f(x_1, x_2, x_3)=(f(x_1,x_1), f(x_2, x_1), f(x_3, x_2))\, .
\end{align}
This network has obtained some attention \cite{elmhirst}, \cite{curious}, \cite{claire2}, \cite{claire}, \cite{leite} because it supports an anomalous codimension-one nilpotent double Hopf bifurcation when $\dim V=2$.
\end{example}

\begin{example}\label{ex2} In this example we let $\sigma_1, \sigma_2$ be as in Example \ref{ex1} and we also define $\sigma_3$ as
$$\sigma_3(1)=1, \sigma_3(2)=1, \sigma_3(3)=1\, .$$
The network defined by $\{\sigma_1, \sigma_2, \sigma_3\}$ is depicted in Figure \ref{pict3}.
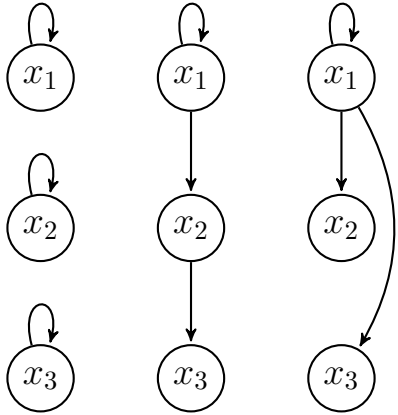
\begin{figure}[ht]\renewcommand{\figurename}{\rm \bf \footnotesize Figure}
\centering
\begin{tikzpicture}[->,>=stealth',shorten >=1pt,auto,node distance=2cm,
                    thick,main node/.style={circle,draw,font=\sffamily\Large\bfseries}]

  \node[main node] (1) {$x_1$};
  \node[main node] (2) [below of=1] {$x_2$};
  \node[main node] (3) [below of=2] {$x_3$};

   \node[main node] (4) [right of=1] {$x_1$};
  \node[main node] (5) [below of=4] {$x_2$};
  \node[main node] (6) [below of=5] {$x_3$};
  \node[main node] (7) [right of =4] {$x_1$};
  \node[main node] (8) [below  of=7] {$x_2$};
  \node[main node] (9) [below of=8] {$x_3$};

  \path[every node/.style={font=\sffamily\small}]
   (2) edge [loop above] node {} (2)
 (3) edge [loop above] node {} (3)
(1) edge [loop above] node {} (1)
    
    (5) edge node {} (6)
 (4) edge  node {} (5)
(4) edge [loop above] node {} (4)
 (7) edge [loop above] node {} (7)
 (7) edge node {} (8)
(7) edge [bend left] node {} (9)
    ;

\end{tikzpicture}
\caption{\footnotesize {\rm The collection $\{\sigma_1, \sigma_2, \sigma_3\}$ depicted as a directed multigraph.}}
\label{pict3}
\end{figure}

\noindent A coupled cell network map/vector field subject to $\{\sigma_1, \sigma_2, \sigma_3\}$ has the form
\begin{align}\label{examp2}
\gamma_g(x_1, x_2, x_3)=(g(x_1,x_1, x_1), g(x_2, x_1, x_1), g(x_3, x_2, x_1))\, .
\end{align}
We remark that this example is a generalization of Example \ref{ex1}: if $\gamma_f$ is as in Example \ref{ex1} and if we define $g(X_1, X_2, X_3):=f(X_1, X_2)$, then $\gamma_g=\gamma_f$. In other words, (\ref{examp1}) arises as a special case of (\ref{examp2}).
\end{example}

\section{Semigroups}\label{semigroupsection}
A first and obvious difficulty that arises in the study of coupled cell network dynamical systems is that the composition $\gamma_f\circ\gamma_g$ of two coupled cell network maps with an identical network structure may not have that same network structure. 

Dynamically, this implies for example that the equation $\gamma_f(x)=x$ for the steady states of $\gamma_f$ and the equation $(\gamma_f)^m(x) =x$ for its periodic solutions may have quite a different nature.  
We illustrate this phenomenon in the following example:
\begin{example}\label{excomposition}
Again, let $N=3$ and let $\sigma_1, \sigma_2, \sigma_3$ be defined as in Examples \ref{ex1} and \ref{ex2}. 
If
\begin{align}\nonumber
& \gamma_f(x_1, x_2, x_3)=(f(x_1, x_1), f(x_2, x_1), f(x_3, x_2))\, , \\
\nonumber
& \gamma_g(x_1, x_2, x_3)=(g(x_1, x_1), g(x_2, x_1), g(x_3, x_2))\, ,
\end{align} 
are coupled cell network maps subject to $\{\sigma_1, \sigma_2\}$, then the composition
$$(\gamma_f\circ \gamma_g)(x_1, x_2, x_3)= (f(g(x_1, x_1), g(x_1, x_1)), f(g(x_2, x_1), g(x_1, x_1)), f(g(x_3, x_2), g(x_2, x_1))) $$
in general is not a coupled cell network map subject to $\{\sigma_1, \sigma_2\}$.

On the other hand,  
when $\gamma_f$ and $\gamma_g$ are network maps subject to $\{\sigma_1, \sigma_2, \sigma_3\}$, i.e. 
\begin{align}\nonumber
&\gamma_f(x_1, x_2, x_3) = (f(x_1, x_1, x_1), f(x_2, x_1, x_1), f(x_3, x_2, x_1))\, , \\ &\gamma_g(x_1, x_2, x_3) = (g(x_1, x_1, x_1), g(x_2, x_1, x_1), g(x_3, x_2, x_1))\, ,
\nonumber
\end{align}
then it holds that 
\begin{align} \nonumber
&(\gamma_f\circ \gamma_g)_1(x_1, x_2, x_3)= f(g(x_1, x_1, x_1), g(x_1, x_1, x_1), g(x_1, x_1, x_1) \, , \\ \nonumber
&(\gamma_f \circ \gamma_g)_2(x_1, x_2, x_3)=f(g(x_2, x_1,x_1), g(x_1, x_1, x_1), g(x_1, x_1, x_1))\, ,\\ \nonumber
& (\gamma_f \circ \gamma_g)_3 (x_1, x_2, x_3)= f(g(x_3, x_2, x_1), g(x_2, x_1, x_1), g(x_1, x_1, x_1)))\, .
\end{align}
This demonstrates that $\gamma_f\circ \gamma_g$ is also a coupled cell network map subject to $\{\sigma_1, \sigma_2, \sigma_3\}$. Indeed, 
$\gamma_f\circ \gamma_g=\gamma_h$, where
$$h(X_1,X_2, X_3) =  f(g(X_1, X_2, X_3), g(X_2, X_3, X_3), g(X_3, X_3, X_3))\, .$$
\end{example}
To understand when, in general, the composition of two coupled cell network maps is again a coupled cell network map, we compute that
\begin{align}\label{FafterG}
(\gamma_f\circ \gamma_g)_i(x)= f(\ldots,  (\gamma_g)_{\sigma_j(i)}(x), \ldots) = f(\ldots, g(x_{\sigma_1(\sigma_j(i))}, \ldots, x_{\sigma_n(\sigma_j(i))}), \ldots) \, .
\end{align}
The right hand side of (\ref{FafterG}) is an $i$-independent function of $(x_{\sigma_1(i)}, \ldots, x_{\sigma_n(i)})$ precisely when for all $1\leq j_1, j_2\leq n$ and all $1\leq i\leq N$ it holds that $\sigma_{j_1}(\sigma_{j_2}(i))=\sigma_{j_3}(i)$ for some $1\leq j_3\leq n$. In other words, 
 $\gamma_f\circ\gamma_g$ is a coupled cell network map when $\Sigma$ is a semigroup:
\begin{definition}
 We say that $\Sigma=\{\sigma_1, \ldots, \sigma_n\}$ is a {\it semigroup} if for all $1\leq j_1, j_2\leq n$ there is a unique $1\leq j_3\leq n$ such that 
$\sigma_{j_1}\circ \sigma_{j_2}= \sigma_{j_3}$.
 \end{definition}
Viewing $\Sigma$ as a directed multigraph, the condition that it is a semigroup just means that this directed multigraph is closed under the backward concatenation of arrows.  
 
Of course, an arbitrary collection $\Sigma=\{\sigma_1,\ldots,\sigma_n\}$ need not be a semigroup. Even so, $\Sigma$ generates a unique smallest semigroup $$\Sigma'=\{\sigma_1, \ldots, \sigma_n, \sigma_{n+1},\ldots, \sigma_{n'}\} \ \mbox{that contains}\ \Sigma\, .$$
It is clear that every coupled cell network map $\gamma_f$ subject to $\Sigma$ is also a coupled cell network map subject to the semigroup $\Sigma'$. Indeed, if we define 
$$f'(X_1, \ldots, X_n, X_{n+1}, \ldots, X_{n'}) := f(X_1, \ldots, X_n)$$ 
then it obviously holds that 
$$(\gamma_{f'})_i(x) = f'(x_{\sigma_1(i)}, \ldots, x_{\sigma_n(i)}, x_{\sigma_{n+1}(i)}, \ldots, x_{\sigma_{n'}(i)}) = f(x_{\sigma_1(i)}, \ldots, x_{\sigma_n(i)}) = (\gamma_{f})_i(x)\ .$$
We thus propose to augment $\Sigma$ to the semigroup $\Sigma'$ and to think of every coupled cell network map subject to $\Sigma$ as a (special case of a) coupled cell network map subject to $\Sigma'$.  

\begin{example}\label{example1}
Again, let $N=3$ and let $\sigma_1, \sigma_2, \sigma_3$ be defined as in Examples \ref{ex1} and \ref{ex2}. It holds that $\sigma_2^2=\sigma_3$, so the collection $\{\sigma_1, \sigma_2\}$ is not a semigroup. On the other hand, one quickly computes that the composition table of $\{\sigma_1, \sigma_2, \sigma_3\}$ is given by
$$\begin{array}{c|ccc} 
\circ & \sigma_1 & \sigma_2 &\sigma_3 \\
\hline
\sigma_1 & \sigma_1 &\sigma_2 & \sigma_3 \\
\sigma_2 & \sigma_2 &\sigma_3 & \sigma_3  \\
\sigma_3 & \sigma_3 & \sigma_3 & \sigma_3
\end{array} \begin{array}{l} \\ \\ \\ .\end{array}
$$
This shows that $\{\sigma_1, \sigma_2, \sigma_3\}$ is closed under composition and hence is the smallest semigroup containing $\{\sigma_1, \sigma_2\}$. 
\end{example}

\section{Composition of network maps}\label{compositionsection}
To understand better how network maps behave under composition and in order to simplify our notation, let us define the maps
$$\pi_{i}: V^N\to V^n\ \mbox{by}\ \pi_i(x_1, \ldots, x_N):=(x_{\sigma_1(i)}, \ldots, x_{\sigma_n(i)}) \ \mbox{for} \ 1\leq i\leq N\, .$$
This definition allows us to write (\ref{networkvectorfield}) simply as
\begin{align}\label{networkdef}
(\gamma_f)_i:=f\circ \pi_i\ .
\end{align}
Expression (\ref{FafterG}) moreover turns into the formula
\begin{align}\label{compositionformula}
(\gamma_f\circ \gamma_g)_i = f\circ (g\circ \pi_{\sigma_1(i)}\times \ldots \times g\circ \pi_{\sigma_n(i)})\ . 
\end{align}
The following technical result helps us write the right hand side of (\ref{compositionformula}) in the form $h\circ\pi_i$ for some function $h:V^n\to V$, whenever $\Sigma$ is a semigroup.
\begin{theorem}\label{Amaps}
Let $\Sigma=\{\sigma_1,\ldots, \sigma_n\}$ be a semigroup. 
Then for all $1\leq j \leq n$ there exists a linear map 
$A_{\sigma_j} : V^n\to V^n$ with the property that
\begin{align}\label{formulaA} A_{\sigma_j}\circ \pi_{i} = \pi_{\sigma_j(i)} \ \mbox{for all} \ 1\leq i \leq N\ \mbox{and all}\ 1\leq j \leq n\, .
\end{align}
Moreover, it holds that $A_{\sigma_{j_1}}\circ A_{\sigma_{j_2}}= A_{ \sigma_{j_1}\circ \sigma_{j_2}}$ for all $1\leq j_1, j_2 \leq n$.
\end{theorem}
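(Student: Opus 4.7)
The plan is to construct each $A_{\sigma_j}$ explicitly as a coordinate-selection map on $V^n$, using the semigroup structure of $\Sigma$ to prescribe the selection. Since $\Sigma$ is a semigroup, for every pair $1\leq k, j\leq n$ there is a unique index $1\leq \mu(k,j)\leq n$ such that $\sigma_k\circ \sigma_j = \sigma_{\mu(k,j)}$. This tells me that $x_{\sigma_k(\sigma_j(i))} = x_{\sigma_{\mu(k,j)}(i)}$, so the $k$-th entry of $\pi_{\sigma_j(i)}(x)$ can be read off as the $\mu(k,j)$-th entry of $\pi_i(x)$. I therefore define $A_{\sigma_j}:V^n\to V^n$ by $(A_{\sigma_j}(X_1,\ldots,X_n))_k := X_{\mu(k,j)}$ for $1\leq k \leq n$. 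This is manifestly linear, being a projection onto (repeated) coordinates.

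With this definition in hand, I verify the identity $A_{\sigma_j}\circ \pi_i = \pi_{\sigma_j(i)}$ by comparing $k$-th components. The left-hand side has $k$-th entry $(\pi_i(x))_{\mu(k,j)} = x_{\sigma_{\mu(k,j)}(i)}$, and by the defining property of $\mu$ this equals $x_{\sigma_k(\sigma_j(i))}$, which is exactly the $k$-th component of $\pi_{\sigma_j(i)}(x)$. Hence the two $n$-tuples agree entry by entry, and the identity holds for every $1\leq i\leq N$.

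For the composition identity, I write $\sigma_{j_1}\circ\sigma_{j_2} = \sigma_{j_3}$ (which is possible because $\Sigma$ is a semigroup) and compare the $k$-th entries of both sides acting on $(X_1,\ldots,X_n)$. The left side yields $X_{\mu(\mu(k,j_1),j_2)}$, while the right side yields $X_{\mu(k,j_3)}$. Associativity of composition of set maps then gives
\[
\sigma_k\circ \sigma_{j_3} = \sigma_k\circ(\sigma_{j_1}\circ\sigma_{j_2}) = (\sigma_k\circ\sigma_{j_1})\circ\sigma_{j_2} = \sigma_{\mu(k,j_1)}\circ\sigma_{j_2} = \sigma_{\mu(\mu(k,j_1),j_2)}\, ,
\]
and invoking the \emph{uniqueness} clause in the semigroup definition forces $\mu(k,j_3)=\mu(\mu(k,j_1),j_2)$, which is the desired equality.

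I do not foresee a substantial obstacle. The only conceptually delicate point is that the uniqueness part of the semigroup axiom is what makes the index $\mu(k,j)$ a well-defined function of its arguments; without it, the very definition of $A_{\sigma_j}$ would be ambiguous, and the composition formula (which is effectively the statement that $\sigma_j\mapsto A_{\sigma_j}$ is a semigroup anti-homomorphism into linear self-maps of $V^n$) would fail to make sense. Everything else is a bookkeeping exercise on the coordinates of $\pi_i$.
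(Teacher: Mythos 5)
Your proof is correct and follows essentially the same route as the paper: your index function $\mu(k,j)$ is exactly the paper's $\widetilde\sigma_k(j)$ defined by $\sigma_{\widetilde\sigma_k(j)}=\sigma_k\circ\sigma_j$, your definition of $A_{\sigma_j}$ coincides with the paper's coordinate-selection map, and both the verification of $A_{\sigma_j}\circ\pi_i=\pi_{\sigma_j(i)}$ and the associativity argument for the composition identity are the same computations. No gaps.
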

\begin{proof}
Because $\Sigma$ is a semigroup, we can associate to each map $\sigma_j\in \Sigma$ a unique map
\begin{align}\label{tildesigma}
\widetilde \sigma_j: \{1,\ldots, n\}\to\{1, \ldots, n\}\ \mbox{defined via the formula}\ \sigma_{\widetilde \sigma_j(k)} = \sigma_j \circ \sigma_k\ .
\end{align}
We now define the map $A_{\sigma_j}: V^n\to V^n$ as
\begin{align}\label{Amapdef}
 A_{\sigma_j}(X_1, \ldots, X_{n}) :=(X_{\widetilde \sigma_1(j)}, \ldots, X_{\widetilde \sigma_n(j)})\, .
\end{align}
With this definition it holds that
\begin{align}
(A_{\sigma_j}\circ \pi_i)(x)& =A_{\sigma_j}(x_{\sigma_1(i)}, \ldots, x_{\sigma_n(i)}) = (x_{\sigma_{\widetilde\sigma_1(j)}(i)}, \ldots, x_{\sigma_{\widetilde\sigma_n(j)}(i)}) \nonumber \\ \nonumber  
& = (x_{\sigma_1(\sigma_j(i))}, \ldots, x_{\sigma_n(\sigma_j(i))}) = \pi_{\sigma_j(i)}(x)\, .
\end{align}
Remarking moreover that
$$\sigma_{\widetilde \sigma_{\widetilde \sigma_{k}(j_1)}(j_2)} = \sigma_{\widetilde \sigma_{k}(j_1)}\circ \sigma_{j_2}= \sigma_k\circ\sigma_{j_1}\circ\sigma_{j_2} =  \sigma_k\circ\sigma_{\widetilde \sigma_{j_1}(j_2)}= \sigma_{\widetilde \sigma_k(\widetilde \sigma_{j_1}(j_2))} \, ,$$
and hence that $\widetilde \sigma_{\widetilde \sigma_{k}(j_1)}(j_2) = \widetilde \sigma_k(\widetilde \sigma_{j_1}(j_2))$ for all $1\leq k\leq n$, we also find that
\begin{align}\nonumber
(&A_{\sigma_{j_1}}\circ A_{\sigma_{j_2}}) (X_1, \ldots, X_n) = A_{\sigma_{j_1}}(X_{\widetilde \sigma_{1}(j_2)}, \ldots, X_{\widetilde \sigma_{n}(j_2)})=  (X_{\widetilde \sigma_{\widetilde \sigma_{1}(j_1)}(j_2)}, \ldots, X_{\widetilde \sigma_{\widetilde \sigma_{n}(j_1)}(j_2)})   \\ \nonumber 
&= (X_{\widetilde \sigma_{1}(\widetilde \sigma_{j_1}(j_2))}, \ldots, X_{\widetilde \sigma_{n}(\widetilde \sigma_{j_1}(j_2))}) = A_{\sigma_{\widetilde \sigma_{j_1}(j_2)}}(X_1, \ldots, X_n)= A_{\sigma_{j_1}\circ \sigma_{j_2}}(X_1, \ldots, X_n) \ .
\end{align}
This proves the theorem.
\end{proof}
The identity $A_{\sigma_{j_1}}\circ A_{\sigma_{j_2}}=A_{\sigma_{j_1}\circ\sigma_{j_2}}$ expresses that the $A_{\sigma_j}$ form a representation of the semigroup $\Sigma$. Using this representation we obtain:
\begin{theorem}\label{composition} Let $\Sigma=\{\sigma_1, \ldots, \sigma_n\}$ be a semigroup. Define for $f, g:V^n\to V$ the function
\begin{align}\label{alsocomposition}
f\circ_{\Sigma}g:V^n\to V \ \mbox{by}\  
f\circ_{\Sigma} g:=f\circ ((g\circ A_{\sigma_1})\times\ldots \times (g\circ A_{\sigma_n}))\, .
\end{align}
Then
$$\gamma_f\circ \gamma_g = \gamma_{f\circ_{\Sigma} g}\, .$$
\end{theorem}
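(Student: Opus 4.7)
The plan is to verify the identity componentwise for each cell $1 \leq i \leq N$, chaining together the composition formula (\ref{compositionformula}), Theorem \ref{Amaps}, and the definition (\ref{networkdef}).

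First I would write out $(\gamma_f \circ \gamma_g)_i$ using (\ref{compositionformula}):
\[
(\gamma_f \circ \gamma_g)_i = f \circ \bigl( (g \circ \pi_{\sigma_1(i)}) \times \ldots \times (g \circ \pi_{\sigma_n(i)}) \bigr) \, .
\]
Next, because $\Sigma$ is a semigroup, Theorem \ref{Amaps} applies and gives the intertwining identity $\pi_{\sigma_j(i)} = A_{\sigma_j} \circ \pi_i$ for each $1 \leq j \leq n$. Substituting this into the expression above yields
\[
(\gamma_f \circ \gamma_g)_i = f \circ \bigl( (g \circ A_{\sigma_1} \circ \pi_i) \times \ldots \times (g \circ A_{\sigma_n} \circ \pi_i) \bigr) \, .
\]

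The main remaining step is the elementary observation that if $h_1, \ldots, h_n$ are any maps with a common domain and $\pi$ is a map into that domain, then
\[
(h_1 \circ \pi) \times \ldots \times (h_n \circ \pi) = (h_1 \times \ldots \times h_n) \circ \pi \, .
\]
Applying this with $h_j := g \circ A_{\sigma_j}$ and $\pi := \pi_i$ allows me to pull the common factor $\pi_i$ outside the Cartesian product, giving
\[
(\gamma_f \circ \gamma_g)_i = f \circ \bigl( (g \circ A_{\sigma_1}) \times \ldots \times (g \circ A_{\sigma_n}) \bigr) \circ \pi_i = (f \circ_{\Sigma} g) \circ \pi_i = (\gamma_{f \circ_{\Sigma} g})_i \, ,
\]
where the last two equalities use the definitions (\ref{alsocomposition}) and (\ref{networkdef}) respectively.

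I do not anticipate a real obstacle here: the whole identity is essentially forced once Theorem \ref{Amaps} has been established, since that theorem is precisely the statement that lets one pull $\pi_i$ out as a common factor. The only thing to be careful about is notational: the Cartesian product on the right of (\ref{alsocomposition}) should be read as a map $V^n \to V^n \times \ldots \times V^n = (V^n)^n$ whose $j$-th component slot is filled by $g \circ A_{\sigma_j}$, so that precomposition with $\pi_i : V^N \to V^n$ is well-defined and equals the componentwise precomposition above.
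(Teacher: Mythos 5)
Your argument is correct and is exactly the paper's proof, which simply cites formula (\ref{compositionformula}) together with Theorem \ref{Amaps}; you have merely written out the substitution $\pi_{\sigma_j(i)} = A_{\sigma_j}\circ\pi_i$ and the pulling-out of the common factor $\pi_i$ that the paper leaves implicit. No gaps.
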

\begin{proof}
From formula (\ref{compositionformula}) and Theorem \ref{Amaps}.
\end{proof}
Theorem \ref{composition} reveals once more that if $\Sigma$ is a semigroup, then the composition of two coupled cell network maps $\gamma_f$ and $\gamma_g$ is again a coupled cell network map, namely $\gamma_{f\circ_{\Sigma}g}$. More importantly, it shows how to compute $f\circ_{\Sigma}g$ ``symbolically'', i.e. using only the functions $f$ and $g$ and a representation of the network semigroup.

The final result of this section ensures that the ``symbolic composition'' $\circ_{\Sigma}$ makes the space $C^{\infty}(V^n,V)$ into an associative algebra.
\begin{lemma}\label{associative}
$$(f\circ_{\Sigma} g) \circ_{\Sigma} h = f\circ_{\Sigma} (g \circ_{\Sigma} h) \, .$$
\end{lemma}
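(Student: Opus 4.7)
The plan is to verify the identity by direct symbolic computation, unrolling both sides via the definition
$f\circ_{\Sigma} g = f\circ\bigl((g\circ A_{\sigma_1})\times\cdots\times(g\circ A_{\sigma_n})\bigr)$
and the representation identity $A_{\sigma_{j_1}}\circ A_{\sigma_{j_2}} = A_{\sigma_{j_1}\circ\sigma_{j_2}}$ supplied by Theorem \ref{Amaps}. Evaluating at an arbitrary $X\in V^n$, I will show that on each side the $j$-th input to the outermost $f$ collapses to the same nested expression in $g$, $h$, and the linear maps $A_{\sigma_k}A_{\sigma_j}$.

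First I expand $f\circ_{\Sigma}(g\circ_{\Sigma} h)$: its $j$-th $f$-slot is $(g\circ_{\Sigma} h)(A_{\sigma_j}X)$, which unfolds immediately to
\[
g\bigl(h(A_{\sigma_1}A_{\sigma_j}X),\ldots,h(A_{\sigma_n}A_{\sigma_j}X)\bigr).
\]
For the other side I introduce the shorthand $Y_k := h(A_{\sigma_k}X)$, so that $((f\circ_{\Sigma} g)\circ_{\Sigma} h)(X)=(f\circ_{\Sigma} g)(Y)$ has $j$-th $f$-slot $g(A_{\sigma_j}Y)$. The key computation is the $k$-th coordinate of $A_{\sigma_j}Y$, tracked carefully through the twist map $\widetilde\sigma$ from (\ref{tildesigma}):
\[
(A_{\sigma_j}Y)_k = Y_{\widetilde\sigma_k(j)} = h(A_{\sigma_{\widetilde\sigma_k(j)}}X) = h(A_{\sigma_k\circ\sigma_j}X) = h(A_{\sigma_k}A_{\sigma_j}X),
\]
where the defining relation $\sigma_{\widetilde\sigma_k(j)}=\sigma_k\circ\sigma_j$ is used once and the homomorphism property of Theorem \ref{Amaps} once. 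Substituting back, the $j$-th $f$-slot matches the expression already obtained for $f\circ_{\Sigma}(g\circ_{\Sigma} h)$ term by term, yielding the claim.

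The only genuine obstacle is index bookkeeping: the indices on the auxiliary vector $Y$ get permuted by $\widetilde\sigma$, and one must verify that this permutation produces $A_{\sigma_k\circ\sigma_j}$ rather than the swapped $A_{\sigma_j\circ\sigma_k}$ --- the latter would in general destroy associativity. A tempting shortcut would be to chain Theorem \ref{composition} with associativity of ordinary map composition to obtain $\gamma_{(f\circ_{\Sigma} g)\circ_{\Sigma} h} = \gamma_{f\circ_{\Sigma}(g\circ_{\Sigma} h)}$ in one line; however, deducing equality of the symbolic expressions themselves from this identity would require injectivity of $f\mapsto\gamma_f$, which is not automatic for every $\Sigma$. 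I therefore favor the direct symbolic verification sketched above.
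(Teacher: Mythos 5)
Your proof is correct and follows essentially the same route as the paper: both sides are unrolled via the definition of $\circ_{\Sigma}$, and the key step is the identity $(A_{\sigma_j}Y)_k = Y_{\widetilde\sigma_k(j)} = h(A_{\sigma_{\widetilde\sigma_k(j)}}X) = h(A_{\sigma_k}A_{\sigma_j}X)$, which is exactly the computation in the paper's proof, merely written with the explicit shorthand $Y$. Your closing remark about why one cannot simply invoke Theorem \ref{composition} plus injectivity of $\gamma$ is also well taken, since the paper itself notes that $\gamma$ need not be injective on all of $C^{\infty}(V^n,V)$.
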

\begin{proof}
\begin{align}\nonumber
(f\circ_{\Sigma} g) \circ_{\Sigma} h (X) & = (f\circ_{\Sigma}g)(\ldots, h(A_{\sigma_k}X), \ldots) = f(\ldots, g(A_{\sigma_j}(\ldots, h(A_{\sigma_k}X),\ldots)), \ldots) =\\ \nonumber 
f(\ldots&,g(\ldots, h(A_{\sigma_{\widetilde \sigma_k(j)}}X),\ldots)=
 f(\ldots, g(\ldots, h(A_{\sigma_k\circ \sigma_j}X), \ldots) = \\ \nonumber  f(\ldots, g(\ldots, h(&A_{\sigma_k}A_{\sigma_j}X), \ldots),\ldots)
= 
f(\ldots, (g\circ_{\Sigma} h)(A_{\sigma_j}X), \ldots) =f\circ_{\Sigma} (g \circ_{\Sigma} h ) (X)\, .
\end{align}  
\end{proof}
With Lemma \ref{associative} at hand, Theorem \ref{composition} just means that the linear map
$$\gamma:C^{\infty}(V^n,V) \to C^{\infty}(V^N, V^N)\ \mbox{that sends}\ f \ \mbox{to}\ \gamma_f$$
is a homomorphism of associative algebras.

\begin{example} \label{ex3} Again, let $N=3$ and let $\sigma_1, \sigma_2, \sigma_3$ be defined as in Examples \ref{ex1} and \ref{ex2}. We recall that the composition table of $\{\sigma_1, \sigma_2, \sigma_3\}$ was given in Example \ref{example1}. The rows of this table express that 
\begin{align}\nonumber
\widetilde \sigma_1(1)=1, \widetilde \sigma_1(2)=2, \widetilde \sigma_1(3)=3\, ,\\ \nonumber 
\widetilde \sigma_2(1)=2, \widetilde \sigma_2(2)=3, \widetilde \sigma_2(3)=3\, ,\\ \nonumber
\widetilde \sigma_3(1)=3, \widetilde \sigma_3(2)=3, \widetilde \sigma_3(3)=3\, .
\end{align}
 This implies in particular that
\begin{align}\nonumber
& A_{\sigma_1}(X_1, X_2, X_3) = (X_1, X_2, X_3) \, , \\ \nonumber & A_{\sigma_2}(X_1, X_2, X_3) = (X_2, X_3, X_3) \, , \\ 
& A_{\sigma_3}(X_1, X_2, X_3) = (X_3, X_3, X_3) \, . \nonumber
\end{align}
Substitution in (\ref{alsocomposition}) therefore yields that
$$f\circ_{\Sigma}g\, (X_1,X_2, X_3) = f(g(X_1, X_2, X_3), g(X_2, X_3, X_3), g(X_3, X_3, X_3))\, .$$
We conclude that $f\circ_{\Sigma} g$ equals the function $h$ found in Example \ref{excomposition}.
\end{example}

\begin{remark}\label{tildesigmaremark}
The defining relation
\begin{align} \nonumber
\sigma_{\widetilde \sigma_j(k)} = \sigma_j \circ \sigma_k\ \mbox{for} \ \widetilde \sigma_j:\{1, \ldots, n\}\to \{1, \ldots, n\}\,
\end{align}
expresses that the map $\widetilde \sigma_j$ describes the left-multiplicative behavior of $\sigma_j$. The computation
$$\sigma_{\widetilde{\sigma_{j_1}\circ\sigma_{j_2}}(k)}=\sigma_{j_1}\circ\sigma_{j_2} \circ\sigma_{k}= \sigma_{j_1}\circ\sigma_{\widetilde \sigma_{j_2}(k)}=  \sigma_{\widetilde \sigma_{j_1}(\widetilde \sigma_{j_2}(k))}\, $$
moreover reveals that
$$\widetilde{\ \sigma_{j_1}\circ\sigma_{j_2}\ } = \widetilde \sigma_{j_1}\circ \widetilde \sigma_{j_2}\ \mbox{for all}\ 1\leq j_1, j_2\leq n\, .$$
This means in particular that the collection $\{\widetilde \sigma_1, \ldots, \widetilde \sigma_n\}$ is closed under composition. 
 
The maps $\widetilde \sigma_1, \ldots, \widetilde \sigma_n$ will play an interesting role in this paper. In fact, we will show in Section \ref{coveringsection} that they are themselves the network maps of a certain ``fundamental network'' that fully determines the fate of all network dynamical systems subject to $\Sigma$.
\end{remark}

\begin{remark}
For a map $\sigma:\{1,\ldots, n\}\to\{1, \ldots, m\}$ let us denote by $\lambda_{\sigma}:V^{m}\to V^{n}$ the linear map
$$\lambda_{\sigma}(X_1, \ldots, X_{m}) := (X_{\sigma(1)}, \ldots , X_{\sigma(n)})\ .$$
This means that the matrix of the map $\lambda_{\sigma}$ has precisely one $\mbox{id}_V$ on each row and zeroes elsewhere. 
We will denote the space of such maps by 
$$\Lambda(m, n):=\left\{\lambda_{\sigma}: V^{m}\to V^{n}\, | \, \sigma:\{1,\ldots, n\}\to\{1, \ldots, m\} \right\}\, .$$
One quickly checks that the assignment $\lambda:\sigma\mapsto\lambda_{\sigma}$ is contravariant. More precisely, if $\sigma:\{1,\ldots, n\}\to\{1,\ldots, m\}$ and $\tau:\{1,\ldots, m\}\to\{1,\ldots, l\}$, then 
$$\lambda_{\sigma}\circ \lambda_{\tau}  =  \lambda_{\tau\circ\sigma}\, .$$
In particular, $\Lambda(m, m)$ is a semigroup and the map $\sigma\mapsto \lambda_{\sigma}$ an anti-homomorphism from the semigroup of all maps from $\{1,\ldots, m\}$ to itself to the semigroup $\Lambda(m, m)$.

The maps $\pi_i$ and $A_{\sigma_j}$ defined above are examples of such maps: 
\begin{align}
\pi_i=&\lambda_{\sigma^i}\ \mbox{with} \ \sigma^i:\{1,\ldots, n\}\to \{1,\ldots, N\}\ \mbox{defined as}\ \sigma^i(j):=\sigma_j(i)\, .\nonumber \\ \nonumber
A_{\sigma_j}=&\lambda_{\widetilde \sigma^j}\ \mbox{with}\ \widetilde \sigma^j:\{1,\ldots, n\}\to \{1,\ldots, n\}\ \mbox{defined by}\ \widetilde \sigma^j(k):=\widetilde \sigma_k(j)\, .
\end{align}
This observation can be used give a proof of Theorem \ref{Amaps} that is free of coordinates:

\begin{proofof}\ [{\bf Of Theorem \ref{Amaps} without coordinates}]:
We observe that
$$\sigma^i(\widetilde \sigma^j(k))=\sigma^i(\widetilde \sigma_k(j)) = \sigma_{\widetilde \sigma_k(j)}(i)= (\sigma_k\circ\sigma_j)(i)= \sigma_k(\sigma_j(i)) = \sigma^{\sigma_j(i)}(k)$$
and hence that $\sigma^i\circ\widetilde \sigma^j= \sigma^{\sigma_j(i)}$. Using this, we find that
$$A_{\sigma_j}\circ\pi_i=\lambda_{\widetilde \sigma^j}\circ\lambda_{\sigma^i}=\lambda_{\sigma^i\circ\widetilde \sigma^j} = \lambda_{\sigma^{\sigma_j(i)}} =\pi_{\sigma_j(i)}\, .$$
Similarly, the computation
\begin{align}\nonumber
&\sigma_{(\widetilde \sigma^{j_2}\circ \widetilde \sigma^{j_1})(k)}= \sigma_{\widetilde \sigma^{j_2}(\widetilde \sigma^{j_1}(k))} =  \sigma_{\widetilde \sigma^{j_2}(\widetilde \sigma_k(j_1))} = \sigma_{\widetilde \sigma_{\widetilde \sigma_k(j_1)}(j_2)} \\ \nonumber
=\sigma_{\widetilde \sigma_k(j_1)}\circ&\sigma_{j_2} = \sigma_k\circ\sigma_{j_1}\circ\sigma_{j_2} = \sigma_k\circ \sigma_{\widetilde \sigma_{j_1}(j_2)}  =\sigma_{\widetilde \sigma_k(\widetilde \sigma_{j_1}(j_2))}  = \sigma_{\widetilde \sigma^{\widetilde \sigma_{j_1}(j_2)}(k)}
\end{align}
reveals that
$$\widetilde \sigma^{j_2} \circ \widetilde \sigma^{j_1} = \widetilde \sigma^{\widetilde \sigma_{j_1}(j_2)}\, .$$
As a consequence,
$$A_{\sigma_{j_1}}\circ A_{\sigma_{j_2}} = \lambda_{\widetilde \sigma^{j_1}}\circ  \lambda_{\widetilde \sigma^{j_2}} =  \lambda_{\widetilde \sigma^{j_2} \circ \widetilde \sigma^{j_1}}=  \lambda_{\widetilde \sigma^{\widetilde \sigma_{j_1}(j_2)}} = A_{\sigma_{\widetilde \sigma_{j_1}(j_2)}}=A_{\sigma_{j_1}\circ \sigma_{j_2}}\, .$$
\end{proofof}
\noindent Unfortunately, this coordinate free proof of Theorem \ref{Amaps} is relatively long.
\end{remark}
\section{A coupled cell network bracket}\label{bracketsection}
We will now think of $\gamma_f:V^N\to V^N$ as a vector field that generates the differential equation $$\dot x = \gamma_f(x)\ .$$
We suggestively denote by $e^{t\gamma_f}$ the time-$t$ flow of the vector field $\gamma_{f}$ on $V^N$ and by $(e^{t\gamma_g})_*\gamma_f $ the pushforward of the vector field $\gamma_f$ under the time-$t$ flow of $\gamma_g$.
We recall that the Lie bracket of $\gamma_f$ and $\gamma_g$ is then the vector field $[\gamma_f, \gamma_g]:V^N\to V^N$ defined as 
\begin{align}\label{defnormalbracket}
[\gamma_f, \gamma_g](x):= \left.\frac{d}{dt}\right|_{t=0}\!\!\!  (e^{t\gamma_f})_*\gamma_g = D\gamma_f(x) \cdot \gamma_g(x) - D\gamma_g(x)\cdot \gamma_f(x)\ .
 \end{align}
  The main result of this section is that if $\Sigma$ is a semigroup, then the collection of coupled cell network vector fields is closed under taking Lie brackets. 

\begin{theorem}\label{brackettheorem}
Let $\Sigma=\{\sigma_1, \ldots, \sigma_n\}$ be a semigroup and let the $A_{\sigma_j}:V^n\to V^n$ be as in Theorem \ref{Amaps}. Define, for $f, g:V^n\to V$, the function $[f,g]_{\Sigma}:V^n\to V$ by
\begin{align}\label{bracketdef}
[f,g]_{\Sigma}:= \sum_{j=1}^n D_jf \cdot (g \circ A_{\sigma_j}) -  D_jg \cdot (f\circ A_{\sigma_j}) \ .
\end{align}
Then
\begin{align}\label{bracketprop}
[\gamma_f, \gamma_g]=\gamma_{[f,g]_{\Sigma}}\ .
\end{align}
\end{theorem}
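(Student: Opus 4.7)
The plan is to verify the identity componentwise, using the compact notation $(\gamma_f)_i = f \circ \pi_i$ from Section \ref{compositionsection} together with the semigroup-representation property $A_{\sigma_j}\circ\pi_i = \pi_{\sigma_j(i)}$ from Theorem \ref{Amaps}. Since the Lie bracket on $V^N$ is componentwise, it suffices to show that the $i$-th component of $[\gamma_f,\gamma_g]$ equals $[f,g]_\Sigma \circ \pi_i$ for every $1 \le i \le N$.

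First I would compute $D(\gamma_f)_i(x)\cdot \gamma_g(x)$. Because $\pi_i : V^N\to V^n$ is linear, the chain rule gives $D(\gamma_f)_i(x) = Df(\pi_i(x))\circ \pi_i$, so
\begin{align*}
D(\gamma_f)_i(x)\cdot \gamma_g(x) = Df(\pi_i(x))\cdot \pi_i(\gamma_g(x)).
\end{align*}
Now $\pi_i(\gamma_g(x))$ is by definition the $n$-tuple $((\gamma_g)_{\sigma_1(i)}(x),\ldots,(\gamma_g)_{\sigma_n(i)}(x)) = (g(\pi_{\sigma_1(i)}(x)),\ldots, g(\pi_{\sigma_n(i)}(x)))$. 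Invoking Theorem \ref{Amaps} to rewrite $\pi_{\sigma_j(i)} = A_{\sigma_j}\circ\pi_i$ turns this into $(g(A_{\sigma_1}\pi_i(x)),\ldots,g(A_{\sigma_n}\pi_i(x)))$. Expanding $Df$ as a sum over its $n$ partial derivatives yields
\begin{align*}
D(\gamma_f)_i(x)\cdot \gamma_g(x) \;=\; \sum_{j=1}^n D_j f(\pi_i(x)) \cdot g(A_{\sigma_j}\pi_i(x)) \;=\; \Bigl(\sum_{j=1}^n D_j f \cdot (g\circ A_{\sigma_j})\Bigr)\!\circ\pi_i\,(x).
\end{align*}

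Swapping the roles of $f$ and $g$ gives the analogous formula for $D(\gamma_g)_i(x)\cdot\gamma_f(x)$. Subtracting and using the definition \eqref{defnormalbracket} of the Lie bracket, I obtain
\begin{align*}
[\gamma_f,\gamma_g]_i(x) \;=\; \Bigl(\sum_{j=1}^n D_j f\cdot(g\circ A_{\sigma_j}) \;-\; D_j g\cdot(f\circ A_{\sigma_j})\Bigr)\!\circ\pi_i\,(x) \;=\; [f,g]_\Sigma \circ\pi_i\,(x),
\end{align*}
which by \eqref{networkdef} is exactly $(\gamma_{[f,g]_\Sigma})_i(x)$, establishing \eqref{bracketprop}.

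The calculation is essentially mechanical once one has Theorem \ref{Amaps}; the only conceptual step is recognizing that the substitution $\pi_i\circ\gamma_g = (g\circ A_{\sigma_1},\ldots,g\circ A_{\sigma_n})\circ\pi_i$, which is what carries the semigroup structure into the bracket and allows the common factor $\pi_i$ to be pulled out on the right. Thus the main obstacle is not an obstacle at all but rather a bookkeeping point: one must be careful that $D_j f$ is evaluated at $\pi_i(x)$, not at $A_{\sigma_j}\pi_i(x)$, so that after factoring one genuinely obtains a single function on $V^n$ composed with $\pi_i$.
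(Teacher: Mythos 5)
Your proof is correct and follows essentially the same route as the paper: both reduce the claim to the componentwise identity $(D\gamma_f\cdot\gamma_g)_i=\bigl(\sum_j D_jf\cdot(g\circ A_{\sigma_j})\bigr)\circ\pi_i$ via the relation $A_{\sigma_j}\circ\pi_i=\pi_{\sigma_j(i)}$ from Theorem \ref{Amaps}, the only cosmetic difference being that the paper obtains this by differentiating $\gamma_f(x+t\gamma_g(x))_i$ at $t=0$ whereas you apply the chain rule to $f\circ\pi_i$ directly using the linearity of $\pi_i$.
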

\begin{proof}
We start by remarking that 
\begin{align}
\gamma_f&(x+t\gamma_g(x))_i =  f(\ldots, x_{\sigma_j(i)} + t\gamma_g(x)_{\sigma_j(i)}, \ldots) =\nonumber \\\nonumber &f(\ldots, x_{\sigma_j(i)} + tg(x_{\sigma_1(\sigma_j(i))}, \ldots, x_{\sigma_n(\sigma_j(i))}), \ldots)
=  \\ \nonumber
&f(\ldots ,  x_{\sigma_j(i)} + tg(A_{\sigma_j}(x_{\sigma_1(i)}, \ldots, x_{\sigma_n(i)})), \ldots) \, .
\end{align}
Differentiating this identity with respect to $t$ and evaluating the result at $t=0$ gives that
\begin{align}\nonumber
 (D\gamma_f(x)\cdot\gamma_g(x))_i =  \sum_{j=1}^n D_jf(x_{\sigma_1(i)}, \ldots, x_{\sigma_n(i)})  g(A_{\sigma_j}(x_{\sigma_1(i)}, \ldots, x_{\sigma_n(i)})) \ . \nonumber
\end{align}
This proves that $(D\gamma_{f}\cdot \gamma_g)_i= \left( \sum_{j=1}^n D_jf \cdot (g \circ A_{\sigma_j})\right)\circ \pi_i$, and hence that $D\gamma_f\cdot \gamma_g$ is a coupled cell network vector field.
With a similar computation for $D\gamma_g\cdot \gamma_f$, we thus find that the Lie bracket between $\gamma_f$ and $\gamma_g$ is given by
$$[\gamma_f, \gamma_g]_i = (D\gamma_{f}\cdot \gamma_g)_i - (D\gamma_{g}\cdot \gamma_f)_i = \sum_{j=1}^n \left[ D_jf\cdot (g\circ A_{\sigma_j}) -  D_jg\cdot (f \circ A_{\sigma_j})\right]\circ \pi_i = [f,g]_{\Sigma}\circ \pi_i\, .$$
This proves the theorem.
\end{proof}
Lemma \ref{bracketlemma} below states that the ``symbolic bracket''  $[\cdot, \cdot]_{\Sigma}$ is a Lie bracket.
\begin{lemma} \label{bracketlemma}
The bracket $[\cdot, \cdot]_{\Sigma}$ makes $C^{\infty}(V^n, V)$ a Lie algebra. Moreover, the linear map 
$$\gamma:C^{\infty}(V^n,V) \to C^{\infty}(V^N, V^N) \ \mbox{that sends}\ f \ \mbox{to} \ \gamma_f\ \mbox{is a Lie algebra homomorphism}.$$
\end{lemma}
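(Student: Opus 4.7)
Bilinearity of $[\cdot,\cdot]_\Sigma$ is immediate from (\ref{bracketdef}), since each $D_j$ is linear, the $A_{\sigma_j}$ are fixed linear maps, and pointwise products of functions are bilinear. Antisymmetry is manifest because interchanging $f$ and $g$ negates the formula. The second assertion of the lemma, that $\gamma$ is a Lie algebra homomorphism, is essentially a restatement of Theorem \ref{brackettheorem} combined with the obvious linearity of $\gamma$. The substantive remaining task is therefore to verify the Jacobi identity for $[\cdot,\cdot]_\Sigma$.

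My plan is to transport the Jacobi identity from the ordinary vector-field Lie bracket on $V^N$. Setting
\[
J(f,g,h) := [[f,g]_\Sigma,h]_\Sigma + [[g,h]_\Sigma,f]_\Sigma + [[h,f]_\Sigma,g]_\Sigma
\]
and using that $\gamma_{[p,q]_\Sigma} = [\gamma_p,\gamma_q]$ for all $p,q$ by Theorem \ref{brackettheorem}, I obtain $\gamma_{J(f,g,h)} = [[\gamma_f,\gamma_g],\gamma_h] + [[\gamma_g,\gamma_h],\gamma_f] + [[\gamma_h,\gamma_f],\gamma_g] = 0$, the last equality being Jacobi for vector fields on $V^N$. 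Were $\gamma$ injective, $J(f,g,h)=0$ would follow at once, and both claims of the lemma would be proved.

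The hard part is that $\gamma$ need not be injective: any $f:V^n \to V$ vanishing on the (generally proper) subset $\bigcup_i \mathrm{Im}(\pi_i) \subseteq V^n$ lies in $\ker\gamma$. To sidestep this, I would exploit that the formula (\ref{bracketdef}) depends only on the abstract multiplication of $\Sigma$, through the maps $\widetilde\sigma_j$ and hence $A_{\sigma_j}$, and not on the action of $\Sigma$ on the particular cell set $\{1,\ldots,N\}$. It therefore suffices to exhibit \emph{some} realization of $\Sigma$ in which $\gamma$ is injective, and to verify Jacobi there. A convenient such realization is the augmented left Cayley action on $T := \Sigma \cup \{\ast\}$, defined by $\sigma_j(\sigma_k) := \sigma_j \circ \sigma_k$ and $\sigma_j(\ast) := \sigma_j$: one checks that the $\sigma_j$ remain distinct on $T$ and still satisfy the original composition rules, and at the new cell $\ast$ the values $\sigma_1(\ast),\ldots,\sigma_n(\ast)$ are the $n$ distinct elements of $\Sigma$, so the corresponding $\pi_\ast$ is surjective onto $V^n$ and $\gamma$ is injective. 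A direct combinatorial verification of Jacobi straight from (\ref{bracketdef}), using commutativity of mixed partials, the chain rule $D_k(g \circ A_{\sigma_j}) = \sum_{\ell : \widetilde\sigma_\ell(j) = k}(D_\ell g) \circ A_{\sigma_j}$, and the representation property $A_{\sigma_{j_1}} A_{\sigma_{j_2}} = A_{\sigma_{j_1} \sigma_{j_2}}$, is also possible but considerably more tedious.
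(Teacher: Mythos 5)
Your proposal is correct, but it takes a genuinely different route from the paper. The paper proves the Jacobi identity by brute force: it establishes the chain-rule identity $\sum_k D_k(g\circ A_{\sigma_j})\cdot(h\circ A_{\sigma_k})=\sum_k (D_kg\circ A_{\sigma_j})\cdot(h\circ A_{\sigma_k}\circ A_{\sigma_j})$, expands $[f,[g,h]_{\Sigma}]_{\Sigma}$ in full, and observes that the cyclic sum cancels using the symmetry of second derivatives --- exactly the ``considerably more tedious'' direct verification you mention at the end. You instead transport Jacobi from the vector-field bracket on phase space, and you correctly identify the one genuine obstacle, namely that $\gamma$ need not be injective (so the naive pullback only gives $J(f,g,h)\in\ker\gamma$, which would prove the identity merely on $\bigcup_i{\rm im}\,\pi_i$ rather than on all of $V^n$). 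Your fix is sound: the bracket formula (\ref{bracketdef}) depends only on the multiplication table of $\Sigma$ through the maps $\widetilde\sigma_j$ and $A_{\sigma_j}$, so you are free to re-realize the same abstract semigroup on $T=\Sigma\cup\{\ast\}$ via the augmented left Cayley action; there the maps stay distinct, the composition table is unchanged, $\pi_\ast$ is surjective onto $V^n$ (the indices $\sigma_1(\ast),\ldots,\sigma_n(\ast)$ are the $n$ distinct elements of $\Sigma$), hence $\gamma$ is injective and Jacobi descends on the nose. What each approach buys: the paper's computation is self-contained and produces the intermediate chain-rule identity that is reused implicitly elsewhere, while your argument is shorter, conceptually cleaner, and makes transparent \emph{why} Jacobi holds (the symbolic bracket is the pullback of a genuine vector-field bracket under a faithful realization). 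Your construction is also closely related to the paper's own Section \ref{coveringsection}: the fundamental network $\widetilde\Sigma$ is the left Cayley realization, and Remark \ref{slavesremark} wrestles with precisely the faithfulness issue that your adjoined cell $\ast$ resolves automatically. One small presentational point: you should state explicitly that Theorem \ref{brackettheorem} applies verbatim to the realization on $V^T$ (with $N$ replaced by $n+1$) and yields the \emph{same} symbolic bracket, since that identification is the hinge of the whole argument.
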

 \begin{proof} 
Anti-symmetry of $[\cdot, \cdot]_{\Sigma}$ is clear from formula (\ref{bracketdef}). The Jacobi identity 
$$ [f, [g,h]_{\Sigma}]_{\Sigma} + [g, [h,f]_{\Sigma}]_{\Sigma} + [h, [f,g]_{\Sigma}]_{\Sigma} = 0$$
follows from a somewhat lengthy computation as follows. First of all, because 
$$A_{\sigma_j}(\ldots, h(A_{\sigma_k}X), \ldots) = (\ldots, h(A_{\sigma_{\widetilde \sigma_k(j)}}X),\ldots) = (\ldots, h(A_{\sigma_k} A_{\sigma_j}X), \ldots)\, ,$$ 
we find that
\begin{align}\nonumber
(g\circ A_{\sigma_j})(X + t(&\ldots, h(A_{\sigma_k}X),\ldots)) = g(A_{\sigma_j}X + tA_{\sigma_j}(\ldots, h(A_{\sigma_k}X),\ldots)) \\ \nonumber  
 & = g(A_{\sigma_j}X +  t(\ldots, h(A_{\sigma_k}A_{\sigma_j}X), \ldots)) \, .
 \end{align}
Differentiating this identity with respect to $t$ and evaluating the result at $t=0$ yields that
$$ \sum_{k=1}^n D_k(g\circ A_{\sigma_j}) \cdot (h\circ A_{\sigma_k})= \sum_{k=1}^n (D_kg\circ A_{\sigma_j} ) \cdot (h\circ A_{\sigma_k}\circ A_{\sigma_j})\, .$$
With this in mind, we now compute that
\begin{align}\nonumber  [f, [g,h]_{\Sigma}]_{\Sigma} = \left.\sum\right._{j=1}^n & D_jf\cdot ([g,h]_{\Sigma}\circ A_{\sigma_j}) - D_j[g,h]_{\Sigma} \cdot (f\circ A_{\sigma_j})  
\\
\nonumber  = \left. \sum \right._{j,k=1}^n\  D_jf & \cdot (D_kg\circ A_{\sigma_j})\cdot(h\circ A_{\sigma_k}\circ A_{\sigma_j}) - D_jf\cdot (D_kh\circ A_{\sigma_j})\cdot(g\circ A_{\sigma_k}\circ A_{\sigma_j})  \\
\nonumber
 - D_kg & \cdot D_j(h\circ A_{\sigma_k})\cdot(f\circ A_{\sigma_j}) + D_kh\cdot D_j(g\circ A_{\sigma_k})\cdot(f\circ A_{\sigma_j})  \\ \nonumber
 - D^2_{j,k} g & \cdot (  h\circ A_{\sigma_k}, f\circ A_{\sigma_j}) + D^2_{j,k} h \cdot ( g\circ A_{\sigma_k}, f\circ A_{\sigma_j})  \\ \nonumber
  =\left. \sum \right._{j, k=1}^n \ D_kf & \cdot D_j(g\circ A_{\sigma_k})\cdot(h\circ A_{\sigma_j}) - D_kf\cdot D_j(h\circ A_{\sigma_k})\cdot(g\circ A_{\sigma_j})   \\
\nonumber
  - D_kg & \cdot D_j(h\circ A_{\sigma_k})\cdot(f\circ A_{\sigma_j}) + D_kh\cdot D_j(g\circ A_{\sigma_k})\cdot(f\circ A_{\sigma_j})  \\ \nonumber
 - D^2_{j,k} g & \cdot ( h\circ A_{\sigma_k}, f\circ A_{\sigma_j}) + D^2_{j,k} h \cdot ( g\circ A_{\sigma_k}, f\circ A_{\sigma_j}) \, .
\end{align} 
Using the symmetry of the second derivatives, the Jacobi identity follows from cyclically permuting $f, g$ and $h$ in the above expression and summing the results. This proves that $C^{\infty}(V^n,V)$ is a Lie algebra. 
Theorem \ref{brackettheorem} means that $\gamma$ is a Lie algebra homomorphism.
 \end{proof}
\begin{example}\label{exzoveel}
Again, let $N=3$ and let $\sigma_1, \sigma_2, \sigma_3$ be defined as in Examples \ref{ex1} and \ref{ex2}. We recall that $A_{\sigma_1}, A_{\sigma_2}$ and $A_{\sigma_3}$ were computed in Example \ref{ex3}. It follows that
\begin{align}\nonumber
[f,g]_{\Sigma}(X)&=D_1f(X_1, X_2, X_3)\cdot g(X_1, X_2, X_3)+D_2f(X_1, X_2, X_3)\cdot g(X_2, X_3, X_3)  \\ \nonumber & + D_3f(X_1, X_2, X_3)\cdot g(X_3, X_3, X_3) - D_1g(X_1, X_2, X_3)\cdot f(X_1, X_2, X_3) \\ \nonumber
& -  D_2g(X_1, X_2, X_3)\cdot f(X_2, X_3, X_3) - D_3g(X_1, X_2, X_3)\cdot f(X_3, X_3, X_3)  \, .
\end{align}
\end{example}

\section{Coupled cell network normal forms}\label{normalformsection}
Normal forms are an essential tool in the study of the dynamics and bifurcations of maps and vector fields near equilibria, cf. \cite{murdock}, \cite{sanvermur}.
In this section we will show that it can be arranged that the normal form of a semigroup coupled cell network is a coupled cell network as well. This normal form can moreover be computed ``symbolically'', i.e. at the level of the function $f$. With Theorem \ref{brackettheorem} at hand, this result is perhaps to be expected. We nevertheless state two illustrative theorems in this section, and sketch their proofs.

We start by making a few standard definitions. First of all, we define for $f\in C^{\infty}(V^n, V)$ the operator ${\rm ad}^{\Sigma}_f:C^{\infty}(V^n, V)\to C^{\infty}(V^n,V)$
by
$${\rm ad}^{\Sigma}_f(g):=[f,g]_{\Sigma}\ .$$
Next, we define for every $k=0,1,2, \ldots$ the finite dimensional subspace  
 \begin{align}
 P^k&:=\{f:V^n \to V \ \mbox{homogeneous polynomial of degree}\ k+1\, \} \subset C^{\infty}(V^n,V) \, .  
 \end{align}
\noindent 
One can observe that $P^0=L(V^N,V)$ and that if $f\in P^k$ and $g\in P^l$, then $[f,g]_{\Sigma}\in P^{k+l}$, as is obvious from formula (\ref{bracketdef}). In particular, we have that
$$\mbox{if} \ f_0\in L(V^n,V)\ \mbox{then}\ {\rm ad}_{f_0}^{\Sigma}: P^k\to P^{k}\, .$$
With this in mind, we formulate the first main result of this section. It essentially states that one may restrict the study of semigroup coupled cell networks near local equilibria to semigroup coupled cell networks of a very specific ``normal form''.

\begin{theorem}[Coupled cell network normal form theorem] \label{normalformtheorem}
Let $\Sigma=\{\sigma_1, \ldots, \sigma_n\}$ be a semigroup, $f\in C^{\infty}(V^n, V)$ and assume that $f(0)=0$. We Taylor expand $f$ as 
$$f= f_0+f_1+ f_2+ \ldots  \ \mbox{with}\ f_k \in P^k\, .$$ 
Let $1\leq r<\infty$ and for every $1\leq k \leq r$, let $N^k\subset P^k$ be a subspace such that 
$$N^k \oplus \left. {\rm im \ ad}^{\Sigma}_{f_0} \right|_{P^k} = P^k\, .$$
Then there exists an analytic diffeomorphism $\Phi$, sending an open neighborhood of $0$ in $V^N$ to an open neighborhood of $0$ in $V^N$, that conjugates the coupled cell network vector field $\gamma_{f}$ to a coupled cell network vector field $\gamma_{\overline f}$ with
$$ \overline f = f_0+ \overline f_1+\overline f_2 + \ldots \ \mbox{and}\ \overline f_k \in N^k  \ \mbox{for all} \ 1\leq k\leq r\, .$$
\end{theorem}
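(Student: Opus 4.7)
The plan is to adapt the standard homological-equations inductive normal form procedure to the symbolic setting on $C^{\infty}(V^n, V)$ equipped with the Lie bracket $[\cdot, \cdot]_{\Sigma}$ of Theorem \ref{brackettheorem}. I will construct $\Phi$ as a composition $\Phi = \Phi_r \circ \cdots \circ \Phi_1$ of near-identity time-one flows, where each $\Phi_k$ is the time-one flow of a polynomial network vector field $\gamma_{g_k}$ with $g_k \in P^k$ chosen so as to normalize the $P^k$-component of $f$. Because $\gamma$ is a Lie algebra homomorphism (Lemma \ref{bracketlemma}), conjugating $\gamma_f$ by such a flow has the effect on $f$ described by the Lie series $f \mapsto e^{{\rm ad}^{\Sigma}_{g_k}} f$ from the introduction, so the entire computation can be carried out on the symbolic side.

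The essential observation is that the bracket respects the polynomial grading: $[P^k, P^l]_{\Sigma} \subseteq P^{k+l}$, as is immediate from (\ref{bracketdef}) because differentiation drops polynomial degree by one. Hence for $g_k \in P^k$ with $k \geq 1$ the operator ${\rm ad}^{\Sigma}_{g_k}$ raises degree by $k$, and the Lie series expands as
\begin{align*}
e^{{\rm ad}^{\Sigma}_{g_k}} f \ = \ f_0 + f_1 + \cdots + f_{k-1} \ + \ \bigl(f_k - {\rm ad}^{\Sigma}_{f_0}(g_k)\bigr) \ + \ (\mbox{higher order terms})\, ,
\end{align*}
using that ${\rm ad}^{\Sigma}_{g_k}(f_0) = -{\rm ad}^{\Sigma}_{f_0}(g_k)$. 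Thus $f_0, f_1, \ldots, f_{k-1}$ are left unchanged and the $P^k$-component is modified by a freely chosen element of ${\rm im}\,{\rm ad}^{\Sigma}_{f_0}|_{P^k}$.

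The inductive step is now standard: assuming $f_1, \ldots, f_{k-1}$ already lie in $N^1, \ldots, N^{k-1}$, the hypothesis $P^k = N^k \oplus {\rm im}\,{\rm ad}^{\Sigma}_{f_0}|_{P^k}$ lets us decompose $f_k = \overline f_k + {\rm ad}^{\Sigma}_{f_0}(g_k)$ with $\overline f_k \in N^k$ and $g_k \in P^k$. Applying $e^{{\rm ad}^{\Sigma}_{g_k}}$ then replaces the $P^k$-part of $f$ by $\overline f_k$ while leaving the lower-degree parts untouched. Iterating from $k = 1$ up to $k = r$ and composing the resulting near-identity diffeomorphisms yields the claimed $\Phi$ and $\overline f$.

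The step that I expect to require the most care is the passage from formal symbolic computation to an honest coordinate change on $V^N$: one must verify that each $\Phi_k := e^{\gamma_{g_k}}$ is genuinely an analytic diffeomorphism of a neighborhood of $0 \in V^N$ and that its geometric pushforward of a smooth network vector field is again a smooth network vector field $\gamma_{\widetilde f}$. The former is immediate since $\gamma_{g_k}$ is a polynomial vector field vanishing to order $\geq 2$ at the origin. The latter follows from the Lie algebra homomorphism property of $\gamma$ combined with the fact that the admissibility condition defining the image of $\gamma$ is preserved infinitesimally by $[\gamma_{g_k}, \cdot\,]$, and hence under the flow it generates. Composing finitely many such $\Phi_k$ and restricting to a sufficiently small neighborhood of $0$ completes the construction.
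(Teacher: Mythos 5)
Your proposal is correct and follows essentially the same route as the paper's own proof: a composition of time-one flows $e^{\gamma_{g_k}}$ with $g_k\in P^k$, the identification $(e^{\gamma_{g_k}})_*\gamma_f=\gamma_{e^{{\rm ad}^{\Sigma}_{g_k}}(f)}$ via the homomorphism property of $\gamma$, the grading $[P^k,P^l]_{\Sigma}\subset P^{k+l}$, and the homological equation $f_k=\overline f_k+{\rm ad}^{\Sigma}_{f_0}(g_k)$ solved using the direct-sum hypothesis. Your closing remark about the pushforward of a network vector field remaining a network vector field is exactly the point the paper isolates as the "main point" of its sketch, resolved there by integrating the linear equation $\frac{d}{dt}(e^{t\gamma_g})_*\gamma_f={\rm ad}_{\gamma_g}((e^{t\gamma_g})_*\gamma_f)$ and invoking Theorem \ref{brackettheorem}.
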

\begin{proof}{\bf [Sketch] } We only sketch a proof without estimates here, because the construction of a normal form by means of ``Lie transformations'' is very well-known.

For $g\in C^{\infty}(V^n,V)$ with $g(0)=0$, the time-$t$ flow $e^{t\gamma_g}$ defines a diffeomorphism of some open neighborhood of $0$ in $V^N$ to another open neighborhood of $0$ in $V^N$. Thus we can consider, for any $f \in C^{\infty}(V^n,V)$, the curve $t \mapsto (e^{t\gamma_{g}})_*\gamma_f \in C^{\infty}(V^N,V^N)$ of pushforward vector fields. This curve satisfies the linear differential equation 
\begin{align}\label{nfdiff}
\frac{d}{dt} (e^{t\gamma_g})_*\gamma_f =  \left. \frac{d}{dh}\right|_{h=0} \!\!\!\!\! (e^{h\gamma_g})_* ((e^{t\gamma_g})_*\gamma_f) = [\gamma_g, (e^{t\gamma_g})_*\gamma_f ] = {\rm ad}_{\gamma_g}((e^{t\gamma_g})_*\gamma_f )\ ,
\end{align}
where the second equality holds by definition of the Lie bracket of vector fields (\ref{defnormalbracket}) and we have used the conventional definition of ${\rm ad}_{\gamma_g}:C^{\infty}(V^N, V^N)\to C^{\infty}(V^N, V^N)$ as
$${\rm ad}_{\gamma_g}(\gamma_f) := [\gamma_g, \gamma_f]\ . $$
Solving the linear differential equation (\ref{nfdiff}) together with the initial condition $(e^{0\gamma_g})_*\gamma_f=\gamma_f$, we find that  the time-$1$ flow of $\gamma_g$ transforms $\gamma_f$ into
$$(e^{\gamma_g})_*\gamma_f = e^{{\rm ad}_{\gamma_g}}(\gamma_f) = \gamma_f + [\gamma_g, \gamma_f]+\frac{1}{2}[\gamma_g, [\gamma_g, \gamma_f]] + \ldots \ .
$$
The main point of this proof is that by Theorem \ref{brackettheorem} the latter expression is also equal to
$$\gamma_{f + [g,f]_{\Sigma}+ \frac{1}{2}[g, [g, f]_{\Sigma}]_{\Sigma} + \ldots} \!=\! \gamma_{e^{{\rm ad}^{\Sigma}_g}(f)}.$$
The diffeomorphism $\Phi$ in the statement of the theorem is now constructed as the composition of a sequence of time-$1$ flows $e^{\gamma_{g_k}}$ $(1 \leq k \leq r)$ of coupled cell network vector fields $\gamma_{g_{k}}$ with $g_k\in  P^k$. We first take $g_1 \in P^1$, so that $\gamma_f$ is transformed by $e^{\gamma_{g_1}}$ into 
$$(e^{\gamma_{g_1}})_*\gamma_f=\gamma_{e^{{\rm ad}_{g_1}}(f)} =\gamma_{f_0 + f_1^1+ f_2^1 + \ldots}$$
in which 
\begin{align}\nonumber
\begin{array}{ll} f^1_1 = f_1+[g_1, f_0]_{\Sigma} & \in P^1 \\
f_2^1 =f_2 + [g_1, f_1]_{\Sigma} + \frac{1}{2}[g_1, [g_1, f_0]_{\Sigma}]_{\Sigma} & \in P^2\\
f_3^1=f_3+\ldots & \in P^3\\
\mbox{etc.}& \ 
\end{array}
\end{align}
It is the fact that $N^1 \oplus \left. {\rm im \ ad}^{\Sigma}_{f_0}\right|_{P^1} =P^1$ that allows us to choose a (not necessarily unique) $g_1\in P^1$ in such a way that 
$$f_1^1=  f_1 + [g_1, f_0]_{\Sigma} = f_1 -{\rm ad}^{\Sigma}_{f_0}(g_1)  \in N^1\ .$$
We proceed by choosing $g_2 \in P^2$ in such a way that $(e^{\gamma_{g_2}} \circ e^{\gamma_{g_1}})_*\gamma_f= (e^{\gamma_{g_2}})_*(( e^{\gamma_{g_1}})_*\gamma_f) =$ $\gamma_{f_0+ f_1^1 + f_2^2 + \ldots }$ with $f_2^2\in N^2$. Continuing in this way, after $r$ steps we obtain that 
$$\Phi:=  e^{\gamma_{g_r}} \circ \ldots \circ e^{\gamma_{g_1}}$$
transforms $\gamma_f$ into $\Phi_*\gamma_f=\gamma_{\overline f}=\gamma_{f_0 + \overline f_1+\ldots}$ where $\overline f_k=f_k^k\in N^k$ for all $1\leq k\leq r$. 

Being the composition of finitely many flows of polynomial coupled cell network vector fields, $\Phi$ is obviously analytic.
\end{proof}
In applications, one is often interested in the bifurcations that occur in the dynamics of a map or differential equation under the variation of external parameters. In the case of coupled cell networks, we may for example assume that $f\in C^{\infty}(V^n\times\R^p,V)$ and let
 $$f^{\lambda}(X):=f(X;\lambda)$$
define a smooth parameter family in $C^{\infty}(V^n,V)$. Correspondingly,  the coupled cell networks $\gamma_{f^{\lambda}}$ form a smooth parameter family in $C^{\infty}(V^N,V^N)$. 

To formulate an appropriate normal form theorem for parameter families of coupled cell networks, we define for $k\geq -1$ and $l\geq 0$,
$$P^{k,l}:=\{f:V^n\times\R^p\to V \ \mbox{homogeneous polynomial of degree} \ k+1\ \mbox{in}\ X\ \mbox{and degree}\ l \ \mbox{in}\ \lambda\}\, .$$ 
We observe that $$[P^{k,l}, P^{K,L}]_{\Sigma}\subset P^{k+K, l+L}\, ,$$
which leads to the following

\begin{theorem}[Coupled cell network normal form theorem with parameters]
 \label{normalformtheoremparameters}
Let $\Sigma=\{\sigma_1, \ldots, \sigma_n\}$ be a semigroup, $f\in C^{\infty}(V^n\times\R^p, V)$ and $f(0;0)=0$. We Taylor expand 
$$f= (f_{-1, 1}+ f_{-1,2} + \ldots ) + (f_{0,0} + f_{0,1}+ f_{0,2} + \ldots) + (f_{1,0} + f_{1,1} + f_{1,2}+\ldots) + \ldots$$ 
with $f_{k,l}\in P^{k,l}$.

Let $1\leq r_1, r_2<\infty$ and for every $-1\leq k \leq r_1$ and $0\leq l\leq r_2$, let $N^{k,l}\subset P^{k,l}$ be a subspace such that 
$$N^{k,l} \oplus \left. {\rm im \ ad}^{\Sigma}_{f_{0,0}} \right|_{P^{k,l}} = P^{k,l}\, .$$
Then there exists a polynomial family $\Phi^{\lambda}$ of analytic diffeomorphisms, defined for $\lambda$ in an open neighborhood of $0$ in $\R^p$ and each sending an open neighborhood of $0$ in $V^N$ to an open neighborhood of $0$ in $V^N$, with the property that $\Phi^{\lambda}$ conjugates $\gamma_{f^{\lambda}}$ to $\gamma_{\overline f^{\lambda}}$, where
$$ \overline f =(\overline f_{-1, 1}+ \overline f_{-1,2} + \ldots ) + (f_{0,0} + \overline f_{0,1}+ \overline f_{0,2} + \ldots) + (\overline f_{1,0} + \overline f_{1,1} + \overline f_{1,2}+\ldots) + \ldots $$
and
$$\overline f_{k,l} \in N^{k,l}  \ \mbox{for all} \ -1\leq k\leq r_1 \ \mbox{and}\ 0\leq l\leq r_2\, .$$
\end{theorem}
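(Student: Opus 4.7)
The plan is to follow the proof of Theorem 6.1 almost verbatim, with the single grading $\{P^k\}$ replaced by the bi-grading $\{P^{k,l}\}$ and with $\lambda$ treated as a passive parameter. Theorem 4.1 applies fiberwise for each fixed $\lambda$, so the Lie transformation identity
\[
(e^{\gamma_{g^\lambda}})_*\gamma_{f^\lambda}=\gamma_{e^{\mathrm{ad}^\Sigma_{g^\lambda}}(f^\lambda)}
\]
still holds as an identity of $\lambda$-families. The grading fact $[P^{k,l},P^{K,L}]_\Sigma\subset P^{k+K,l+L}$ noted just above the theorem substitutes for the scalar grading, and $\mathrm{ad}^\Sigma_{f_{0,0}}$ preserves every $P^{k,l}$ because $f_{0,0}\in P^{0,0}$.

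I would normalize the bi-degrees one at a time in the lexicographic order
\[
(k,l)\prec(K,L)\iff l<L\ \text{ or }\ (l=L\text{ and }k<K),
\]
on the finite index set
\[
\mathcal I=\{(k,l):-1\leq k\leq r_1,\ 0\leq l\leq r_2\}\setminus\{(0,0),(-1,0)\}.
\]
The pair $(-1,0)$ is excluded because the hypothesis $f(0;0)=0$ forces $f_{-1,0}=0$, and each remaining $(k,l)\in\mathcal I$ satisfies $l\geq 1$ or $k\geq 1$. A short book-keeping check, based on writing the contribution to bi-degree $(k_0,l_0)$ in $e^{\mathrm{ad}^\Sigma_g}(f)$ as a sum of $\mathrm{ad}$-powers and enumerating the integer solutions of $(mK+k,mL+l)=(k_0,l_0)$ with $m\geq 1$, $k\geq -1$, $l\geq 0$, then shows that for $(K,L)\in\mathcal I$ and $g\in P^{K,L}$ the flow $e^{\gamma_g}$ leaves every bi-degree $(k_0,l_0)\prec(K,L)$ untouched, changes bi-degree $(K,L)$ by exactly $[g,f_{0,0}]_\Sigma=-\mathrm{ad}^\Sigma_{f_{0,0}}(g)$, and otherwise only perturbs bi-degrees strictly $\succ(K,L)$.

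The induction then proceeds exactly as in Theorem 6.1: having produced a transformed function whose $(k,l)$-component lies in $N^{k,l}$ for every $(k,l)\prec(K,L)$, I invoke the splitting $N^{K,L}\oplus\mathrm{im\ ad}^\Sigma_{f_{0,0}}|_{P^{K,L}}=P^{K,L}$ to pick $g_{K,L}\in P^{K,L}$ whose action cancels the non-$N^{K,L}$ part of the current bi-degree-$(K,L)$ piece. Composing the finitely many time-$1$ flows $e^{\gamma_{g_{K,L}}}$ in $\prec$-order yields the desired $\Phi^\lambda$.

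The main technical subtlety, and the one genuine deviation from the non-parametric proof, is the treatment of generators with $K=-1$: $\gamma_{g_{-1,L}}$ is constant in $x$, so its time-$1$ flow is a translation rather than an origin-fixing map. But $K=-1$ in $\mathcal I$ forces $L\geq 1$, so this translation vanishes at $\lambda=0$ and, for $\lambda$ in a sufficiently small neighborhood of $0\in\R^p$, it still sends a fixed neighborhood of $0\in V^N$ into another neighborhood of $0$. For $K\geq 0$ the vector field $\gamma_{g_{K,L}}$ vanishes at $x=0$ and the usual flow-box argument applies uniformly in $\lambda$. Polynomial $\lambda$-dependence of $\Phi^\lambda$, to the order of normalization, is then secured by truncating every intermediate computation modulo $P^{k,l}$ with $k>r_1$ or $l>r_2$, so that only finitely many polynomial-in-$\lambda$ manipulations are ever performed.
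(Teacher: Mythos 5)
Your proof follows the paper's own argument: the paper likewise normalizes the bi-degrees $(k,l)$ in exactly your lexicographic order ($f_{1,0},\ldots,f_{r_1,0}$; then $f_{-1,1},f_{0,1},\ldots,f_{r_1,1}$; and so on) and invokes the same grading fact $[P^{k,l},P^{K,L}]_{\Sigma}\subset P^{k+K,l+L}$ to conclude that once a term is normalized it is not disturbed by subsequent transformations. Your extra book-keeping --- the verification that $f_{-1,0}=0$ is preserved and the observation that the generators $g_{-1,L}$ produce $\lambda$-small translations rather than origin-fixing flows --- only makes explicit details that the paper's two-sentence sketch leaves implicit.
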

\begin{proof}[{\bf Sketch}]
The procedure of normalization is similar as in the proof of Theorem \ref{normalformtheorem}. With respect to ${\rm ad}^{\Sigma}_{f_{0,0}}$, one consecutively normalizes 
\begin{align}
f_{1,0}, f_{2,0}, \ldots, f_{r_1,0}; f_{-1,1}&, f_{0,1}, f_{1,1}, \ldots, f_{r_1,1}; \nonumber 
\\ 
\nonumber 
f_{-1,2}, f_{0,2}, f_{1,2}, \ldots, f_{r_1,2}; \ldots;&  f_{-1,r_2}, f_{0,r_2}, f_{1,r_2}, \ldots, f_{r_1,r_2}\ .
\end{align}
Because $[P^{k,l}, P^{K,L}]_{\Sigma}\subset P^{k+K, l+L}$, we see that once $f_{k,l}$ has been normalized to $\overline f_{k,l}$, it is not changed/affected anymore by any of the subsequent normalization transformations.
\end{proof}
Of course, Theorem \ref{brackettheorem} implies that many other standard results from the theory of normal forms will have a counterpart in the context of semigroup coupled cell networks as well. 

We will compute the normal forms of some network differential equations in Section \ref{examplessection}.

\begin{example}\label{exnormalform}
Again, let $N=3$ and let $\sigma_1, \sigma_2, \sigma_3$ be defined as in Examples \ref{ex1} and \ref{ex2}.
If
\begin{align}\nonumber
& \gamma_f(x_1, x_2, x_3)=(f(x_1, x_1), f(x_2, x_1), f(x_3, x_2))
\end{align} 
is a coupled cell network subject to $\{\sigma_1, \sigma_2\}$, then its normal form will in general be a network subject to $\{\sigma_1, \sigma_2, \sigma_3\}$, i.e. 
\begin{align}\nonumber
&\overline{\gamma_f}(x_1, x_2, x_3)=\gamma_{\overline f}(x_1, x_2, x_3) = (\overline f(x_1, x_1, x_1), \overline f(x_2, x_1, x_1), \overline f(x_3, x_2, x_1))\, .
\nonumber
\end{align}
\end{example}

\section{Symmetry and synchrony}\label{synchronysection}
Symmetry \cite{anto1}, \cite{anto2}, \cite{filipski}, \cite{pivato2}, \cite{golstew4} and synchrony \cite{antonelli2}, \cite{antonelli},\cite{dionne1}, \cite{dionne2}, \cite{romano},  \cite{golstew3}, \cite{golstew2}, \cite{torok}, \cite{stewart1}, \cite{pivato}, \cite{wang} have obtained much attention in the literature on coupled cell networks. They generate and explain interesting patterns, including synchronized states \cite{cortex}, multirythms \cite{pinto2}, \cite{modular}, \cite{pinto}, \cite{parker}, rotating waves \cite{curious}, \cite{golstew3} and synchronized chaos \cite{bursting}, \cite{curious} and can lead to symmetry and synchrony breaking bifurcations, cf. \cite{bifurcations}, \cite{anto4}, \cite{jeroen}, \cite{dias},  \cite{field4}, \cite{field3}, \cite{fieldgolub}, \cite{synbreak2}, \cite{pivato2}, \cite{golstew4}, \cite{perspective}, \cite{parker3}, \cite{synbreak}. In short, symmetry and synchrony heavily impact the dynamics and bifurcations of a network.  

In this section, we relate some of the existing theory on symmetry and synchrony to the semigroup extension that we propose. More precisely, we show that the semigroup extension does not affect the symmetries or synchrony spaces of a network. This implies in particular that the symmetries and synchrony spaces of a network are also present its normal form. The semigroup extension is thus quite harmless and very natural.

To start, let us say that a permutation $p:\{1, \ldots,N\}\to \{1, \ldots, N\}$ of the cells is a {\it network symmetry} for $\Sigma$ if it sends the inputs of a cell to the inputs of its image. That is, if
$$p\circ \sigma_j = \sigma_j\circ p\ \mbox{for all}\ 1\leq j\leq n\, .$$
The permutations with this property obviously form a group. More importantly, they are of dynamical interest because the corresponding representations
$$\lambda_p:V^N\to V^N, \, (x_1, \ldots, x_N)\mapsto (x_{p(1)}, \ldots, x_{p(N)})$$ 
conjugate every coupled cell network map $\gamma_f$ to itself:
\begin{align}
&(\gamma_f\circ \lambda_p)_i(x)=f(\pi_i(x_{p(1)}, \ldots, x_{p(N)}) = f(x_{p(\sigma_1(i))}, \ldots, x_{p(\sigma_n(i)}) = 
\nonumber \\ \nonumber
&f(x_{\sigma_1(p(i))}, \ldots, x_{\sigma_n(p(i)}) = f(\pi_{p(i)}(x))=(\gamma_f(x))_{p(i)}=(\lambda_p\circ \gamma_f)_i(x) \, .
\end{align}
In turn, this means that when $t\mapsto (x_1(t), \ldots, x_N(t))$ is a solution to the differential equations $\dot x=\gamma_f(x)$, then so is $t\mapsto (x_{p(1)}(t), \ldots, x_{p(N)}(t))$. And similarly that when $m\mapsto (x_1^{(m)}, \ldots, x_N^{(m)})$ is an orbit of the map $x^{(m+1)}=\gamma_f(x^{(m)})$, then so is $m\mapsto (x_{p(1)}^{(m)}, \ldots, x_{p(N)}^{(m)})$.

The following lemma states that network symmetries are trivially preserved by our semigroup extension:
\begin{lemma}\label{symlem}
Let $\Sigma=\{\sigma_1, \ldots, \sigma_n\}$ be a collection of maps, not necessarily forming a semigroup, and
$p: \{1, \ldots, N\} \to \{1, \ldots, N\}$ a permutation. 

Then $p$ is a network symmetry for $\Sigma$ if and only if it is a network symmetry for the semigroup $\Sigma'$ generated by $\Sigma$. 
\end{lemma}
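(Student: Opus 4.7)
The plan is to prove the two directions of the biconditional separately, with the key observation that the property of commuting with $p$ is preserved under composition of maps.

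The forward implication requires almost no work: if $p$ is a symmetry for $\Sigma'$, then in particular $p \circ \sigma_j = \sigma_j \circ p$ for every $\sigma_j \in \Sigma \subset \Sigma'$, so $p$ is trivially a symmetry for $\Sigma$.

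For the reverse implication I would argue as follows. The semigroup $\Sigma'$ generated by $\Sigma$ consists precisely of all finite compositions $\sigma_{j_1}\circ\sigma_{j_2}\circ\cdots\circ\sigma_{j_k}$ of elements of $\Sigma$. Assuming $p\circ\sigma_j=\sigma_j\circ p$ for $j=1,\ldots,n$, I would show by induction on $k$ that $p$ commutes with every such composition. The base case $k=1$ is the hypothesis; for the inductive step, writing $\tau=\sigma_{j_2}\circ\cdots\circ\sigma_{j_k}$ and using the inductive hypothesis $p\circ\tau=\tau\circ p$ together with $p\circ\sigma_{j_1}=\sigma_{j_1}\circ p$, one computes
\[
p\circ(\sigma_{j_1}\circ\tau)=(p\circ\sigma_{j_1})\circ\tau=\sigma_{j_1}\circ(p\circ\tau)=\sigma_{j_1}\circ\tau\circ p\, .
\]
Hence $p$ commutes with every element of $\Sigma'$, which is exactly the statement that $p$ is a network symmetry for $\Sigma'$.

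There is really no obstacle here: the lemma expresses the elementary fact that the centralizer of a set equals the centralizer of the subsemigroup it generates. The only thing to check carefully is that every element of $\Sigma'$ is indeed a composition word in the generators $\sigma_1,\ldots,\sigma_n$, which follows directly from the definition of $\Sigma'$ as the smallest semigroup containing $\Sigma$. The proof is essentially a two-line induction.
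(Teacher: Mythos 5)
Your proposal is correct and follows essentially the same route as the paper: the paper also observes that every element of $\Sigma'$ is a word $\sigma_{j_1}\circ\cdots\circ\sigma_{j_l}$ in the generators and that commuting with each factor implies commuting with the composition, with the forward direction being immediate by restriction. Your version merely makes the induction on word length explicit, which the paper leaves implicit.
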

\begin{proof}
Elements of the semigroup $\Sigma'$ are of the form $\sigma_{j_1}\circ\ldots\circ \sigma_{j_l}$ for certain $\sigma_{j_k}\in \Sigma$. But if $p\circ \sigma_{j_k}= \sigma_{j_k}\circ p$ for $k=1,\ldots, l$, then also $p \circ (\sigma_{j_1}\circ\ldots\circ \sigma_{j_l})=(\sigma_{j_1}\circ\ldots\circ \sigma_{j_l}) \circ p$. Thus, the collection of network symmetries of $\Sigma$ is the same as the collection of network symmetries of $\Sigma'$.
\end{proof}
Lemma \ref{symlem} implies in particular that the composition $\gamma_f\circ\gamma_g=\gamma_{f\circ_{\Sigma} g}$ and the Lie bracket $[\gamma_f, \gamma_g]=\gamma_{[f,g]_{\Sigma}}$ will exhibit the same network symmetries as $\gamma_f$ and $\gamma_g$.

Though not much more complicated, the situation is slightly more interesting for the synchronous solutions of a network. We recall that a synchrony space of a coupled cell network is an invariant subspace in which certain of the $x_i$ (with $1\leq i \leq N$) are equal. First of all, the following result is classical, see \cite{curious}, \cite{stewart1}.
\begin{proposition} \label{balanced} Let $\Sigma=\{\sigma_1, \ldots, \sigma_n\}$ be a collection of maps, not necessarily forming a semigroup, and
$P=\{P_1,\ldots, P_r \}$ a partition of $\{1,\dots, N\}$. The following are equivalent:
\begin{itemize}
\item[i)] For all $1\leq j\leq n$ and all $1\leq k_1 \leq r$ there exists a $1\leq k_2\leq r$ so that $\sigma_j(P_{k_1})\subset P_{k_2}$.
\item[2)]  For every $f\in C^{\infty}(V^n,V)$ the subspace 
$$ {\rm Syn}_{P} :=\{x\in V^N\, |\ x_{i_1}=x_{i_2} \, \mbox{when} \ i_1 \ \mbox{and}\ i_2\ \mbox{are in the same element of}\ P\, \}$$
is an invariant submanifold for the dynamics of $\gamma_f$.
\end{itemize} 
\end{proposition}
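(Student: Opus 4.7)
The plan is to prove the two implications separately, with the common observation that since ${\rm Syn}_P$ is a \emph{linear} subspace of $V^N$, its invariance under the flow (or iteration) of $\gamma_f$ is equivalent to the purely algebraic condition $\gamma_f({\rm Syn}_P)\subset {\rm Syn}_P$. Because $(\gamma_f)_i = f\circ \pi_i$, this in turn reduces to a statement about the arguments $\pi_i(x)=(x_{\sigma_1(i)},\ldots,x_{\sigma_n(i)})$ for indices $i$ belonging to a common part of $P$. I will exploit this componentwise structure throughout.

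For the direction from the balanced condition to invariance, I would fix $x\in{\rm Syn}_P$ and two indices $i_1,i_2\in P_{k_1}$ and show $(\gamma_f(x))_{i_1}=(\gamma_f(x))_{i_2}$. By the balanced hypothesis, for each $1\leq j\leq n$ there is some $k_2=k_2(j)$ with $\sigma_j(i_1),\sigma_j(i_2)\in P_{k_2}$; since $x\in {\rm Syn}_P$ this forces $x_{\sigma_j(i_1)}=x_{\sigma_j(i_2)}$. Hence $\pi_{i_1}(x)=\pi_{i_2}(x)$, and applying $f$ to both sides gives the required equality. This works uniformly in $f$, so ${\rm Syn}_P$ is invariant under every $\gamma_f$.

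For the converse I would argue by contrapositive. If the partition is not balanced, there exist $j$, $k_1$ and indices $i_1,i_2\in P_{k_1}$ with $\sigma_j(i_1)$ and $\sigma_j(i_2)$ lying in distinct parts. Pick $x\in{\rm Syn}_P$ with $x_{\sigma_j(i_1)}\neq x_{\sigma_j(i_2)}$, which is possible because values assigned to distinct parts are unconstrained (and $\dim V\geq 1$). Then take the simplest admissible $f$ that detects this, namely the linear projection $f(X_1,\ldots,X_n):=X_j$, which lies in $C^{\infty}(V^n,V)$. With this choice $(\gamma_f(x))_{i_1}=x_{\sigma_j(i_1)}\neq x_{\sigma_j(i_2)}=(\gamma_f(x))_{i_2}$, so $\gamma_f(x)\notin{\rm Syn}_P$, contradicting the invariance hypothesis.

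The argument is essentially free of obstacles; the only subtle point is the choice of test function $f$ in the converse direction, for which a coordinate projection suffices and avoids any appeal to bump functions or smoothness in $V$. I would also note at the outset that the statement makes no use of the semigroup property of $\Sigma$, which matches its placement as a purely combinatorial input-preservation criterion, and is consistent with the subsequent claim that semigroup extension leaves synchrony spaces unchanged.
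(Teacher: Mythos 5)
Your proposal is correct and follows essentially the same route as the paper: both reduce invariance of the linear subspace ${\rm Syn}_P$ to the algebraic condition $f\circ\pi_{i_1}=f\circ\pi_{i_2}$ on ${\rm Syn}_P$ for $i_1,i_2$ in the same part, and then eliminate the quantifier over $f$ to arrive at the balanced condition. Your only addition is to make explicit the test function (a coordinate projection) that the paper leaves implicit in its ``holds for all $f$ if and only if'' step.
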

\begin{proof}
The subspace ${\rm Syn}_P$ is invariant under the flow of the differential equation $\dot x=\gamma_f(x)$ if and only if the vector field $\gamma_f$ is tangent to ${\rm Syn}_P$. Similarly, ${\rm Syn}_P$ is invariant under the map $x^{(m+1)}=\gamma_f(x^{(m)})$ if and only if $\gamma_f$ sends ${\rm Syn}_P$ to itself. Both properties just mean that for all $x\in {\rm Syn}_P$ it holds that
$$f(x_{\sigma_1(i_1)}, \ldots, x_{\sigma_n(i_1)}) = f(x_{\sigma_1(i_2)}, \ldots, x_{\sigma_n(i_2)})\ \mbox{for all}\ i_1, i_2 \ \mbox{in the same element of} \ P\, .$$ 
The latter statement holds for all $f\in C^{\infty}(V^n,V)$ if and only if for all $x\in {\rm Syn}_P$,
$$x_{\sigma_j(i_1)} = x_{\sigma_j(i_2)} \ \mbox{for all}\  1\leq j \leq n\ \mbox{and all}\ i_1,i_2\ \mbox{in the same element of}\ P\, .$$
It is not hard to see that this is true precisely when all $\sigma_j\in \Sigma$ map the elements of $P$ into elements of $P$.
\end{proof}
A partition $P$ of $\{1,\ldots,N\}$ with property {\it i)} is sometimes called a {\it balanced partition} or {\it balanced coloring} and a subspace ${\rm Syn}_P$ satisfying property {\it ii)} a {\it (robust) synchrony space}. 

The following result says that the synchrony spaces of a network do not change if one extends the network architecture to a semigroup:
\begin{lemma}\label{robustness}
Let $\Sigma=\{\sigma_1, \ldots, \sigma_n\}$ be a collection of maps, not necessarily forming a semigroup, and
$P=\{P_1,\ldots, P_r\}$ a partition of $\{1,\dots, N\}$.

Then ${\rm Syn}_P$ is a (robust) synchrony space for $\Sigma$ if and only if it is a (robust) synchrony space for the semigroup $\Sigma'$ generated by $\Sigma$. 
\end{lemma}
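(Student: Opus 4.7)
The plan is to reduce the statement to a combinatorial property of the partition via Proposition \ref{balanced}, and then to observe that this property is preserved under composition of maps, which makes it pass from $\Sigma$ to the semigroup $\Sigma'$ it generates.

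First, I would invoke Proposition \ref{balanced} to replace the dynamical statement by its combinatorial equivalent: ${\rm Syn}_P$ is a (robust) synchrony space for a collection $\Sigma$ precisely when $P$ is balanced for $\Sigma$, that is, for every $\sigma\in \Sigma$ and every block $P_{k_1}\in P$ there exists a block $P_{k_2}\in P$ with $\sigma(P_{k_1})\subset P_{k_2}$. Thus the lemma reduces to the claim that $P$ is balanced for $\Sigma$ if and only if $P$ is balanced for $\Sigma'$.

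One direction is immediate: since $\Sigma\subset \Sigma'$, any partition balanced for $\Sigma'$ is automatically balanced for $\Sigma$. For the converse, I would argue that the class of maps $\sigma:\{1,\ldots,N\}\to\{1,\ldots,N\}$ that map each block of $P$ into a block of $P$ is closed under composition: if $\sigma(P_{k_1})\subset P_{k_2}$ and $\tau(P_{k_2})\subset P_{k_3}$, then $(\tau\circ\sigma)(P_{k_1})\subset P_{k_3}$. Since every element of $\Sigma'$ is, by definition, a composition $\sigma_{j_1}\circ\cdots\circ \sigma_{j_l}$ of elements of $\Sigma$, a straightforward induction on $l$ then shows that every element of $\Sigma'$ maps blocks of $P$ into blocks of $P$, so $P$ is balanced for $\Sigma'$.

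This closes the argument. I do not expect any real obstacle here: the proof is essentially the same as the one given for Lemma \ref{symlem}, with ``$p\circ\sigma_{j_k}=\sigma_{j_k}\circ p$'' replaced by ``$\sigma_{j_k}$ maps blocks of $P$ into blocks of $P$''. The only conceptual ingredient is the closure-under-composition observation, together with the combinatorial reformulation provided by Proposition \ref{balanced}; both are routine.
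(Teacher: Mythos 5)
Your proposal is correct and follows essentially the same route as the paper's proof: reduce to balancedness via Proposition \ref{balanced}, then note that the property of mapping blocks into blocks is closed under composition, so it passes to the generated semigroup $\Sigma'$. You actually spell out the composition/induction step that the paper leaves implicit, which is a small improvement in completeness.
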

\begin{proof}
Elements of the semigroup $\Sigma'$ are of the form $\sigma_{j_1}\circ\ldots\circ \sigma_{j_l}$ for certain $\sigma_{j_k}\in \Sigma$. This implies that the elements of $\Sigma$ send the elements of $P$ inside elements of $P$ if and only if the elements of $\Sigma'$ do. In other words: that the collection of balanced partitions of $\Sigma$ and of $\Sigma'$ are the same. The result now follows from Proposition \ref{balanced}.
\end{proof}
Lemma \ref{robustness} implies in particular that the composition $\gamma_f\circ\gamma_g=\gamma_{f\circ_{\Sigma}g}$ and the Lie bracket $[\gamma_f, \gamma_g]=\gamma_{[f,g]_{\Sigma}}$ will exhibit the same synchrony spaces as $\gamma_f$ and $\gamma_g$.

We conclude this section with the following simple but important observation:
\begin{corollary}
Let $\Sigma =\{\sigma_1, \ldots, \sigma_n\}$ be a collection of maps, not necessarily forming a semigroup, and $\gamma_f$ a coupled cell network vector field subject to $\Sigma$. 

Then a local normal form $\gamma_{\overline f}$ of $\gamma_f$ has the same network symmetries and the same synchrony spaces as $\gamma_f$.
\end{corollary}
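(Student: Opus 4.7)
The plan is to reduce the statement to the two preceding lemmas by passing to the semigroup closure. Since $\Sigma$ need not itself be a semigroup, I would first extend it to the smallest semigroup $\Sigma'\supseteq\Sigma$ it generates, and rewrite $\gamma_f$ as a coupled cell network vector field subject to $\Sigma'$ via the trivially extended response function $f'(X_1,\ldots,X_n,X_{n+1},\ldots,X_{n'}):=f(X_1,\ldots,X_n)$, exactly as described in Section \ref{semigroupsection}. Now $\Sigma'$ is a semigroup, so Theorem \ref{normalformtheorem} (or its parameter version, Theorem \ref{normalformtheoremparameters}) applies, and produces an analytic local conjugacy $\Phi$ transforming $\gamma_{f'}=\gamma_f$ into $\gamma_{\overline f'}=:\gamma_{\overline f}$, which is again a coupled cell network vector field admissible for $\Sigma'$.

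Next I would invoke Lemma \ref{symlem}, which says that the network symmetries of $\Sigma$ and $\Sigma'$ coincide, and Lemma \ref{robustness}, which says the same for their (robust) synchrony spaces. Consequently, every network symmetry $\lambda_p$ of $\Sigma$ is a network symmetry of $\Sigma'$, hence commutes with every $\Sigma'$-admissible vector field; in particular it commutes both with $\gamma_f$ and with $\gamma_{\overline f}$. Similarly, every synchrony subspace $\mathrm{Syn}_P$ of $\Sigma$ is a synchrony subspace of $\Sigma'$ by Proposition \ref{balanced}, hence an invariant submanifold for every $\Sigma'$-admissible vector field, and so in particular for both $\gamma_f$ and $\gamma_{\overline f}$.

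The only thing left to verify is that nothing is lost in the opposite direction, i.e.\ that the normal form does not shed any of the original network symmetries or synchrony spaces. But this is automatic from the preceding paragraph: both $\gamma_f$ and $\gamma_{\overline f}$ are admissible for the same semigroup $\Sigma'$, and the network symmetries and synchrony spaces under consideration are those of $\Sigma$ (equivalently $\Sigma'$), which are intrinsic to the network architecture rather than to the particular response function. Thus $\gamma_f$ and $\gamma_{\overline f}$ carry exactly the same collection of network symmetries and synchrony spaces inherited from $\Sigma$, proving the corollary.

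I do not anticipate any real obstacle here: the entire statement is a bookkeeping consequence of the normal form theorem together with Lemmas \ref{symlem} and \ref{robustness}. The only subtlety worth flagging is the one mentioned in Example \ref{exnormalform}, namely that while $\gamma_{\overline f}$ remains a coupled cell network, it is in general admissible only for the enlarged semigroup $\Sigma'$ and not for the original $\Sigma$; the point of the corollary is that this enlargement is precisely harmless for the symmetry/synchrony structure.
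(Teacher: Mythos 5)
Your proposal is correct and follows essentially the same route as the paper: the normal form $\gamma_{\overline f}$ is admissible for the semigroup $\Sigma'$ generated by $\Sigma$, and Lemmas \ref{symlem} and \ref{robustness} identify the network symmetries and (robust) synchrony spaces of $\Sigma'$ with those of $\Sigma$. The paper's proof is just a terser version of the same argument.
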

\begin{proof}
$\gamma_{\overline f}$ is a coupled cell network with respect to the semigroup $\Sigma'$ generated by $\Sigma$. Thus, the result follows from Lemma \ref{symlem} and Lemma \ref{robustness}.
\end{proof}
\begin{example}
Again, let $N=3$ and let $\sigma_1, \sigma_2, \sigma_3$ be defined as in Examples \ref{ex1} and \ref{ex2}. 
Recall that a coupled cell network differential equation subject to $\{\sigma_1, \sigma_2\}$ is of the form
\begin{align}\nonumber
\dot x_1 = f(x_1, x_1), \, \dot x_2=f(x_2, x_1),\,  \dot x_3 = f(x_3, x_2)\, .
\end{align}
These equations do not have any network symmetries, but they do admit the nontrivial balanced partitions 
$$\{1,2\}\cup\{3\}\ \mbox{and} \ \{1,2,3\}\, .$$ 
The corresponding invariant synchrony spaces $\{ x_1=x_2\}$ and $\{x_1=x_2=x_3\}$ are preserved in the normal form, because the latter is a coupled cell network subject to $\{\sigma_1, \sigma_2, \sigma_3\}$.
\end{example}

\section{Input symmetries}\label{symmetrysection}
In several of the coupled cell networks that appear in the mathematical literature, the function $f$ that determines the network dynamics is assumed invariant under the permutation of some of its inputs. In this section, we point out some conditions under which such an {\it input symmetry} can be preserved in the normal form of $f$. This section is not important for the remainder of this paper and can be skipped at first reading. 

An input symmetry is reflected by a permutation
$q:\{1, \ldots, n\}\to\{1, \ldots, n\}$ with the property that $f\circ \lambda_q \circ \pi_i=f\circ \pi_i$ for all $1\leq i\leq N$. That is, for which
\begin{align}\label{invariance}
f(x_{\sigma_{1}(i)},\ldots, x_{\sigma_n(i)})= f(x_{\sigma_{q(1)}(i)}, \ldots, x_{\sigma_{q(n)}(i)})\ \mbox{for all}\ x\in V^N \ \mbox{and all} \ 1\leq i\leq N\, .
\end{align}
The input symmetries of $f$ obviously form a group.

We do not aim here to give a full answer to the question under which conditions the invariance (\ref{invariance}) can be preserved in a normal form, because this question is very delicate. The first problem is that the presence of an input symmetry makes that the maps $\sigma_1, \ldots, \sigma_n$ are not uniquely determined. A second complication is that an input symmetry of $f$ has a nontrivial impact on its robust synchrony spaces.

Instead, we will only consider the case here that an input symmetry gives rise to a network symmetry. This means that along with the permutation $q$ of the inputs $\{1, \ldots, n\}$ there exists a permutation $p$ of the cells $\{1, \ldots, N\}$ that sends the $j$-th input of each cell to the $q(j)$-th input of its image, i.e. that
\begin{align}\label{inputsymmetry}
p\circ \sigma_j = \sigma_{q(j)} \circ p \ \mbox{for all} \ 1\leq j \leq n\, .
\end{align}
We call a permutation $q$ for which there exists a permutation $p$ so that (\ref{invariance}) and (\ref{inputsymmetry}) hold a {\it dynamical input symmetry}. We remark that if
$p_1\circ \sigma_j=\sigma_{q_1(j)}\circ p_1$ and $p_2\circ \sigma_j=\sigma_{q_2(j)}\circ p_2$ for all $1\leq j\leq n$, then 
$$(p_1\circ p_2) \circ \sigma_j = \sigma_{(q_1\circ q_2)(j)}\circ(p_1\circ p_2)\, .$$  
This implies that the dynamical input symmetries form a subgroup of the group of all input symmetries. They are precisely the input symmetries that correspond to a dynamical symmetry of the network:
\begin{proposition}
Let $p$ be a permutation of $\{1,\dots, N\}$ and $q$ a permutation of $\{1,\ldots, n\}$. Assume that 
$p\circ \sigma_j = \sigma_{q(j)} \circ p$ for all $1\leq j \leq n$ and that (\ref{invariance}) holds.
Then  
$$\gamma_f\circ \lambda_p= \lambda_p\circ \gamma_f\, .$$ 
\end{proposition}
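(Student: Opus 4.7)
The plan is a direct component-wise calculation that combines the two hypotheses in exactly the right order. The key observation is that $(\lambda_p(x))_k = x_{p(k)}$, so one can simply chase an arbitrary $x \in V^N$ through both sides of the desired identity and match the results coordinate by coordinate.

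First I would compute the $i$-th component of $\gamma_f \circ \lambda_p$ by unpacking the definitions:
\begin{align*}
(\gamma_f\circ \lambda_p)_i(x) &= f\bigl((\lambda_p(x))_{\sigma_1(i)}, \ldots, (\lambda_p(x))_{\sigma_n(i)}\bigr) \\
&= f\bigl(x_{p(\sigma_1(i))}, \ldots, x_{p(\sigma_n(i))}\bigr).
\end{align*}
Next I would apply the network-symmetry relation $p\circ \sigma_j = \sigma_{q(j)} \circ p$ in each slot to rewrite this as
\[
f\bigl(x_{\sigma_{q(1)}(p(i))}, \ldots, x_{\sigma_{q(n)}(p(i))}\bigr).
\]
Now I would invoke the input-symmetry invariance (\ref{invariance}) at the cell $p(i)$ (with $x$ in place of $x$), which says exactly that the value of $f$ on the $q$-permuted tuple of inputs of $p(i)$ equals the value on the un-permuted tuple of inputs of $p(i)$. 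This turns the expression into $f(x_{\sigma_1(p(i))}, \ldots, x_{\sigma_n(p(i))}) = (\gamma_f(x))_{p(i)} = (\lambda_p \circ \gamma_f)_i(x)$, as desired.

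Since this holds for every $1 \leq i \leq N$ and every $x$, we conclude $\gamma_f \circ \lambda_p = \lambda_p \circ \gamma_f$. There is no real obstacle: the proof is just the two hypotheses applied in sequence, the first to move $p$ past each $\sigma_j$ (at the cost of permuting the indices by $q$) and the second to absorb that $q$-permutation into the symmetry of $f$. The only thing to be careful about is the direction of the index manipulation, namely that $\lambda_p$ acts on coordinates by $x \mapsto (x_{p(1)}, \ldots, x_{p(N)})$ rather than by the inverse permutation, so that $(\gamma_f(x))_{p(i)}$ is indeed the $i$-th component of $\lambda_p \circ \gamma_f$ evaluated at $x$.
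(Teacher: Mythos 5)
Your proof is correct and is essentially the paper's own argument: the paper merely packages the first two steps as the separate identity $\pi_i\circ\lambda_p=\lambda_q\circ\pi_{p(i)}$ before applying the invariance of $f$ on ${\rm im}\,\pi_{p(i)}$, which is exactly your coordinate-wise computation. No gaps.
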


\begin{proof}
First of all we claim that
\begin{align}\label{pipi}
\mbox{when}\ p\circ \sigma_j = \sigma_{q(j)} \circ p \ \mbox{for all} \ 1\leq j \leq n\ \mbox{then}\ \pi_i\circ \lambda_p = \lambda_q \circ \pi_{p(i)}\ \mbox{for all}\ 1\leq i\leq N\, .
\end{align} 
This follows from a little computation:
\begin{align}
(\pi_i\circ & \lambda_p)(x) = \pi_i(x_{p(1)}, \ldots, x_{p(N)}) = (x_{p(\sigma_1(i))}, \ldots, x_{p(\sigma_n(i))})= \nonumber \\ \nonumber
(x_{\sigma_{q(1)}(p(i))}&, \ldots, x_{\sigma_{q(n)}(p(i))}) = \lambda_q(x_{\sigma_{1}(p(i))}, \ldots, x_{\sigma_{n}(p(i))}) = (\lambda_q\circ \pi_{p(i)})(x)\, .
\end{align}
Using (\ref{pipi}) and our assumption (\ref{invariance}) that $f\circ \lambda_q=f$ on every ${\rm im}\, \pi_i$, we hence find that
\begin{align}\nonumber
(\gamma_f\circ \lambda_p)_i= f \circ \pi_i\circ \lambda_p = f\circ \lambda_q\circ \pi_{p(i)} = f\circ \pi_{p(i)} =(\gamma_f)_{p(i)} = (\lambda_p\circ \gamma_f)_i\, .
\end{align}
\end{proof}
\noindent The relevance of dynamical input symmetries for normal forms is explained below. We first show that dynamical input symmetries survive the semigroup extension.

\begin{lemma}
Let $\Sigma=\{\sigma_1, \ldots, \sigma_n\}$ be a collection of maps, not necessarily forming a semigroup and let $q:\{1, \ldots, n\}\to\{1, \ldots, n\}$ be a dynamical input symmetry for $\Sigma$. 

Then the latter extends to a unique dynamical input symmetry 
$$q':\{1, \ldots, n, n+1, \ldots, n'\}\to \{1, \ldots, n, n+1, \ldots, n'\}$$ for the semigroup $\Sigma'=\{\sigma_1, \ldots, \sigma_n, \sigma_{n+1}, \ldots, \sigma_{n'}\}$ generated by $\Sigma$.
\end{lemma}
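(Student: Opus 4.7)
My plan is to extend $q$ to $\Sigma'$ by conjugating with a witness permutation of the cells. Pick any permutation $p$ of $\{1,\ldots,N\}$ satisfying $p\circ\sigma_j=\sigma_{q(j)}\circ p$ for $1\le j\le n$; such a $p$ exists by the hypothesis that $q$ is a dynamical input symmetry. Rewriting the intertwining as $p\circ\sigma_j\circ p^{-1}=\sigma_{q(j)}$, I notice that conjugation $C_p(\sigma):=p\circ\sigma\circ p^{-1}$ is a semigroup automorphism of the monoid of self-maps of $\{1,\ldots,N\}$ that permutes the generating set $\{\sigma_1,\ldots,\sigma_n\}$ of $\Sigma'$ via $q$. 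Since a semigroup homomorphism that sends generators into $\Sigma'$ preserves $\Sigma'$ setwise, $C_p$ restricts to a bijection $\Sigma'\to\Sigma'$. I would therefore define $q':\{1,\ldots,n'\}\to\{1,\ldots,n'\}$ by the single rule
\begin{equation*}
\sigma_{q'(k)}:=p\circ\sigma_k\circ p^{-1},
\end{equation*}
which is well-defined because the maps $\sigma_1,\ldots,\sigma_{n'}$ are distinct.

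The next step is to verify that $q'$ really is a dynamical input symmetry of $\Sigma'$ extending $q$. Bijectivity of $q'$ follows because $C_{p^{-1}}$ inverts $C_p$ on $\Sigma'$; that $q'|_{\{1,\ldots,n\}}=q$ is immediate from $p\circ\sigma_j\circ p^{-1}=\sigma_{q(j)}$ together with distinctness of the $\sigma_j$. The intertwining $p\circ\sigma_k=\sigma_{q'(k)}\circ p$ for every $1\le k\le n'$ holds by construction. It remains to check the analogue of (\ref{invariance}) for the extended function $f'(X_1,\ldots,X_{n'}):=f(X_1,\ldots,X_n)$: because $f'$ ignores arguments $n+1,\ldots,n'$ and because $q'$ permutes the block $\{1,\ldots,n\}$ into itself (since $q$ does), the required identity
\begin{equation*}
f'(x_{\sigma_1(i)},\ldots,x_{\sigma_{n'}(i)})=f'(x_{\sigma_{q'(1)}(i)},\ldots,x_{\sigma_{q'(n')}(i)})
\end{equation*}
reduces directly to the original invariance of $f$ under $q$.

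For uniqueness, suppose $q''$ is another dynamical input symmetry of $\Sigma'$ with $q''|_{\{1,\ldots,n\}}=q$, witnessed by some permutation $p''$ not necessarily equal to $p$. Writing each $\sigma_k\in\Sigma'$ as a word $\sigma_k=\sigma_{j_1}\circ\cdots\circ\sigma_{j_l}$ in the generators and conjugating by $p''$ term by term using $p''\circ\sigma_{j_r}\circ(p'')^{-1}=\sigma_{q(j_r)}$, I obtain $\sigma_{q''(k)}=\sigma_{q(j_1)}\circ\cdots\circ\sigma_{q(j_l)}$, an element of $\Sigma'$ that depends only on $q$ and on the chosen factorization. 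The identical computation with $p$ and $q'$ in place of $p''$ and $q''$ yields the same right-hand side, hence $\sigma_{q''(k)}=\sigma_{q'(k)}$ and by distinctness $q''=q'$. The only point needing real care is that this right-hand side is independent of which factorization of $\sigma_k$ is chosen; this is precisely what the intertwining guarantees, and it is simultaneously the property that makes $C_p$ well-defined on $\Sigma'$ in the first place. I expect this is the main (mildly) subtle step; everything else is routine chasing of the semigroup structure.
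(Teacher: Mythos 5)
Your proposal is correct and follows essentially the same route as the paper: both hinge on the identity $p\circ(\sigma_{j_1}\circ\sigma_{j_2})=(\sigma_{q(j_1)}\circ\sigma_{q(j_2)})\circ p$ and on cancelling the invertible $p$ to obtain well-definedness and uniqueness of the extension, with your conjugation automorphism $C_p$ being a repackaging of the paper's computation with $\widetilde\sigma_{j_1}(j_2)$. Your extra checks (bijectivity of $q'$, independence of the chosen witness $p$, and the invariance of the extended function $f'$) are welcome details that the paper's proof leaves implicit.
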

\begin{proof}
Recall that elements of $\Sigma'$ are of the form $\sigma_{j_1}\circ\ldots \circ \sigma_{j_l}$ for certain $\sigma_{j_k}\in\Sigma$. Assume now that $p\circ \sigma_{j_1}=\sigma_{q(j_1)}\circ p$ and $p\circ \sigma_{j_2}=\sigma_{q(j_2)}\circ p$. 
Then it follows that 
$$p\circ \sigma_{\widetilde \sigma_{j_1}(j_2)} = p \circ (\sigma_{j_1} \circ \sigma_{j_2})= \sigma_{q(j_1)} \circ p \circ \sigma_{j_2}  = (\sigma_{q(j_1)}\circ \sigma_{q(j_2)}) \circ p = \sigma_{\widetilde \sigma_{q(j_1)}(q(j_2))}\circ p\, .$$ 
This means that if an extension $q': \{1, \ldots, n, n+1, \ldots, n'\}\to \{1, \ldots, n, n+1, \ldots, n'\} $ exists, then it must be unique and satisfy
$$q'(\widetilde \sigma_{j_1}(j_2))= \widetilde \sigma_{q(j_1)}(q(j_2))\, .$$ 
If now $p\circ \sigma_{j_3}=\sigma_{q(j_3)}\circ p$ and $p\circ \sigma_{j_4}=\sigma_{q(j_4)}\circ p$ and $\widetilde \sigma_{j_1}(j_2)=\widetilde \sigma_{j_3}(j_4)$, then actually
$$\sigma_{\widetilde \sigma_{q(j_1)}(q(j_2))}\circ p = p\circ \sigma_{\widetilde \sigma_{j_1}(j_2)} = p\circ \sigma_{\widetilde \sigma_{j_3}(j_4)} =\sigma_{\widetilde \sigma_{q(j_3)}(q(j_4))}\circ p\, .$$
From this it follows that $\widetilde \sigma_{q(j_1)}(q(j_2))= \widetilde \sigma_{q(j_3)}(q(j_4))$ if $\widetilde \sigma_{j_1}(j_2)=\widetilde \sigma_{j_3}(j_4)$ and hence that $q'$ is well-defined.
\end{proof}
The following result explains that network symmetries are preserved under taking compositions and Lie brackets:
\begin{theorem}\label{invariancetheorem}
Let $\Sigma$ be a semigroup and assume $p\circ \sigma_j = \sigma_{q(j)} \circ p$ for all $1\leq j \leq n$. Then
\begin{align}
\begin{array}{rll} 
(f\circ_{\Sigma}g)\circ \lambda_q = &\!\!\!\! (f\circ \lambda_q)\circ_{\Sigma}(g\circ \lambda_q) & \!\! \mbox{on every}\ \, {\rm im}\, \pi_i\, ,
\\
\ [f,g]_{\Sigma}\circ \lambda_q  = & \!\!\!\! [ f\circ \lambda_q,g\circ \lambda_q]_{\Sigma} & \!\! \mbox{on every}\ \, {\rm im}\, \pi_i\, .
\end{array}
\end{align}
\end{theorem}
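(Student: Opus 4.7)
The plan is to reduce both identities to a single commutation relation between the input‑reindexing map $\lambda_q$ and the representation $A_{\sigma_j}$, after which each of the two statements becomes a straightforward unwinding of the definitions of $\circ_\Sigma$ and $[\cdot,\cdot]_\Sigma$ given in Theorem \ref{composition} and Theorem \ref{brackettheorem}.

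First I would extract an index-level consequence of the hypothesis $p\circ\sigma_j=\sigma_{q(j)}\circ p$. Applying the hypothesis twice to $\sigma_{\widetilde\sigma_k(j)}=\sigma_k\circ\sigma_j$ yields
\[
\sigma_{q(\widetilde\sigma_k(j))}\circ p \;=\; p\circ \sigma_{\widetilde\sigma_k(j)} \;=\; p\circ\sigma_k\circ\sigma_j \;=\; \sigma_{q(k)}\circ\sigma_{q(j)}\circ p \;=\; \sigma_{\widetilde\sigma_{q(k)}(q(j))}\circ p ,
\]
and, since $\Sigma$ is a semigroup, uniqueness of indexing forces
\[
\widetilde\sigma_{q(k)}(q(j)) \;=\; q(\widetilde\sigma_k(j)) \quad \mbox{for all } 1\le j,k\le n.
\]
Reading off the coordinates of $A_{\sigma_j}\lambda_q X$ and of $\lambda_q A_{\sigma_{q(j)}} X$ via formula (\ref{Amapdef}), this index identity immediately upgrades to the global operator identity
\[
A_{\sigma_j}\circ \lambda_q \;=\; \lambda_q \circ A_{\sigma_{q(j)}} \qquad \mbox{on all of } V^n.
\]

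With this in hand, the first statement is essentially bookkeeping. Expanding $(f\circ_\Sigma g)\circ\lambda_q$ via (\ref{alsocomposition}), substituting $A_{\sigma_j}\lambda_q=\lambda_q A_{\sigma_{q(j)}}$ inside each inner slot, and then recognizing that the outer $f$ evaluated at a $\lambda_q$-permuted tuple of arguments is by definition $(f\circ\lambda_q)$ applied after a relabeling of slots $j\mapsto q(j)$, one recovers precisely $(f\circ\lambda_q)\circ_\Sigma(g\circ\lambda_q)$. Since no restriction is used, the identity in fact holds everywhere on $V^n$; the restriction to ${\rm im}\,\pi_i$ in the statement only reflects that this is the form needed when one reinserts the relation into $\gamma_f,\gamma_g$.

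For the bracket the same substitution applies, but one also needs the chain rule in the form $D_k(f\circ\lambda_q)=D_{q^{-1}(k)}f\circ\lambda_q$, which follows because $\lambda_q$ permutes coordinates. Plugging this into the defining sum (\ref{bracketdef}) for $[f\circ\lambda_q,g\circ\lambda_q]_\Sigma$ and reindexing the summation by $k=q(j)$ converts it term by term into $\sum_j D_jf(\lambda_q\,\cdot)\cdot g(\lambda_q A_{\sigma_{q(j)}}\,\cdot) - (f\leftrightarrow g)$, which by the commutation relation equals $[f,g]_\Sigma\circ\lambda_q$. The only real obstacle is the careful handling of indices in this relabeling and in verifying the $\widetilde\sigma$ identity above; everything else is a formal consequence of the algebraic framework already set up in Sections \ref{compositionsection} and \ref{bracketsection}.
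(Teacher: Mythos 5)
Your proof is correct, and it takes a genuinely different route from the paper's. The paper proves the first identity by working downstairs on $V^N$: it precomposes with $\pi_{p(i)}$, applies the relation $\pi_i\circ\lambda_p=\lambda_q\circ\pi_{p(i)}$ of (\ref{pipi}) twice together with $A_{\sigma_j}\circ\pi_i=\pi_{\sigma_j(i)}$, and concludes on each ${\rm im}\,\pi_{p(i)}$ (hence on each ${\rm im}\,\pi_i$, since $p$ is a bijection), leaving the bracket case as ``similar''. You instead stay entirely at the symbolic level: from $p\circ\sigma_j=\sigma_{q(j)}\circ p$ you extract the index identity $\widetilde\sigma_{q(k)}(q(j))=q(\widetilde\sigma_k(j))$ --- essentially the same identity the paper derives in the preceding lemma on extending dynamical input symmetries; note that cancelling $p$ uses its invertibility and the uniqueness of the index uses that the $\sigma_j$ are distinct --- and then upgrade it via (\ref{Amapdef}) to the commutation relation $A_{\sigma_j}\circ\lambda_q=\lambda_q\circ A_{\sigma_{q(j)}}$ on all of $V^n$. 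Both claimed identities then follow by unwinding (\ref{alsocomposition}) and (\ref{bracketdef}) with the reindexing $j\mapsto q(j)$ and the chain rule $D_k(f\circ\lambda_q)=D_{q^{-1}(k)}f\circ\lambda_q$; I have checked these computations and they are right. What your route buys is a strictly stronger conclusion --- the identities hold on all of $V^n$, not merely on the sets ${\rm im}\,\pi_i$, so $f\mapsto f\circ\lambda_q$ is an honest automorphism of the symbolic associative and Lie algebra structures rather than one modulo $\ker\gamma$ --- together with an explicit treatment of the bracket case. What the paper's route buys is economy: it reuses only the already established relation (\ref{pipi}) and requires no new index computation.
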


\begin{proof} With slight abuse of notation, we write
\begin{align}\nonumber
&(f \circ_{\Sigma} g)\circ \lambda_q\circ\pi_{p(i)} = f(\ldots, g(A_{\sigma_j}\circ \lambda_q \circ \pi_{p(i)}), \ldots)= f(\ldots, g(A_{\sigma_j}\circ\pi_i\circ \lambda_p), \ldots)=\\ \nonumber &
f(\ldots, g(\pi_{\sigma_j(i)}\circ \lambda_p), \ldots) = f(\ldots, g(\lambda_{q}\circ\pi_{p(\sigma_j(i))}), \ldots)= f(\ldots, (g\circ \lambda_q)(\pi_{p(\sigma_j(i))}), \ldots)= \\
\nonumber &f(\ldots, (g\circ \lambda_q)(\pi_{\sigma_{q(j)}(p(i))}), \ldots) = (f\circ \lambda_q)(\ldots, (g\circ \lambda_q)(\pi_{\sigma_{j}(p(i))}), \ldots)=\\
\nonumber &(f\circ \lambda_q)(\ldots, (g\circ \lambda_q)(A_{\sigma_j}\circ \pi_{p(i)}), \ldots) =((f\circ \lambda_q)\circ_{\Sigma}(g\circ \lambda_q))\circ \pi_{p(i)}\, .
\end{align}
This proves that $(f\circ_{\Sigma}g)\circ \lambda_q = (f\circ \lambda_q)\circ_{\Sigma}(g\circ \lambda_q)$ on each ${\rm im}\, \pi_{p(i)}$ and hence, because $p$ is invertible, on each ${\rm im}\, \pi_i$. The proof for the Lie bracket is similar.
\end{proof}
\noindent Finally, we conclude
\begin{corollary}
Let $\Sigma =\{\sigma_1, \ldots, \sigma_n\}$ be a collection of maps, not necessarily forming a semigroup, and $\gamma_f$ a coupled cell network vector field subject to $\Sigma$. 

Then the local normal form $\gamma_{\overline f}$ of $\gamma_f$ can be chosen to have the same dynamical input symmetries as $\gamma_f$.
\end{corollary}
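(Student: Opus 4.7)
The plan is to apply the normal form theorem inside the subspace of functions that respect the dynamical input symmetry, after first reducing to a setting where that symmetry acts globally. Begin by using the preceding lemma to lift $q$ to a dynamical input symmetry $q'$ of the semigroup $\Sigma'$ generated by $\Sigma$, with the same cell permutation $p$, and view $\gamma_f$ as a $\Sigma'$-network with defining function $f'\in C^{\infty}(V^{n'},V)$. The hypothesis $f\circ \lambda_q\circ \pi_i=f\circ \pi_i$ becomes $f'\circ \lambda_{q'}\circ \pi_i=f'\circ \pi_i$ for all $1\leq i\leq N$, but $f'$ and $f'\circ \lambda_{q'}$ need not agree outside $\bigcup_i{\rm im}\,\pi_i$. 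Iterating this on-image identity gives $f'\circ \lambda_{q'^k}\circ \pi_i=f'\circ \pi_i$ for every $k\geq 0$, so the average
$$
\hat f:=\frac{1}{|q'|}\sum_{k=0}^{|q'|-1}f'\circ \lambda_{q'^k}
$$
still satisfies $\gamma_{\hat f}=\gamma_f$, while additionally $\hat f\circ \lambda_{q'}=\hat f$ pointwise on all of $V^{n'}$.

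Set $\hat I:=\{\,h\in C^{\infty}(V^{n'},V)\mid h\circ \lambda_{q'}=h\,\}$. The key claim is that $\hat I$ is a Lie subalgebra of $(C^{\infty}(V^{n'},V),[\cdot,\cdot]_{\Sigma'})$. The relation $p\circ \sigma_j=\sigma_{q'(j)}\circ p$ together with the uniqueness property defining $\widetilde \sigma$ forces $q'(\widetilde \sigma_k(j))=\widetilde \sigma_{q'(k)}(q'(j))$, which translates into the intertwining $A_{\sigma_j}\circ \lambda_{q'}=\lambda_{q'}\circ A_{\sigma_{q'(j)}}$. Combining this with the chain-rule identity $D_jh(\lambda_{q'}X)=D_{q'(j)}h(X)$ valid for $h\in\hat I$, a relabelling $k=q'(j)$ in formula (\ref{bracketdef}) yields $[h_1,h_2]_{\Sigma'}\circ \lambda_{q'}=[h_1,h_2]_{\Sigma'}$ whenever $h_1,h_2\in\hat I$. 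In particular ${\rm ad}^{\Sigma'}_{\hat f_0}$ preserves each finite-dimensional space $\hat I\cap P^k$, and one may choose a subspace $\hat N^k\subset \hat I\cap P^k$ complementary to $\left.{\rm im}\,{\rm ad}^{\Sigma'}_{\hat f_0}\right|_{\hat I\cap P^k}$ inside $\hat I\cap P^k$.

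Finally, repeat the inductive Lie transformation from the proof of Theorem \ref{normalformtheorem} but require every generator $g_k$ to lie in $\hat I\cap P^k$. The decomposition above guarantees that the $k$-th homological equation $\hat f^{k-1}_k-{\rm ad}^{\Sigma'}_{\hat f_0}(g_k)\in\hat N^k$ is solvable inside $\hat I$, and because $\hat I$ is a subalgebra each Lie transformation $e^{{\rm ad}^{\Sigma'}_{g_k}}$ keeps us inside $\hat I$. After $r$ steps the normalized function $\overline f$ lies in $\hat I$, so in particular $\overline f\circ \lambda_{q'}\circ \pi_i=\overline f\circ \pi_i$, and since $p\circ \sigma_j=\sigma_{q'(j)}\circ p$ continues to hold, $(p,q')$ is a dynamical input symmetry of $\gamma_{\overline f}$. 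The main technical hurdle is the closure of $\hat I$ under $[\cdot,\cdot]_{\Sigma'}$: the bracket formula involves derivatives of its arguments, which are not controlled by the values of $f$ on $\bigcup_i{\rm im}\,\pi_i$ alone, and this is precisely why the averaging step is needed to upgrade the on-image invariance of $f$ to the genuine pointwise invariance of $\hat f$ on $V^{n'}$.
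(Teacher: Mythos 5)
Your proof is correct and rests on the same structural idea as the paper's --- run the Lie-transformation procedure of Theorem \ref{normalformtheorem} inside a Lie subalgebra of symmetric functions --- but the technical implementation is genuinely different. The paper works with the larger class $C^{\infty}_G(V^n,V)$ of functions satisfying only the \emph{on-image} invariance $g\circ\lambda_q\circ\pi_i=g\circ\pi_i$, and obtains closure under $[\cdot,\cdot]_{\Sigma}$ from Theorem \ref{invariancetheorem}; you instead upgrade to genuine pointwise invariance by averaging $f'$ over the powers of $\lambda_{q'}$ (which leaves $\gamma_f$ unchanged, as you verify) and then prove closure of the pointwise-invariant subalgebra $\hat I$ directly from the intertwining relation $A_{\sigma_j}\circ\lambda_{q'}=\lambda_{q'}\circ A_{\sigma_{q'(j)}}$; both that relation and your chain-rule identity check out against $q'(\widetilde\sigma_k(j))=\widetilde\sigma_{q'(k)}(q'(j))$. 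Your closing worry about derivatives is reasonable on its face but not actually fatal to the paper's weaker class: since $\gamma$ is a Lie algebra homomorphism and $\gamma_g$ depends only on the restriction of $g$ to $\bigcup_i{\rm im}\,\pi_i$, one has $[g\circ\lambda_q,h\circ\lambda_q]_{\Sigma}\circ\pi_i=[g,h]_{\Sigma}\circ\pi_i$ for $g,h\in C^{\infty}_G$, so the on-image class is closed after all; your averaging simply makes this closure transparent at the cost of one extra, harmless step. Two small points to tidy up: the corollary asks for \emph{all} dynamical input symmetries simultaneously, so you should average over the whole (finite) group $G'$ of extended input symmetries rather than the cyclic group of a single $q'$ --- the extension $q\mapsto q'$ is a group homomorphism by its uniqueness, so the same closure argument applies to the common invariant subalgebra; and choosing $\hat N^k$ complementary to ${\rm im}\,{\rm ad}^{\Sigma'}_{\hat f_0}$ only inside $\hat I\cap P^k$ rather than in all of $P^k$ is exactly what the paper does with $N^k_G\subset P^k_G$, so no further justification is needed there.
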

\begin{proof}
Let $G$ denote the group of dynamical input symmetries of $f = f_0+f_1+f_2+\ldots$ and let us define the set of $G$-invariant functions
$$C^{\infty}_G(V^n,V):= \{g\in C^{\infty}(V^n, V)\, | \, g\circ \lambda_q\circ \pi_i= g\circ\pi_i \ \mbox{for all}\ q\in G\  \mbox{and all}\ 1\leq i\leq N\}\, .$$
Theorem \ref{invariancetheorem} implies that if $g, h\in C^{\infty}_G(V^n,V)$, then also $$[g,h]_{\Sigma} \in C^{\infty}_G(V^n,V)\, .$$
The fact that $f\in C^{\infty}_G(V^n, V)$ moreover implies that
$$f_k\in P^k_G:=\{g_k\in P^k\, | \, g_k\circ \lambda_q \circ \pi_i= g_k \circ\pi_i\ \mbox{for all}\ q\in G\ \mbox{and all}\ 1\leq i\leq N\} $$ 
is a $G$-invariant polynomial of degree $k+1$. 

It clearly holds that
$[P^k_G, P^l_G]_{\Sigma}\subset P^{k+l}_G$. As a consequence, we can repeat the proof of Theorem \ref{normalformtheorem} by replacing $P^k$ by $P^k_G$ and choosing the normal form spaces $N^k_G\subset P^k_G$ so that 
$$\left.{\rm im\ ad}_{f_0}\right|_{P^k_G} \oplus N^k_G = P^k_G\, .$$ 
This produces a normal form $\overline f \in C^{\infty}_G(V^n,V)$.
\end{proof}

\begin{example}
 \label{funnyexample}
Consider the class of differential equations of the form
 \begin{align}\nonumber
\begin{array}{ll} 
\dot x_1 =& f(x_1,x_2,x_2,x_1)\, , \\
\dot x_2 =& f(x_1,x_2,x_1,x_2)\, .
\end{array}
\end{align}
These differential equations have a semigroup coupled cell network structure with $N=2$ and $n=4$, see Figure \ref{funnypicture}.
 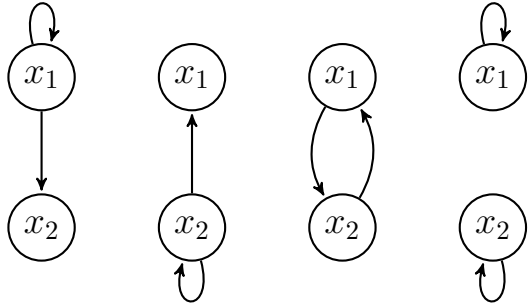
\begin{figure}[ht]\renewcommand{\figurename}{\rm \bf \footnotesize Figure}
\centering
\begin{tikzpicture}[->,>=stealth',shorten >=1pt,auto,node distance=2cm,
                    thick,main node/.style={circle,draw,font=\sffamily\Large\bfseries}]

  \node[main node] (1) {$x_1$};
  \node[main node] (2) [below of=1] {$x_2$};
  \node[main node] (3) [right of=1] {$x_1$};
  \node[main node] (4) [right of=2] {$x_2$};
  \node[main node] (5) [right of=3] {$x_1$};
  \node[main node] (6) [right of=4] {$x_2$};
\node[main node] (7) [right of=5] {$x_1$};
  \node[main node] (8) [right of=6] {$x_2$};
  \path[every node/.style={font=\sffamily\small}]
    (1) edge node {} (2)
 (1) edge [loop above] node {} (1)
(4) edge node {} (3)
 (4) edge [loop below] node {} (4)   
(5) edge  [bend right] node {} (6)
(6) edge  [bend right] node {} (5)
    (7) edge [loop above] node {} (7)
 (8) edge [loop below] node {} (8);
\end{tikzpicture}

\caption{\footnotesize {\rm The network with $N=2$ and $n=4$.}}
\label{funnypicture}
\end{figure}

\noindent The semigroup $\Sigma$ in this case is the full non-Abelian semigroup of maps on $2$ symbols. In other words, $\Sigma=\{\sigma_1, \sigma_2, \sigma_3, \sigma_4\}$, where 
$$\sigma_1(1) = 1, \sigma_1(2) = 1, \sigma_2(1) = 2, \sigma_2(2) = 2, \sigma_3(1) = 2, \sigma_3(2) = 1, \sigma_4(1) = 1, \sigma_4(2) = 2\, . $$
There is only one nontrivial permutation $p:\{1,2\}\to\{1,2\}$ of the cells, which is defined by $p(1):=2$ and $p(2):=1$. It is easily checked that 
$$p\circ \sigma_1 = \sigma_2\circ p,\ p\circ \sigma_2 = \sigma_1\circ p,\ p\circ \sigma_3 = \sigma_3\circ p,\ p\circ \sigma_4 = \sigma_4\circ p\, .$$
In other words, $p\circ \sigma_j=\sigma_{q(j)}\circ p$ if we let $q:\{1,2,3,4\}\to\{1,2,3,4\}$ be defined by 
$$q(1)=2, q(2)=1, q(3)=3, q(4)=4\, .$$ 
Thus, the (in this case identical) invariances 
$$f(x_1,x_2, x_2,x_1)=f(x_2,x_1,x_2,x_1), \ f(x_1,x_2, x_1,x_2)=f(x_2,x_1,x_1,x_2)$$
can be preserved in the normal form of $f$. These are precisely the invariances that make $\lambda_p:(x_1, x_2)\mapsto (x_2, x_1)$ a symmetry  of the differential equations.
\end{example}

\section{SN-decomposition}\label{SNsection}
We recall from the previous sections that when $f_0\in L(V^n,V)$, then ${\rm ad}^{\Sigma}_{f_0}:P^k\to P^k$. The operators ${\rm ad}^{\Sigma}_{f_0}|_{P^k}$ are called ``homological operators'' and they play an important role in normal form theory. This is first of all because the normal form spaces $N^k\subset P^k$ of Theorem \ref{normalformtheorem} must be chosen complementary to their images, and secondly because in computing a normal form one needs to ``invert'' them when solving the homological equations ${\rm ad}^{\Sigma}_{f_0}(g_k)-h_k\in N^k$, see the proof of Theorem \ref{normalformtheorem}. 

For this reason, it is convenient to have at one's disposal the ``SN-decompositions'' (also called ``Jordan-Chevalley decompositions'') of the homological operators \cite{sanderscushman}, \cite{murdock}, \cite{sanvermur}. We recall that, since $P^k$ is finite-dimensional, the map ${\rm ad}^{\Sigma}_{f_0}|_{P^k}$ admits a unique SN-decomposition
\begin{align}\label{SN}
{\rm ad}^{\Sigma}_{f_0}|_{P^k}=({\rm ad}^{\Sigma}_{f_0}|_{P^k})^S+({\rm ad}^{\Sigma}_{f_0}|_{P^k})^N\, 
\end{align}
in which the map $({\rm ad}^{\Sigma}_{f_0}|_{P^k})^S$ is semisimple, the map $({\rm ad}^{\Sigma}_{f_0}|_{P^k})^N$ is nilpotent and the two maps $({\rm ad}^{\Sigma}_{f_0}|_{P^k})^S$ and $({\rm ad}^{\Sigma}_{f_0}|_{P^k})^N$ commute. The aim of this somewhat technical section is to characterize this SN-decomposition in an as simple as possible way. We will do this in a number of steps, starting from the following technical result:
\begin{proposition}\label{injectivity}
Assume that $\Sigma$ is a semigroup. Then the linear map 
$$\gamma|_{L(V^n, V)}: L(V^n, V)\to L(V^N, V^N)\ \mbox{sending}\ f_0\ \mbox{to}\ \gamma_{f_0}\ \mbox{is injective}\ .$$
\end{proposition}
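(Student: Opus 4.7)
My plan is to translate the statement into a concrete linear independence question and then leverage the semigroup closure. A general linear $f_0 \in L(V^n, V)$ takes the form $f_0(X_1, \ldots, X_n) = \sum_{j=1}^n B_j X_j$ with $B_j \in L(V, V)$, and substituting into the definition of $\gamma_{f_0}$ one finds that $\gamma_{f_0}$ is the $N \times N$ block matrix of operators whose $(i, k)$ block equals $\sum_{\{j\,:\,\sigma_j(i) = k\}} B_j$. Hence the condition $\gamma_{f_0} = 0$ is equivalent to
\[
\sum_{\{j\,:\,\sigma_j(i) = k\}} B_j \;=\; 0 \qquad \text{for every } (i, k) \in \{1, \ldots, N\}^2,
\]
and the proposition reduces to the linear independence, in $\R^{N \times N}$, of the $n$ indicator matrices $M^{(j)}_{ik} = \delta_{\sigma_j(i), k}$ of the semigroup elements viewed as endomorphisms of $\{1, \ldots, N\}$.

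The plan is to exploit the ring homomorphism property from Theorem~\ref{composition}. If $\gamma_{f_0} = 0$ then $\gamma_{g \circ_\Sigma f_0} = \gamma_g \circ \gamma_{f_0} = 0$ for every linear $g$, so the kernel of $\gamma|_{L(V^n, V)}$ is a right ideal for the symbolic composition. Expanding $g \circ_\Sigma f_0$ via \eqref{alsocomposition} and choosing $g$ to be the projection onto a single coordinate, the action of the representation $A_{\sigma_j}$ from Theorem~\ref{Amaps} translates an initial relation $\sum c_j B_j = 0$ into its $\widetilde\sigma$-translates. The natural setting for collecting all these translates is the fundamental network of Section~\ref{coveringsection}, whose cells are indexed by $\{1, \ldots, n\}$ themselves and whose network maps are exactly the left-multiplication maps $\widetilde\sigma_j$ of \eqref{tildesigma}. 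I would then argue, using the covering relation $\sigma^i \circ \widetilde\sigma^j = \sigma^{\sigma_j(i)}$ from the remark after Theorem~\ref{composition}, that the original $\gamma$ factors through the fundamental $\gamma^{\text{fund}}$ and that injectivity of the latter suffices. At the fundamental level, for each target $j_0$ the existence of a cell $k \in \{1, \ldots, n\}$ with $j_0$ appearing exactly once among $\widetilde\sigma_1(k), \ldots, \widetilde\sigma_n(k)$ immediately isolates $B_{j_0}$ via the corresponding equation.

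The main obstacle will be producing such a ``separating'' cell $k$, which amounts to a right-cancellation statement in the finite semigroup $\Sigma$: given any $j_0$, find $k$ for which $\sigma_j \circ \sigma_k \neq \sigma_{j_0} \circ \sigma_k$ for all $j \neq j_0$. This is delicate because $\Sigma$ may contain strongly collapsing elements such as constant maps, and naive pre-composition can wipe out the distinction between two $\sigma_j$. The standard remedy is to choose $\sigma_k$ inside a suitable idempotent subsemigroup of $\Sigma$ (which exists in any finite semigroup) so that post-composition by $\sigma_k$ preserves enough information about the $\sigma_j$; combining several such separating cells then forces $B_j = 0$ for every $j$ and yields the desired injectivity.
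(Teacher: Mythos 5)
Your opening reduction is correct and is in substance the paper's own: writing $f_0(X)=\sum_j B_jX_j$, the condition $\gamma_{f_0}=0$ is exactly $\sum_{\{j:\sigma_j(i)=k\}}B_j=0$ for all $i,k$, so injectivity is equivalent to linear independence of the indicator matrices $M^{(j)}_{ik}=\delta_{\sigma_j(i),k}$ (the paper states the same thing dually, as ${\rm im}\,\pi_1+\ldots+{\rm im}\,\pi_N=V^n$). The genuine gap is the step you yourself flag: the ``separating cell'' does not exist in general, and no appeal to idempotents can produce it, because the proposition as stated is false. Take $\Sigma$ to be the full transformation semigroup on two symbols --- precisely the network of Example \ref{funnyexample}, with $N=2$, $n=4$, $\pi_1(x)=(x_1,x_2,x_2,x_1)$ and $\pi_2(x)=(x_1,x_2,x_1,x_2)$. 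The two constant maps $\sigma_1,\sigma_2$ together hit each cell exactly once, and so do the two bijections $\sigma_3,\sigma_4$; hence $M^{(1)}+M^{(2)}=M^{(3)}+M^{(4)}$ (both sides being the all-ones $2\times 2$ matrix), the indicator matrices are linearly dependent, and the nonzero linear map $f_0(X)=X_1+X_2-X_3-X_4$ satisfies $\gamma_{f_0}=0$ because it vanishes on ${\rm im}\,\pi_1+{\rm im}\,\pi_2$. So the kernel you are trying to show is trivial genuinely contains nonzero linear elements for some semigroups.

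You should not conclude that you merely failed to find the right trick: the paper's own proof breaks at the corresponding point. From non-surjectivity it derives a relation $\pi^j=\sum_{k\neq j}\lambda_k\pi^k$ and then asserts that ``it is clear that this can only be true if $\sigma_j=\sigma_k$ for some $k\neq j$''; the example above gives the nontrivial relation $\pi^1+\pi^2-\pi^3-\pi^4=0$ among four pairwise distinct maps, so that assertion is wrong. (The same example shows that the description of $\ker\gamma$ by quadratic generators in the example following Theorem \ref{SNtheorem} misses the linear element $X_1+X_2-X_3-X_4$.) What is true, and what a much simpler version of your argument proves, is the weaker statement that $\gamma|_{L(V^n,V)}$ is injective whenever some cell $i$ receives $n$ pairwise distinct inputs, i.e.\ $\sigma_1(i),\ldots,\sigma_n(i)$ are all different for some $i$: then the $i$-th rows of the $M^{(j)}$ are distinct standard basis vectors and each equation $\sum_{\{j:\sigma_j(i)=k\}}B_j=0$ has a single term. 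A correct general treatment must either add such a hypothesis or systematically work modulo $\ker\gamma$, as the paper in fact does in Theorem \ref{SNtheorem} and in the results (Lemma \ref{propsndecomposition} in particular) that cite this proposition.
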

\begin{proof}
The definition $(\gamma_{f_0})_i:=f_0\circ\pi_i$ implies that when $f_0$ is linear, then $\gamma_{f_0}=0$ precisely when $f_0$ vanishes on ${\rm im}\, \pi_1 + \ldots + {\rm im}\, \pi_N$. Thus, all we need to show is that $${\rm im}\ \pi_1 + \ldots + {\rm im}\ \pi_N = V^n\, .$$
Equivalently, we will show by contradiction that the map
$$`` \pi_1+ \ldots+ \pi_N " : (V^{N})^ N \to V^{n}\ , \ (x^{(1)}, \ldots, x^{(N)})\mapsto \pi_1(x^{(1)})+\ldots + \pi_N(x^{(N)})$$
must be surjective. To this end, let us define for $1\leq j\leq n$ the maps
$$\pi^j:(V^{N})^N\to V \ \mbox{by}\ \pi^j(x^{(1)}, \ldots, x^{(N)}) = (\pi_1(x^{(1)})+ \ldots+ \pi_N(x^{(N)}))_j = x^{(1)}_{\sigma_j(1)}+ \ldots+ x^{(N)}_{\sigma_j(N)}\, .$$
In other words, $\pi^j$ is $``\pi_1+\ldots+\pi_N"$ followed by the projection to the $j$-th factor of $V^n$. In particular, if $``\pi_1+\ldots+\pi_N"$ is not surjective, then there is a relation of the form 
$$\pi^j=\sum_{k\neq j}^n\lambda_k\pi^k\, .$$
This means that 
$$ x^{(1)}_{\sigma_j(1)}+ \ldots+ x^{(N)}_{\sigma_j(N)} = \sum_{k\neq j} \lambda_k\left( x^{(1)}_{\sigma_k(1)}+ \ldots+ x^{(N)}_{\sigma_k(N)}\right) \ \mbox{for all}\ x^{(1)}, \ldots, x^{(N)}\in V^N\, .$$
It is clear that this can only be true if $\sigma_j= \sigma_k$ for some $k\neq j$. This is a contradiction, because we assumed that the elements of $\Sigma$ are distinct.
\end{proof}
Next, we recall that when $f_0\in L(V^n, V)$, then $\gamma_{f_0}\in L(V^N, V^N)$ and thus also the latter has an SN-decomposition
$$\gamma_{f_0}=\gamma_{f_0}^S+\gamma_{f_0}^N$$ 
for certain $\gamma_{f_0}^S, \gamma_{f_0}^N\in L(V^N,V^N)$. We are going to relate the SN-decomposition of $\gamma_{f_0}$ to that of ${\rm ad}^{\Sigma}_{f_0}|_{P^k}$. But first we show that both $\gamma_{f_0}^S$ and $\gamma_{f_0}^N$ are coupled cell network maps:
\begin{lemma}\label{propsndecomposition}
For every $f_0\in L(V^n, V)$ there exist unique $f_0^S, f_0^N\in L(V^n,V)$ so that
$$\gamma_{f_0}^S=\gamma_{f_0^S}\ \mbox{and} \ \gamma_{f_0}^N=\gamma_{f_0^N}\, .$$
It holds that $f_0=f_0^S+f_0^N$ and that $[f_0^S, f_0^N]_{\Sigma}=0$. In particular ${\rm ad}^{\Sigma}_{f_0^S} \circ {\rm ad}^{\Sigma}_{f_0^N} = {\rm ad}^{\Sigma}_{f_0^N} \circ {\rm ad}^{\Sigma}_{f_0^S}$.
\end{lemma}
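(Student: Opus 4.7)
The plan is to leverage three previously established facts in sequence: (i) the Jordan–Chevalley decomposition of a linear endomorphism of a finite-dimensional real vector space expresses both the semisimple and nilpotent parts as polynomials \emph{without constant term} in the original operator; (ii) the space $\{\gamma_{f_0}\mid f_0\in L(V^n,V)\}$ of linear network vector fields is closed under composition by Theorem \ref{composition}; and (iii) the linear map $\gamma|_{L(V^n,V)}$ is injective by Proposition \ref{injectivity}. Together these facts should pin down $f_0^S$ and $f_0^N$ immediately.

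First I would verify that if $f_0\in L(V^n,V)$ then also $f_0\circ_{\Sigma} f_0\in L(V^n,V)$: from the defining formula (\ref{alsocomposition}) the inner factor $(f_0\circ A_{\sigma_1})\times\cdots\times(f_0\circ A_{\sigma_n})$ is a linear map $V^n\to V^n$, and post-composing with a linear $f_0$ keeps linearity. By induction every power $\gamma_{f_0}^k$ equals $\gamma_{g_k}$ for some $g_k\in L(V^n,V)$, so any polynomial in $\gamma_{f_0}$ without constant term lies in the image of $\gamma|_{L(V^n,V)}$. Applying this to the Jordan–Chevalley polynomials that produce $\gamma_{f_0}^S$ and $\gamma_{f_0}^N$ yields elements $f_0^S,f_0^N\in L(V^n,V)$ with $\gamma_{f_0^S}=\gamma_{f_0}^S$ and $\gamma_{f_0^N}=\gamma_{f_0}^N$; uniqueness is then immediate from the injectivity statement in Proposition \ref{injectivity}. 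The equality $f_0=f_0^S+f_0^N$ follows by applying injectivity to $\gamma_{f_0^S+f_0^N}=\gamma_{f_0}^S+\gamma_{f_0}^N=\gamma_{f_0}$.

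For the commutation $[f_0^S,f_0^N]_{\Sigma}=0$, I would observe that (\ref{defnormalbracket}) collapses to the operator commutator whenever the two fields are linear, so $[\gamma_{f_0^S},\gamma_{f_0^N}]=\gamma_{f_0}^S\gamma_{f_0}^N-\gamma_{f_0}^N\gamma_{f_0}^S=0$ since the semisimple and nilpotent parts of $\gamma_{f_0}$ commute. Theorem \ref{brackettheorem} rewrites this as $\gamma_{[f_0^S,f_0^N]_{\Sigma}}=0$; inspection of (\ref{bracketdef}) shows that $[f_0^S,f_0^N]_{\Sigma}$ is again linear (each $D_j f_0^{S/N}$ is constant, each $f_0^{S/N}\circ A_{\sigma_j}$ linear), hence injectivity forces $[f_0^S,f_0^N]_{\Sigma}=0$. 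The final assertion on ${\rm ad}^{\Sigma}_{f_0^S}\circ{\rm ad}^{\Sigma}_{f_0^N}={\rm ad}^{\Sigma}_{f_0^N}\circ{\rm ad}^{\Sigma}_{f_0^S}$ is then just the Jacobi identity of the Lie algebra structure of Lemma \ref{bracketlemma}, which implies that $[X,Y]_\Sigma=0$ forces $[{\rm ad}^{\Sigma}_X,{\rm ad}^{\Sigma}_Y]=0$.

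The only genuinely subtle point is the first one: invoking the fact that the $S$- and $N$-parts arise as polynomials in $\gamma_{f_0}$ without constant term. Without that, one would not know a priori that $\gamma_{f_0}^S$ and $\gamma_{f_0}^N$ sit inside the (non-unital) subalgebra $\gamma(L(V^n,V))\subset L(V^N,V^N)$; with it, everything reduces to the two prior theorems and is essentially routine.
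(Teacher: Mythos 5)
Your proposal is correct and follows essentially the same route as the paper: express $\gamma_{f_0}^S$ and $\gamma_{f_0}^N$ as polynomials in $\gamma_{f_0}$, pull them back through the homomorphism $\gamma$ using Theorem \ref{composition}, and invoke the injectivity of $\gamma|_{L(V^n,V)}$ from Proposition \ref{injectivity} for uniqueness, for $f_0=f_0^S+f_0^N$, and for $[f_0^S,f_0^N]_{\Sigma}=0$, with the final commutation coming from the Jacobi identity. Your explicit attention to the polynomials having no constant term (and to $L(V^n,V)$ being closed under $\circ_{\Sigma}$) is a welcome clarification of a point the paper's own proof states somewhat loosely, but it is not a different argument.
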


\begin{proof}
We recall - see for instance \cite{humphreys}, pp. 17 - that both the semisimple part $\gamma_{f_0}^S$ and the nilpotent part $\gamma_{f_0}^N$ of $\gamma_{f_0}$ are polynomial functions of $\gamma_{f_0}$. More precisely, $\gamma_{f_0}^{S}=p(\gamma_{f_0})$ and $\gamma_{f_0}^N=\gamma_{f_0}-p(\gamma_{f_0})$, where 
$$p(\gamma)= a_0I + a_1\gamma + \ldots + a_d \gamma^d$$ 
is a polynomial with coefficients $a_0, \ldots, a_d\in\C$.

Theorem \ref{composition} then implies that $\gamma_{f_0}^S=p(\gamma_{f_0}) = \gamma_{p(f_0)}= \gamma_{f_0^S}$ for $f_0^S\in L(V^n,V)$ defined as
\begin{align}\label{polynomial}
f_0^S=p(f_0)=a_0f_0 +a_1(f_0\circ_{\Sigma} f_0) + \ldots + a_d (f_0\circ_{\Sigma} \ldots \circ_{\Sigma} f_0) \ .
\end{align} 
By Lemma \ref{associative} this $f_0^S$ is well-defined, while by Proposition \ref{injectivity} it is unique. Similarly, $\gamma_{f_0}^N=\gamma_{f_0^N}$ for a well-defined and unique $f_0^N= f_0-p(f_0)\in L(V^n,V)$. Clearly, $f_0=f_0^S+f_0^N$.

Because $[f_0^S, f_0^N]_{\Sigma} \in L(V^n, V)$ and 
$$\gamma_{[f_{0}^S, f_{0}^N]_{\Sigma} } = [\gamma_{f_0^S}, \gamma_{f_0^N}] = [\gamma_{f_0}^S, \gamma_{f_0}^N] =  0$$
it follows from Proposition \ref{injectivity} that $[f_{0}^S, f_0^N]_{\Sigma}=0$. The Jacobi identity 
$$[{\rm ad}^{\Sigma}_{f_0^S}, {\rm ad}^{\Sigma}_{f_0^N}] = {\rm ad}^{\Sigma}_{[f_0^S,f_0^N]_{\Sigma}}$$ 
then confirms that ${\rm ad}^{\Sigma}_{f_0^S}$ and ${\rm ad}^{\Sigma}_{f_0^N}$ commute as operators on $C^{\infty}(V^n, V)$. 
\end{proof}

\noindent Before we come to the desired characterization of the SN-decomposition of ${\rm ad}_{f_0}^{\Sigma}|_{P^k}$, we need to make one simple observation. It concerns the fact that two functions $f, g\in C^{\infty}(V^n, V)$ generate the same network map (in the sense that $\gamma_f=\gamma_g$) if and only if $f-g\in \ker \gamma =\{h\, | \, \gamma_h=0\} \subset C^{\infty}(V^n, V)$. Thus, $\ker \gamma$ consists of those functions $h:V^n\to V$ that are irrelevant for the dynamics of coupled cell networks.
  
One can remark that when $f, g\in C^{\infty}(V^n, V)$ and $f\in \ker \gamma$, then $\gamma_{[f,g]_{\Sigma}}=[\gamma_f, \gamma_g]=0$ and hence also $[f, g]_{\Sigma}\in\ker \gamma$. This means $\ker \gamma\subset C^{\infty}(V^n, V)$ is a Lie algebra ideal. In particular it holds for every $f\in C^{\infty}(V^n, V)$ that the adjoint map ${\rm ad}^{\Sigma}_f: C^{\infty}(V^n, V)\to C^{\infty}(V^n, V)$ sends $\ker \gamma$ to $\ker \gamma$ and hence that ${\rm ad}_{f}^{\Sigma}$ descends to a well-defined map on $C^{\infty}(V^n, V)/\ker \gamma$. We can now formulate our result:

\begin{theorem}\label{SNtheorem}
For every $k=0,1,2,\ldots$ the maps
$$ ({\rm ad}^{\Sigma}_{f_0}|_{P^k})^S \ \mbox{and}\ \left. {\rm ad}^{\Sigma}_{f_0^S}\right|_{P^k} \ \mbox{respectively}\  ({\rm ad}^{\Sigma}_{f_0}|_{P^k})^N \ \mbox{and}\ \left.{\rm ad}^{\Sigma}_{f_0^N}\right|_{P^k}$$
descend to the same map on $P^k /\ker \gamma$. 
\end{theorem}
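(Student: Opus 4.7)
The plan is to reduce the statement to the uniqueness of the SN-decomposition on the finite-dimensional quotient $P^k/\ker\gamma$. First I would observe, using that $\ker\gamma$ is a Lie algebra ideal in $C^{\infty}(V^n,V)$ (as noted immediately before the theorem), that ${\rm ad}^{\Sigma}_{f_0}|_{P^k}$, ${\rm ad}^{\Sigma}_{f_0^S}|_{P^k}$ and ${\rm ad}^{\Sigma}_{f_0^N}|_{P^k}$ all preserve the finite-dimensional subspace $\ker\gamma \cap P^k$, and therefore descend to operators on the finite-dimensional quotient $P^k/\ker\gamma$. Because $({\rm ad}^{\Sigma}_{f_0}|_{P^k})^S$ and $({\rm ad}^{\Sigma}_{f_0}|_{P^k})^N$ are themselves polynomials in ${\rm ad}^{\Sigma}_{f_0}|_{P^k}$, they too preserve $\ker\gamma \cap P^k$; and since semisimplicity and nilpotency pass to invariant quotients, the descended $S$-part is semisimple while the descended $N$-part is nilpotent.

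Next I would identify the descents of ${\rm ad}^{\Sigma}_{f_0^S}|_{P^k}$ and ${\rm ad}^{\Sigma}_{f_0^N}|_{P^k}$ via the target side. Write $Q^k \subset C^{\infty}(V^N, V^N)$ for the space of homogeneous polynomial vector fields of degree $k+1$ on $V^N$. By Theorem \ref{brackettheorem} together with Lemma \ref{propsndecomposition}, the linear map $\gamma$ intertwines ${\rm ad}^{\Sigma}_{f_0^{S/N}}$ with ${\rm ad}_{\gamma_{f_0^{S/N}}} = {\rm ad}_{\gamma_{f_0}^{S/N}}$, and it induces a well-defined injection $\overline\gamma : P^k/\ker\gamma \hookrightarrow Q^k$ whose image is invariant under both ${\rm ad}_{\gamma_{f_0}^S}|_{Q^k}$ and ${\rm ad}_{\gamma_{f_0}^N}|_{Q^k}$. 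Thus it suffices to verify semisimplicity of the first and nilpotency of the second on this invariant subspace of $Q^k$.

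This is where I would appeal to the classical fact from normal form theory that, for any $A \in L(V^N, V^N)$, semisimplicity (resp.\ nilpotency) of $A$ implies semisimplicity (resp.\ nilpotency) of the induced operator ${\rm ad}_{A}$ on every $Q^k$. Applied to $\gamma_{f_0}^S$ and $\gamma_{f_0}^N$, this produces semisimplicity and nilpotency of the two relevant operators on $Q^k$; and these properties restrict to the invariant subspace ${\rm im}\,\overline\gamma$. Together with the commutativity of ${\rm ad}^{\Sigma}_{f_0^S}$ and ${\rm ad}^{\Sigma}_{f_0^N}$ (already present on $P^k$ by Lemma \ref{propsndecomposition}) and the fact that they sum to ${\rm ad}^{\Sigma}_{f_0}|_{P^k}$, uniqueness of the SN-decomposition on $P^k/\ker\gamma$ forces both identifications claimed in the theorem. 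The main obstacle I anticipate is the transfer of \emph{semisimplicity} across $\overline\gamma$: unlike nilpotency, which follows from a ``some power is zero'' condition in either direction, semisimplicity only passes from $Q^k$ down to $P^k/\ker\gamma$ because $\overline\gamma$ realizes the latter as an \emph{invariant} subspace of $Q^k$, and it is precisely here that the Lie-algebra-homomorphism property of $\gamma$ (i.e.\ the semigroup assumption on $\Sigma$) is genuinely used.
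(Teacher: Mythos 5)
Your proposal is correct and follows essentially the same route as the paper: descend all operators to $P^k/\ker\gamma$, use the injectivity of $\gamma$ on this quotient to conjugate them into operators on polynomial vector fields over $V^N$, invoke the classical fact that ${\rm ad}_{\gamma_{f_0}}$ there has SN-decomposition ${\rm ad}_{\gamma_{f_0}^S}+{\rm ad}_{\gamma_{f_0}^N}$, and conclude by uniqueness of the SN-decomposition on the quotient (the paper phrases your first step as ``the SN-decomposition of the quotient is the quotient of the SN-decomposition''). Your extra care about transferring semisimplicity along $\overline\gamma$ via invariance of its image is a point the paper leaves implicit, but it is the same argument.
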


\begin{proof} We start by repeating that ${\rm ad}_{f_0}^{\Sigma}$ maps $\ker \gamma$ into itself and hence descends to a well-defined map on $C^{\infty}(V^n, V)/\ker \gamma$ that moreover sends $P^k/\ker \gamma$ into itself. More interestingly, since $\left({\rm ad}_{f_0}^{\Sigma}|_{P^k}\right)^S$ and $\left({\rm ad}_{f_0}^{\Sigma}|_{P^k}\right)^N$ are polynomial functions of ${\rm ad}_{f_0}^{\Sigma}|_{P^k}$, also these latter maps send $\ker \gamma$ into itself and thus descend to $P^k/\ker \gamma$.

For the actual proof of the theorem, we define for $k\geq 0$ the vector spaces
$$\mathcal{P}^k:=\{\mbox{homogeneous polynomial vector fields on}\ V^N\ \mbox{of degree}\ k+1\}\, .$$
It is clear that $\gamma: P^k/\ker \gamma \to \mathcal{P}^k$ is an injective linear map. Moreover, the computation 
$$(\gamma\circ {\rm ad}^{\Sigma}_{f_0})(g_k) = \gamma_{[f_0, g_k]_{\Sigma}} = [\gamma_{f_0}, \gamma_{g_k}]= ({\rm ad}_{\gamma_{f_0}}\circ \gamma)(g_k)$$
reveals that the maps ${\rm ad}^{\Sigma}_{f_0}: P^k/\ker \gamma\to P^k/\ker \gamma$ and ${\rm ad}_{\gamma_{f_0}}:\mathcal{P}^k\to \mathcal{P}^k$ are conjugate by the map $\gamma: P^k/\ker \gamma \to\mathcal{P}^k$. Similarly, ${\rm ad}^{\Sigma}_{f_0^S}$ is conjugate to ${\rm ad}_{\gamma_{f_0^S}}$ and
${\rm ad}^{\Sigma}_{f_0^N}$ is conjugate to ${\rm ad}_{\gamma_{f_0^N}}$. Now we recall the well-known fact that the SN-decomposition of ${\rm ad}_{\gamma_{f_0}}|_{\mathcal{P}^k}$ is
$${\rm ad}_{\gamma_{f_0}}|_{\mathcal{P}^k} = \left({\rm ad}_{\gamma_{f_0}}|_{\mathcal{P}^k}\right)^S + \left({\rm ad}_{\gamma_{f_0}}|_{\mathcal{P}^k}\right)^N 
 = \left.{\rm ad}_{\gamma_{f_0}^S}\right|_{\mathcal{P}^k}  + \left. {\rm ad}_{\gamma_{f_0}^N}\right|_{\mathcal{P}^k} =
 \left.{\rm ad}_{\gamma_{f_0^S}}\right|_{\mathcal{P}^k} + \left.{\rm ad}_{\gamma_{f_0^N}}\right|_{\mathcal{P}^k}\, .$$ 
Because $\gamma$ is injective, we have thus proved that 
$$\left.{\rm ad}^{\Sigma}_{f_0}\right|_{P^k} = \left. {\rm ad}^{\Sigma}_{f_0^S}\right|_{P^k}  + \left.{\rm ad}^{\Sigma}_{f_0^N}\right|_{P^k} : P^k/\ker \gamma\to P^k/\ker \gamma$$
is the SN-decomposition of the quotient map. Because the SN-decomposition of the quotient is the quotient of the SN-decomposition, this proves the theorem.
\end{proof}
\noindent Because the elements of $\ker \gamma$ are dynamically completely irrelevant, for all practical purposes we can think of Theorem \ref{SNtheorem} as saying that 
$$``\, \left.{\rm ad}^{\Sigma}_{f_0}\right|_{P^k} = \left.{\rm ad}^{\Sigma}_{f_0^S}\right|_{P^k} + \left.{\rm ad}^{\Sigma}_{f_0^N}\right|_{P^k}\ \mbox{is the SN-decomposition of}\ \left.{\rm ad}_{f_0}^{\Sigma}\right|_{P^k}\, "\, .$$ 
This is very convenient, because it means that one can determine the SN-decompositions of all operators $\left.{\rm ad}_{f_0}^{\Sigma}\right|_{P^k}$ simultaneously by simply determining the splitting $f_0=f_0^S+f_0^N$.
 
\begin{example} Even though by Proposition \ref{injectivity} the restriction $\gamma|_{L(V^n, V)}$ is injective, the full map $\gamma:C^{\infty}(V^n, V)\to C^{\infty}(V^N, V^N)$ may fail to be so. This situation occurs when $\cup_{i=1}^N{\rm im}\, \pi_i \neq V^n$, because $\gamma_f=0$ already when $f$ vanishes on every ${\rm im}\, \pi_i$. This is the reason for the somewhat difficult formulation of Theorem \ref{SNtheorem}.

To illustrate this phenomenon, we refer to Example \ref{funnyexample} in which 
$$\pi_1(x_1,x_2)=(x_1,x_2,x_2,x_1) \ \mbox{and}\ \pi_2(x_1,x_2)=(x_1,x_2,x_1,x_2)\, ,$$ 
so that in particular ${\rm im}\, \pi_1 \cup {\rm im} \, \pi_2 \neq V^4$. When for instance $V=\R$ in this example, then $\ker \gamma\subset C^{\infty}(\R^4, \R)$ is the ideal generated by
$$ (X_1- X_4)(X_1-X_3), (X_1-X_4)(X_2-X_4), (X_2-X_3)(X_1-X_3)\ \mbox{and} \ (X_2-X_3)(X_2-X_4)\ .$$
\end{example}

\noindent We conclude this section with the following dynamical implication of Theorem \ref{SNtheorem}:
\begin{corollary}
Let $0\leq r<\infty$. Then it can be arranged that the normal form $\overline f=f_0+\overline f_1+\ldots \in C^{\infty}(V^n, V)$ of an $f=f_0+\overline f_1+\ldots  \in C^{\infty}(V^n, V)$ has the special property that the truncated normal form coupled cell map/vector field
$\gamma_{f_0+\overline f_1 + \ldots + \overline f_r}$
commutes with the continuous family of maps $$t\mapsto e^{t\gamma_{f_0^S}}\, .$$
\end{corollary}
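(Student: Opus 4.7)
The plan is to reduce the flow-commutation statement to an algebraic condition on the normalized Taylor coefficients, and then choose the normal form spaces $N^k$ of Theorem \ref{normalformtheorem} so as to automatically produce that condition.

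First I would observe that the time-$t$ flow $e^{t\gamma_{f_0^S}}$ commutes with a vector field $\gamma_h$ on $V^N$ precisely when the Lie bracket $[\gamma_{f_0^S},\gamma_h]$ vanishes. By Theorem \ref{brackettheorem}, this is equivalent to $[f_0^S,h]_\Sigma\in\ker\gamma$. Applied to $h=f_0+\overline f_1+\ldots+\overline f_r$, the degree-zero term contributes $[f_0^S,f_0]_\Sigma=[f_0^S,f_0^S+f_0^N]_\Sigma=[f_0^S,f_0^N]_\Sigma=0$ by Lemma \ref{propsndecomposition}. So it suffices to arrange that $\mathrm{ad}^\Sigma_{f_0^S}(\overline f_k)\in\ker\gamma$ for each $1\le k\le r$.

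Next I would construct normal form spaces $N^k\subset P^k$ that (i) satisfy the direct-sum decomposition required by Theorem \ref{normalformtheorem}, and (ii) additionally satisfy $\mathrm{ad}^\Sigma_{f_0^S}(N^k)\subset K^k:=\ker\gamma\cap P^k$. The construction takes place in the quotient $Q^k:=P^k/K^k$, where by Theorem \ref{SNtheorem} the induced operator $\bar T:=\overline{\mathrm{ad}^\Sigma_{f_0}|_{P^k}}$ has SN-decomposition $\bar T=\bar S+\bar N$ with $\bar S=\overline{\mathrm{ad}^\Sigma_{f_0^S}|_{P^k}}$ and $\bar N=\overline{\mathrm{ad}^\Sigma_{f_0^N}|_{P^k}}$. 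Since $\bar S$ is semisimple and commutes with $\bar N$, the nilpotent $\bar N$ preserves the splitting $Q^k=\ker\bar S\oplus\mathrm{im}\,\bar S$, giving $\mathrm{im}\,\bar T=\bar N(\ker\bar S)\oplus\mathrm{im}\,\bar S$. Therefore any complement $U$ of $\bar N(\ker\bar S)$ inside $\ker\bar S$ is also a complement of $\mathrm{im}\,\bar T$ in $Q^k$, and tautologically $U\subset\ker\bar S$.

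To lift $U$ to an honest complement $N^k$ of $\mathrm{im}\,\mathrm{ad}^\Sigma_{f_0}|_{P^k}$ in $P^k$, I would first pick a linear section of the quotient $P^k\to Q^k$ and take $N_1^k$ to be the image of $U$ under that section. Then I would pick a complement $N_2^k$ of $K^k\cap\mathrm{im}\,\mathrm{ad}^\Sigma_{f_0}|_{P^k}$ inside $K^k$, and set $N^k:=N_1^k\oplus N_2^k$. A short diagram chase using the exact sequence $0\to K^k/(K^k\cap\mathrm{im}\,\mathrm{ad}^\Sigma_{f_0})\to P^k/\mathrm{im}\,\mathrm{ad}^\Sigma_{f_0}\to Q^k/\mathrm{im}\,\bar T\to 0$ verifies that $N^k\oplus\mathrm{im}\,\mathrm{ad}^\Sigma_{f_0}|_{P^k}=P^k$. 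Moreover, $\mathrm{ad}^\Sigma_{f_0^S}(N_1^k)\subset K^k$ because the image of $N_1^k$ in $Q^k$ lies in $\ker\bar S$, and $\mathrm{ad}^\Sigma_{f_0^S}(N_2^k)\subset K^k$ because $K^k$ is invariant under every $\mathrm{ad}^\Sigma_h$ (since $\gamma$ is a Lie homomorphism and $\gamma|_{K^k}=0$). Hence $\mathrm{ad}^\Sigma_{f_0^S}(N^k)\subset K^k$, as wanted.

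Finally I would invoke Theorem \ref{normalformtheorem} with these $N^k$ to produce $\overline f=f_0+\overline f_1+\ldots$ with $\overline f_k\in N^k$, which by the reduction in the first step yields $[\gamma_{f_0^S},\gamma_{f_0+\overline f_1+\ldots+\overline f_r}]=0$ and hence the desired commutation with the one-parameter family $t\mapsto e^{t\gamma_{f_0^S}}$. The main obstacle I anticipate is the one handled in the third paragraph: because the SN-decomposition of $\mathrm{ad}^\Sigma_{f_0}|_{P^k}$ is only available after passing to the quotient $P^k/\ker\gamma$, the complement $N^k$ must be built in two pieces and glued together with a care that a naive lift would not survive.
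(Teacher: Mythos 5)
Your proof is correct and follows essentially the same strategy as the paper's: reduce the commutation statement to ${\rm ad}^{\Sigma}_{f_0^S}(\overline f_k)\in\ker\gamma$ for $1\leq k\leq r$, and then choose the normal form spaces $N^k$ inside (a lift of) the kernel of the semisimple part of the homological operator, using Theorem \ref{SNtheorem} and the standard fact that a complement of $\bar N(\ker \bar S)$ inside $\ker\bar S$ complements the image of $\bar S+\bar N$. The only difference is bookkeeping: the paper imposes its condition (\ref{kerim}) directly on $P^k$ via the honest SN-decomposition of ${\rm ad}^{\Sigma}_{f_0}|_{P^k}$ and only afterwards passes to $P^k/\ker\gamma$, whereas you construct the complement in the quotient and lift it in two pieces --- a more explicit but equivalent route.
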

\begin{proof} 
Recall that each one of the normal form spaces $N^k \subset P^k$ of Theorem \ref{normalformtheorem} is required to have the property that $N^k\oplus\left. {\rm im \ ad}^{\Sigma}_{f_0}\right|_{P^k} = P^k$. It is not hard to see that whenever
\begin{align}\label{kerim}
N^k\subset \ker\, ({\rm ad}^{\Sigma}_{f_0}|_{P^k})^S\  \mbox{is complementary to}\,  \ {\rm im\, (ad}^{\Sigma}_{f_0}|_{P^k})^N
\end{align}
then this condition is fulfilled. Thus, let us choose $N^k$ so that it satisfies (\ref{kerim}). The fact that $\left({\rm ad}_{f_0^S}|_{P^k}\right)^S$ and $\left.{\rm ad}_{f_0^S}\right|_{P^k}$ descend to the same map on $P^k/\ker \gamma$ implies in particular that for such $N^k$ it holds that ${\rm ad}_{f_0^S}(N^k)\subset \ker \gamma$.

Let now $\overline f= f_0+\overline f_1+ \overline f_2 + \ldots$ be any normal form of $f$ of order $r$ with respect to the $N^k$, meaning that $\overline f_k\in N^k$ for all $1\leq k\leq r$. Such a normal form exists by Theorem \ref{normalformtheorem}.
Then it holds that ${\rm ad}^{\Sigma}_{f_0^S}(\overline f_k)\in \ker \gamma$ and in view of Theorem \ref{composition} we therefore have 
\begin{align}\nonumber
[\gamma_{f_0^S}, \gamma_{f_0+ \overline f_1 \ldots +\overline f_r}]= 
\gamma_{[f_0^S, f_0+ \overline f_1 + \ldots+\overline f_r]_{\Sigma}} = \gamma_{{\rm ad}^{\Sigma}_{f_0^S}(f_0+ \overline f_1+\ldots + \overline f_r)} =0\, .
\end{align}
Hence, $\left. \frac{d}{dt}\right|_{t=0}(e^{t\gamma_{f_0^S}})_*( \gamma_{f_0+ \overline f_1 \ldots +\overline f_r})  = 0$ and thus the truncated normal form  
commutes with the flow $t\mapsto e^{t\gamma_{f_0^S}}$ of the coupled cell network vector field $\gamma_{f_0^S}$. 
\end{proof}
The continuous family 
$$t\mapsto e^{t\gamma_{f_0^S}}$$ 
of transformations of $V^N$ is called a {\it normal form symmetry}. This symmetry is sometimes used to characterize vector fields that are in normal form. It also plays an important role in finding periodic solutions near equilibria of the vector field $\gamma_f$, using for example the method of Lyapunov-Schmidt reduction \cite{duisbif},  \cite{constrainedLS}, \cite{golschaef1}, \cite{perspective}, \cite{golschaef2}.
 
\section{A fundamental semigroup network}\label{coveringsection}
As a byproduct of Theorem \ref{Amaps}, and perhaps as a curiosity, we will show in this section that the dynamics of $\gamma_f$ on $V^N$ is conjugate to the dynamics of a certain network $\Gamma_f$ on $V^n$. We will argue that $\Gamma_f$ acts as a ``fundamental network'' for $\gamma_f$.

We recall that if $\Sigma=\{\sigma_1, \ldots, \sigma_n\}$ is a semigroup, then every $\sigma_j\in \Sigma$ induces a map 
$$\widetilde \sigma_j: \{1,\ldots, n\}\to\{1, \ldots, n\}\ \mbox{via the formula}\ \sigma_{\widetilde \sigma_j(k)} = \sigma_k \circ \sigma_j\, .$$
We saw that $\widetilde{\sigma_{j_1}\circ\sigma_{j_2}} =\widetilde \sigma_{j_1}\circ \widetilde \sigma_{j_2}$ and hence the collection $\widetilde \Sigma:=\{\widetilde \sigma_1, \ldots, \widetilde \sigma_n\}$ is closed under composition. One can now study coupled cell networks subject to $\widetilde \Sigma$. They have the form
$$\Gamma_f:V^n\to V^n\ \mbox{with} \ (\Gamma_{f})_j(X) := f(X_{\widetilde \sigma_1(j)}, \ldots, X_{\widetilde \sigma_n(j)}) = f(A_{\sigma_j}X) \ \mbox{for all} \ 1\leq j\leq n\, .$$
The following theorem demonstrates that $\gamma_f$ and $\Gamma_f$ are dynamically related:
\begin{theorem}\label{conjugation}
All maps $\pi_i:V^N\to V^n$ conjugate $\gamma_f$ to $\Gamma_f$, that is 
$$\Gamma_f\circ \pi_i = \pi_i\circ \gamma_f\  \ \mbox{for all} \ 1\leq i\leq N \, .$$
\end{theorem}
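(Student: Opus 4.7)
The plan is to verify the identity componentwise, using Theorem \ref{Amaps} as the one nontrivial input. Fix $1 \leq i \leq N$ and $1 \leq j \leq n$, and expand both sides of the claimed equation in the $j$-th slot.

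For the right-hand side, I first note that $\pi_i$ selects the coordinates indexed by $\sigma_1(i), \ldots, \sigma_n(i)$, so
\[
(\pi_i \circ \gamma_f)_j(x) \;=\; (\gamma_f(x))_{\sigma_j(i)} \;=\; f(\pi_{\sigma_j(i)}(x)),
\]
where the last equality is just the definition (\ref{networkdef}) of $\gamma_f$ at the cell $\sigma_j(i)$.

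For the left-hand side, the definition of $\Gamma_f$ gives $(\Gamma_f(Y))_j = f(A_{\sigma_j} Y)$ for $Y \in V^n$, so specializing to $Y = \pi_i(x)$ yields
\[
(\Gamma_f \circ \pi_i)_j(x) \;=\; f\bigl(A_{\sigma_j}(\pi_i(x))\bigr).
\]
Now I invoke Theorem \ref{Amaps}, whose central identity is precisely $A_{\sigma_j} \circ \pi_i = \pi_{\sigma_j(i)}$. Substituting into the previous display gives $f(\pi_{\sigma_j(i)}(x))$, matching the right-hand side. Since $i$ and $j$ were arbitrary, this establishes the theorem.

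There is really no obstacle here beyond bookkeeping: the content of the result is entirely contained in Theorem \ref{Amaps}, which says that the linear map $A_{\sigma_j}$ on $V^n$ encodes the effect of pre-composing the inputs of cell $i$ by $\sigma_j$. Once that identity is available, the conjugation relation $\Gamma_f \circ \pi_i = \pi_i \circ \gamma_f$ is a one-line consequence. The conceptual point worth emphasizing in the writeup is the interpretation: the $n$-cell network $\Gamma_f$ on $V^n$ restricts, via each semiconjugacy $\pi_i$, to the dynamics of $\gamma_f$ on the image $\mathrm{im}\,\pi_i \subset V^n$, which is why $\Gamma_f$ deserves to be called a ``fundamental network'' for $\gamma_f$.
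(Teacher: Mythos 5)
Your proof is correct and follows exactly the paper's argument: both compute the $j$-th component of each side and reduce the identity to $A_{\sigma_j}\circ\pi_i=\pi_{\sigma_j(i)}$ from Theorem \ref{Amaps}. No differences worth noting.
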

\begin{proof}
For $x\in V^N$ we have that 
$$(\Gamma_f\circ\pi_i)_j(x)=f((A_{\sigma_j}\circ \pi_i)(x)) =f(\pi_{\sigma_j(i)}(x)) = (\gamma_f(x))_{\sigma_j(i)} = (\pi_i\circ\gamma_f)_j(x)\, .$$
\end{proof}
Theorem \ref{conjugation} implies that every $\pi_i$ sends integral curves of $\gamma_f$ to integral curves of $\Gamma_f$ and discrete-time orbits of $\gamma_f$ to discrete-time orbits of $\Gamma_f$.

In addition, the dynamics of $\gamma_f$ can be reconstructed from the dynamics of $\Gamma_f$. More precisely, when $X_{(i)}(t)$ are integral curves of $\Gamma_f$ with $X_{(i)}(0)=\pi_i(x(0))$, then an integral curve $x(t)$ of $\gamma_f$ can simply be obtained by integration of the equations
$$\dot x_i(t) = f(X_{(i)}(t)) \ \mbox{for}\ 1\leq i\leq N\, .$$ 
Similarly, if $X_{(i)}^{(m)}$ are discrete-time orbits of $\Gamma_f$ with $X_{(i)}^{(m)}(0)=\pi_i(x(0))$, then $x_i^{(m+1)}:=f(X_{(i)}^{(m)})$ defines a discrete-time orbit of $\gamma_f$.

The transition from $\gamma_f$ to $\Gamma_f$ is thus reminiscent of the symmetry reduction of an equivariant dynamical system: the dynamics of $\gamma_f$ descends to the dynamics of $\Gamma_f$ and the dynamics of $\gamma_f$ can be reconstructed from that of $\Gamma_f$ by means of integration. 
Nevertheless,
$n$ can of course be both smaller and larger than $N$. In the latter case, the dynamics of $\Gamma_f$ may be much richer than that of $\gamma_f$ and it is confusing to speak of reduction. In either case, $\Gamma_f$ captures all the dynamics of $\gamma_f$.

\begin{example}
Again, let $N=3$ and let $\sigma_1, \sigma_2, \sigma_3$ be defined as in Example \ref{ex2}. Recall
\begin{align}\nonumber
& A_{\sigma_1}(X_1, X_2, X_3) = (X_1, X_2, X_3) \, , \\ \nonumber & A_{\sigma_2}(X_1, X_2, X_3) = (X_2, X_3, X_3) \, , \\ 
& A_{\sigma_3}(X_1, X_2, X_3) = (X_3, X_3, X_3) \, . \nonumber
\end{align}
This means that the network map $\Gamma_f$ is given by
$$\Gamma_f(X_1, X_2, X_3)=(f(X_1, X_2, X_3), f(X_2, X_3, X_3), f(X_3, X_3, X_3) )\, .$$
In this example, the conjugacies from $\gamma_f$ to $\Gamma_f$ are
\begin{align}
&\pi_1:(x_1, x_2, x_3)\mapsto (X_1, X_2, X_3):=(x_1, x_1, x_1)\, ,\nonumber \\
&\pi_2:(x_1, x_2, x_3)\mapsto (X_1, X_2, X_3):=(x_2, x_1, x_1)\, ,\nonumber \\
&\pi_3:(x_1, x_2, x_3)\mapsto (X_1, X_2, X_3):=(x_3, x_2, x_1)\, . \nonumber
\end{align}
The conjugacy $\pi_3$ is bijective, which explains that Figures \ref{pict2} and \ref{pict5} are isomorphic.
 \begin{figure}[ht]\renewcommand{\figurename}{\rm \bf \footnotesize Figure}
\centering

\begin{tikzpicture}[->,>=stealth',shorten >=1pt,auto,node distance=2cm,
                    thick,main node/.style={circle,draw,font=\sffamily\Large\bfseries}]

  \node[main node] (1) {$X_3$};
  \node[main node] (2) [below of=1] {$X_2$};
  \node[main node] (3) [below of=2] {$X_1$};

   \node[main node] (4) [right of=1] {$X_3$};
  \node[main node] (5) [below of=4] {$X_2$};
  \node[main node] (6) [below of=5] {$X_1$};
  \node[main node] (7) [right of =4] {$X_3$};
  \node[main node] (8) [below  of=7] {$X_2$};
  \node[main node] (9) [below of=8] {$X_1$};

  \path[every node/.style={font=\sffamily\small}]
   (2) edge [loop above] node {} (2)
 (3) edge [loop above] node {} (3)
(1) edge [loop above] node {} (1)
    
    (5) edge node {} (6)
 (4) edge  node {} (5)
(4) edge [loop above] node {} (4)
 (7) edge [loop above] node {} (7)
 (7) edge node {} (8)
(7) edge [bend left] node {} (9)
    ;

\end{tikzpicture}

\caption{\footnotesize {\rm The fundamental network of our three-cell feedforward network.}}
\label{pict5}
\end{figure}
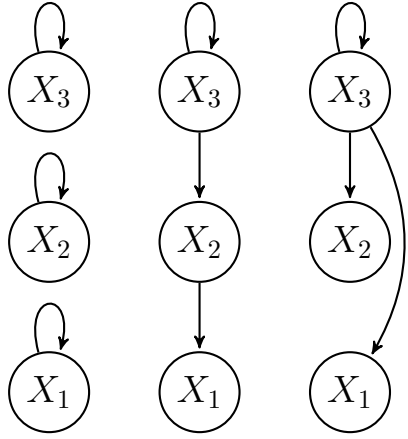

\end{example}

\begin{example}
Recall Example \ref{funnyexample} that features a semigroup with $4$ elements. 
To determine the maps $A_{\sigma_j}$, we compute the multiplication table of this semigroup:     
$$\begin{array}{c|cccc} 
\circ & \sigma_1 & \sigma_2 & \sigma_3 & \sigma_4\\
\hline
\sigma_1 & \sigma_1 &\sigma_1 & \sigma_1&\sigma_1\\
\sigma_2 & \sigma_2 &\sigma_2 & \sigma_2&\sigma_2\\
\sigma_3 & \sigma_2 &\sigma_1 & \sigma_4&\sigma_3\\
\sigma_4 & \sigma_1 &\sigma_2 & \sigma_3&\sigma_4
\end{array}
\begin{array}{l} \\ \\ \\ \\ . \end{array}
$$
This implies that
\begin{align}\nonumber
A_{\sigma_1}(X_1, X_2, X_3, X_4)=(X_1, X_2, X_2, X_1), \\ \nonumber
 A_{\sigma_2}(X_1, X_2, X_3, X_4)=(X_1, X_2, X_1, X_2), \\ \nonumber
A_{\sigma_3}(X_1, X_2, X_3, X_4)=(X_1, X_2, X_4, X_3), \\ \nonumber
A_{\sigma_4}(X_1, X_2, X_3, X_4)=(X_1, X_2, X_3, X_4). 
\end{align}
Hence, the corresponding fundamental network is given by
 \begin{align}\nonumber
\begin{array}{ll} 
\dot X_1 =& f(X_1, X_2, X_2, X_1)\, , \\
\dot X_2 =& f(X_1, X_2, X_1, X_2)\, , \\
\dot X_3= & f(X_1, X_2, X_4, X_3)\, , \\
\dot X_4 = & f(X_1, X_2, X_3, X_4)\, .
\end{array}
\end{align}
This fundamental network has been depicted in Figure \ref{pict9}.
 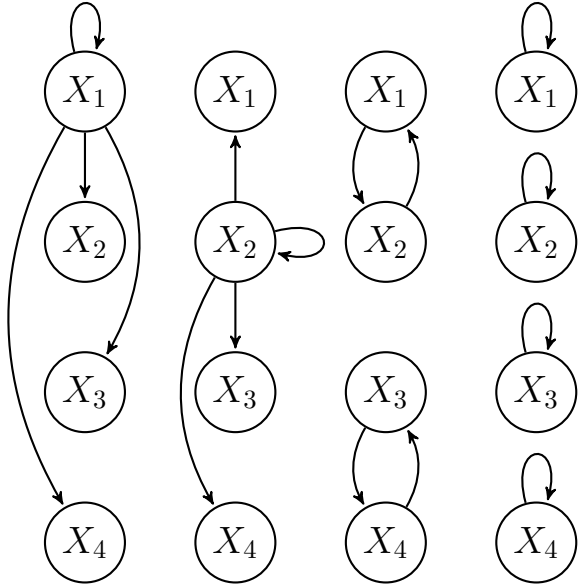
\begin{figure}[ht]\renewcommand{\figurename}{\rm \bf \footnotesize Figure} 
\centering
\begin{tikzpicture}[->,>=stealth',shorten >=1pt,auto,node distance=2cm,
                    thick,main node/.style={circle,draw,font=\sffamily\Large\bfseries}]

  \node[main node] (1) {$X_1$};
  \node[main node] (2) [below of=1] {$X_2$};
  \node[main node] (3) [below of=2] {$X_3$};
  \node[main node] (4) [below of=3] {$X_4$};

   \node[main node] (5) [right of=1] {$X_1$};
  \node[main node] (6) [below of=5] {$X_2$};
  \node[main node] (7) [below of=6] {$X_3$};
  \node[main node] (8) [below of=7] {$X_4$};

  \node[main node] (9) [right of =5] {$X_1$};
  \node[main node] (10) [below  of=9] {$X_2$};
  \node[main node] (11) [below of=10] {$X_3$};
   \node[main node] (12) [below of=11] {$X_4$};

\node[main node] (13) [right of =9] {$X_1$};
  \node[main node] (14) [below  of=13] {$X_2$};
  \node[main node] (15) [below of=14] {$X_3$};
 \node[main node] (16) [below of=15] {$X_4$};

  \path[every node/.style={font=\sffamily\small}]
   (1) edge [loop above] node {} (1)
 (1) edge node {} (2)
(1) edge [bend left] node {} (3)
 (1) edge [bend right] node {} (4)   
    (6) edge node {} (5)
 (6) edge  [loop right] node {} (6)
(6) edge  node {} (7)
 (6) edge [bend right ] node {} (8)
 (9) edge [bend right] node {} (10)
(10) edge [bend right] node {} (9)
(11) edge [bend right] node {} (12)
  (12) edge [bend right] node {} (11)  
 (13) edge [loop above] node {} (13)
 (14) edge [loop above] node {} (14)
 (15) edge [loop above] node {} (15)
 (16) edge [loop above] node {} (16)
;
\end{tikzpicture}
\caption{\footnotesize {\rm A fundamental network with $n=4$.}}
\label{pict9}
\end{figure}
It is clear that the maps 
\begin{align}
&\pi_1:(x_1, x_2)\mapsto (X_1, X_2, X_3, X_4):=(x_1, x_2, x_2, x_1)\, ,\nonumber \\
&\pi_2:(x_1, x_2)\mapsto (X_1, X_2, X_3, X_4):=(x_1, x_2, x_1, x_2)\, \nonumber 
\end{align}
conjugate $\gamma_f$ to $\Gamma_f$.
\end{example}
The advantage of studying $\Gamma_f$ instead of $\gamma_f$ is that 
the definition $(\Gamma_f)_j:=f\circ A_{\sigma_j}$ explicitly displays the representation of the semigroup $\Sigma$, whereas the definition $(\gamma_f)_i:=f\circ\pi_i$ clearly does not. This has as a consequence that the transformation formulas for the composition and the Lie bracket become completely natural: 
\begin{lemma}\label{gammaGammacomposition}
It holds that 
$$\Gamma_f\circ \Gamma_g= \Gamma_{f\circ_{\Sigma}g}\ \mbox{and}\ [\Gamma_f, \Gamma_g]=\Gamma_{[f,g]_{\Sigma}}\, .$$
\end{lemma}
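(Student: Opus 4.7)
The plan is to prove both identities by direct computation, closely mirroring the arguments for Theorems \ref{composition} and \ref{brackettheorem}. The role played there by the intertwining relation $A_{\sigma_j}\circ \pi_i = \pi_{\sigma_j(i)}$ is replaced here by the representation property $A_{\sigma_{j_1}}\circ A_{\sigma_{j_2}} = A_{\sigma_{j_1}\circ \sigma_{j_2}}$ from Theorem \ref{Amaps}. The key intermediate observation is that for any $h\in C^{\infty}(V^n,V)$,
\[
A_{\sigma_j}\bigl(h(A_{\sigma_1}X),\ldots,h(A_{\sigma_n}X)\bigr) = \bigl(h(A_{\sigma_1}A_{\sigma_j}X),\ldots,h(A_{\sigma_n}A_{\sigma_j}X)\bigr),
\]
which is precisely the computation (\ref{Amapdef})--(\ref{formulaA}) combined with $\sigma_{\widetilde \sigma_k(j)} = \sigma_k\circ\sigma_j$, and was already used in the proof of Lemma \ref{associative}.

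For the composition identity, I would unwind
\[
(\Gamma_f\circ \Gamma_g)_j(X) = f\bigl(A_{\sigma_j}\Gamma_g(X)\bigr) = f\bigl(A_{\sigma_j}(g(A_{\sigma_1}X),\ldots,g(A_{\sigma_n}X))\bigr),
\]
apply the displayed identity above with $h=g$ to rewrite the argument of $f$ as $(g(A_{\sigma_k}(A_{\sigma_j}X)))_{k=1}^n$, and recognise the resulting expression as $(f\circ_{\Sigma} g)(A_{\sigma_j}X) = (\Gamma_{f\circ_{\Sigma} g})_j(X)$ via the definition (\ref{alsocomposition}) of $\circ_{\Sigma}$.

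For the Lie bracket, I would first establish that $D\Gamma_f\cdot \Gamma_g$ has $\Gamma$-form by differentiating $\Gamma_f(X+t\Gamma_g(X))_j = f(A_{\sigma_j}X + tA_{\sigma_j}\Gamma_g(X))$ in $t$ at $t=0$. Using the same rewriting of $A_{\sigma_j}\Gamma_g(X)$ as above, this yields
\[
(D\Gamma_f(X)\cdot\Gamma_g(X))_j = \sum_{k=1}^n D_kf(A_{\sigma_j}X)\cdot g(A_{\sigma_k}(A_{\sigma_j}X)) = \Bigl(\sum_{k=1}^n D_kf\cdot (g\circ A_{\sigma_k})\Bigr)(A_{\sigma_j}X).
\]
Antisymmetrising in $f$ and $g$ and invoking the defining formula (\ref{bracketdef}) for $[f,g]_{\Sigma}$ gives $[\Gamma_f,\Gamma_g]_j(X) = [f,g]_{\Sigma}(A_{\sigma_j}X) = (\Gamma_{[f,g]_{\Sigma}})_j(X)$.

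The main obstacle is purely notational: keeping track of $\widetilde \sigma_k(j)$ versus $\widetilde \sigma_j(k)$ when one unfolds the permutation structure of $A_{\sigma_j}$. This is best handled by never expanding the $A_{\sigma_j}$'s in terms of the $\widetilde \sigma$'s during the computation, and instead using the representation property $A_{\sigma_{j_1}}\circ A_{\sigma_{j_2}} = A_{\sigma_{j_1}\circ \sigma_{j_2}}$ as a black box; this is in the same spirit as the coordinate-free version of Theorem \ref{Amaps} given in the remark following its proof.
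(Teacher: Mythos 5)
Your proposal is correct and follows essentially the same route as the paper: the paper's proof rests on exactly the identity $A_{\sigma_j}(g(A_{\sigma_1}X),\ldots,g(A_{\sigma_n}X))=(g(A_{\sigma_1}A_{\sigma_j}X),\ldots,g(A_{\sigma_n}A_{\sigma_j}X))$ that you isolate, and then unwinds $(\Gamma_f\circ\Gamma_g)_j(X)$ to $(f\circ_\Sigma g)(A_{\sigma_j}X)$ just as you do. The paper dismisses the bracket case as ``similar''; your explicit differentiation of $\Gamma_f(X+t\Gamma_g(X))_j$ at $t=0$ is precisely the intended argument.
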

\begin{proof}
First of all,
\begin{align}\nonumber
A_{\sigma_j}(g(A_{\sigma_1}X), &\ldots, g(A_{\sigma_n}X))= (g(A_{\sigma_{\widetilde \sigma_1(j)}}X), \ldots, g(A_{\sigma_{\widetilde \sigma_n(j)}}X))\\ \nonumber &= (g(A_{\sigma_1\circ \sigma_j}X), \ldots, g(A_{\sigma_n\circ\sigma_j}X))\, . 
\end{align}
This gives that
\begin{align}\nonumber
(\Gamma_f\circ\Gamma_g)_j(X)=f(A_{\sigma_j}(g(A_{\sigma_1}X), \ldots, g(A_{\sigma_n}X)) &= f(g(A_{\sigma_1\circ \sigma_j}X), \ldots, g(A_{\sigma_n\circ \sigma_j}X)) = \\
\nonumber
  f(g(A_{\sigma_1}(A_{\sigma_j}X)), \ldots, g(A_{\sigma_n}(A_{\sigma_j}X))) =& (f\circ_{\Sigma}g)(A_{\sigma_j}X) = (\Gamma_{f\circ_{\Sigma}g})_j(X)\, .
\end{align}
The computation for the Lie bracket is similar.
\end{proof} 
We stress that Lemma \ref{gammaGammacomposition} holds due to the definition $(\Gamma_f)_j:=f\circ A_{\sigma_j}$ and the fact that $\sigma_j\mapsto A_{\sigma_j}$ is a homomorphism. Lemma \ref{gammaGammacomposition} implies for example that the symbolic computation of the normal form of $\Gamma_f$ is the same as the symbolic computation of the normal form of $\gamma_f$.

We propose to call $\Gamma_f$  the {\it fundamental network} of $\gamma_f$. Two properties make this fundamental network fundamental: first of all, the network architecture of the fundamental network only depends on the multiplicative structure of the semigroup $\Sigma$ and not on the explicit realization of $\Sigma$ itself - in particular, it does not depend on $N$. This means that two semigroup networks have isomorphic fundamental networks if and only if their semigroups are isomorphic. The second fundamental property of the fundamental network is that it is equal to its own fundamental network, if the latter is defined. This follows from Proposition \ref{faithful} below, in which we call a homomorphism of semigroups {\it faithful} if it is injective.
\begin{proposition}\label{faithful}
Assume that the homomorphism $\sigma_j\mapsto \widetilde \sigma_j$ is faithful. Then
 $$\widetilde{\widetilde \sigma}_j = \widetilde \sigma_j\ \mbox{and therefore}\ A_{\sigma_j}=A_{\widetilde \sigma_j}\ \mbox{for all} \ 1\leq j\leq n\, .$$
\end{proposition}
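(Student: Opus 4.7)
The plan is to prove the set-theoretic identity $\widetilde{\widetilde\sigma}_j = \widetilde\sigma_j$ by a short chain of substitutions that exploits the homomorphism property from Remark \ref{tildesigmaremark} together with the faithfulness hypothesis, and then to read off $A_{\sigma_j} = A_{\widetilde\sigma_j}$ directly from formula (\ref{Amapdef}).

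First I would unfold the definition of $\widetilde{\widetilde\sigma}_j$. Since $\widetilde\Sigma = \{\widetilde\sigma_1, \ldots, \widetilde\sigma_n\}$ is closed under composition (Remark \ref{tildesigmaremark}), it is itself a semigroup, so the construction of equation (\ref{tildesigma}) applies to it and characterises $\widetilde{\widetilde\sigma}_j:\{1,\ldots,n\}\to\{1,\ldots,n\}$ by
$$\widetilde\sigma_{\widetilde{\widetilde\sigma}_j(k)} = \widetilde\sigma_j \circ \widetilde\sigma_k.$$
Next I would apply the homomorphism identity $\widetilde\sigma_j \circ \widetilde\sigma_k = \widetilde{\sigma_j\circ\sigma_k}$ from Remark \ref{tildesigmaremark}, and then substitute in the original defining relation $\sigma_j\circ\sigma_k = \sigma_{\widetilde\sigma_j(k)}$, to rewrite the right-hand side as $\widetilde\sigma_{\widetilde\sigma_j(k)}$. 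This collapses the previous display to
$$\widetilde\sigma_{\widetilde{\widetilde\sigma}_j(k)} = \widetilde\sigma_{\widetilde\sigma_j(k)}.$$

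The faithfulness hypothesis is invoked exactly here: because $\ell \mapsto \widetilde\sigma_\ell$ is injective, the equality of these two elements of $\widetilde\Sigma$ forces the equality of their indices, giving $\widetilde{\widetilde\sigma}_j(k) = \widetilde\sigma_j(k)$ for all $j,k$, that is, $\widetilde{\widetilde\sigma}_j = \widetilde\sigma_j$. Substituting this into the definition (\ref{Amapdef}) applied to the semigroup $\widetilde\Sigma$ yields
$$A_{\widetilde\sigma_j}(X_1,\ldots,X_n) = (X_{\widetilde{\widetilde\sigma}_1(j)}, \ldots, X_{\widetilde{\widetilde\sigma}_n(j)}) = (X_{\widetilde\sigma_1(j)}, \ldots, X_{\widetilde\sigma_n(j)}) = A_{\sigma_j}(X_1,\ldots,X_n),$$
which is the second claim.

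I do not expect a real obstacle; the proof is a short bookkeeping exercise, and the only conceptual moment is recognising \emph{where} faithfulness is needed: it is precisely the step that upgrades an equality of $\widetilde\sigma$-images to an equality of indices. If that hypothesis were dropped, the identity $\widetilde\sigma_{\widetilde{\widetilde\sigma}_j(k)} = \widetilde\sigma_{\widetilde\sigma_j(k)}$ would still hold, but one could only conclude that $\widetilde{\widetilde\sigma}_j(k)$ and $\widetilde\sigma_j(k)$ lie in the same fibre of $\ell\mapsto\widetilde\sigma_\ell$, which shows that the injectivity assumption is essential rather than incidental.
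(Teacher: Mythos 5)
Your proof is correct and follows essentially the same route as the paper: the identical chain $\widetilde\sigma_{\widetilde{\widetilde\sigma}_j(k)} = \widetilde\sigma_j\circ\widetilde\sigma_k = \widetilde{\sigma_j\circ\sigma_k} = \widetilde\sigma_{\widetilde\sigma_j(k)}$, followed by cancellation of the $\widetilde\sigma$-indices. The only (welcome) difference is presentational: you make explicit that faithfulness is what licenses both the well-definedness of $\widetilde{\widetilde\sigma}_j$ and the final passage from equal maps to equal indices, whereas the paper only flags the former.
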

\begin{proof}
Recall that $\widetilde \Sigma=\{\widetilde \sigma_1, \ldots, \widetilde \sigma_n\}$ is closed under composition. Thus, the condition that the homomorphism $\sigma_j\mapsto \widetilde \sigma_j$ from $\Sigma$ to $\widetilde\Sigma$ is faithful just means that $\widetilde \Sigma$ is a semigroup. In particular, each map $\widetilde{\widetilde \sigma}_j:\{1, \ldots, n\}\to\{1,\ldots, n\}$ is then well-defined. Now we compute
$$\widetilde \sigma_{\widetilde{\widetilde \sigma}_j(k)} = \widetilde \sigma_j\circ \widetilde \sigma_k = \widetilde{\sigma_j\circ \sigma_k}=\widetilde{\sigma_{\widetilde \sigma_j(k)}}=\widetilde \sigma_{\widetilde \sigma_j(k)}\, .$$
This proves that $\widetilde{\widetilde \sigma}_j = \widetilde \sigma_j$ for all $1\leq j\leq n$ and hence that $A_{\widetilde \sigma_j}=A_{\sigma_j}$ for all $1\leq j\leq n$.
\end{proof}
 Proposition \ref{faithful} brings up the question when the homomorphism $\sigma_j\mapsto \widetilde \sigma_j$ is faithful, i.e. under which conditions the elements of $\Sigma$ all have different left-multiplicative behavior. We give a partial answer to this question in Remark \ref{slavesremark} below. The upshot of this remark is that one may essentially always assume the homomorphism to be faithful.

We finish this section with a few simple observations on synchrony and symmetry for $\Gamma_f$. First of all, a direct consequence of Theorem \ref{conjugation} is that each ${\rm im}\, \pi_i\subset V^n$ is an invariant subspace for the dynamics of $\Gamma_f$. Interestingly, another way to see this is by the following
\begin{proposition}
Every ${\rm im}\, \pi_i \subset V^n$ is a robust synchrony space for the $\Gamma_f$'s.
\end{proposition}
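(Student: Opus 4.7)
The plan is to reduce the claim to Proposition \ref{balanced} applied to the fundamental network $\Gamma_f$, whose underlying network collection is $\widetilde\Sigma=\{\widetilde\sigma_1,\dots,\widetilde\sigma_n\}$ acting on the cell set $\{1,\dots,n\}$. I need to exhibit a partition $P_i$ of $\{1,\dots,n\}$ such that ${\rm im}\,\pi_i = {\rm Syn}_{P_i}$ and then verify that $P_i$ is balanced with respect to $\widetilde\Sigma$.

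First, I would define $P_i$ to be the partition of $\{1,\dots,n\}$ induced by the equivalence relation $j_1\sim_i j_2 \iff \sigma_{j_1}(i)=\sigma_{j_2}(i)$. Unpacking the definition of $\pi_i$, a vector $X=(X_1,\dots,X_n)\in V^n$ lies in ${\rm im}\,\pi_i$ iff there is some $x\in V^N$ with $X_j = x_{\sigma_j(i)}$ for all $1\le j\le n$; this condition can be satisfied iff $X_{j_1}=X_{j_2}$ whenever $\sigma_{j_1}(i)=\sigma_{j_2}(i)$, i.e.\ iff $X\in{\rm Syn}_{P_i}$. (For the ``if'' direction, pick any $x\in V^N$ whose value at the cell $\sigma_j(i)$ is the common value $X_j$; for indices outside $\sigma_1(i),\dots,\sigma_n(i)$ choose $x$ arbitrarily.) Hence ${\rm im}\,\pi_i={\rm Syn}_{P_i}$.

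Next I would verify that $P_i$ is balanced for $\widetilde\Sigma$. By Proposition \ref{balanced} this means showing that each $\widetilde\sigma_k$ maps every block of $P_i$ into a single block of $P_i$, i.e.\ that $\sigma_{j_1}(i)=\sigma_{j_2}(i)$ implies $\sigma_{\widetilde\sigma_k(j_1)}(i)=\sigma_{\widetilde\sigma_k(j_2)}(i)$ for every $k$. The identity $\sigma_{\widetilde\sigma_k(j)}=\sigma_k\circ\sigma_j$ from (\ref{tildesigma}) gives
\[
\sigma_{\widetilde\sigma_k(j_1)}(i) = \sigma_k(\sigma_{j_1}(i)) = \sigma_k(\sigma_{j_2}(i)) = \sigma_{\widetilde\sigma_k(j_2)}(i),
\]
which closes the argument. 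Applying Proposition \ref{balanced} in the reverse direction, ${\rm Syn}_{P_i}={\rm im}\,\pi_i$ is then a robust synchrony space for the fundamental network $\Gamma_f$.

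I do not expect any serious obstacle here; the only point that requires a moment's thought is recognizing ${\rm im}\,\pi_i$ as a synchrony subspace (not a generic linear subspace). Once that identification is made, the balance condition reduces transparently to the associativity $\sigma_k\circ\sigma_{j_1}=\sigma_k\circ\sigma_{j_2}$ whenever $\sigma_{j_1}(i)=\sigma_{j_2}(i)$, which is automatic. As a sanity check one can also observe that Theorem \ref{conjugation} directly gives $\Gamma_f({\rm im}\,\pi_i)\subset{\rm im}\,\pi_i$ for \emph{every} smooth $f$, which is consistent with the robustness conclusion obtained via the balanced partition.
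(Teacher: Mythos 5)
Your proof is correct and follows essentially the same route as the paper: identify ${\rm im}\,\pi_i$ with ${\rm Syn}_{P_i}$ for the partition induced by $j_1\sim j_2 \Leftrightarrow \sigma_{j_1}(i)=\sigma_{j_2}(i)$, and check that this partition is balanced for $\widetilde\Sigma$ via $\sigma_{\widetilde\sigma_k(j)}=\sigma_k\circ\sigma_j$. The extra detail you supply for the inclusion ${\rm Syn}_{P_i}\subset{\rm im}\,\pi_i$ is a welcome (if minor) elaboration of a step the paper leaves implicit.
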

\begin{proof}
Let us define a partition $P$ of $\{1, \ldots, n\}$ by letting $1\leq j_1, j_2\leq n$ be in the same element of $P$ if and only if $\sigma_{j_1}(i)=\sigma_{j_2}(i)$. Then
$${\rm Syn}_P\! =\! \{X\in V^n\, | \, X_{j_1}=X_{j_2} \, \mbox{when}\, \sigma_{j_1}(i)=\sigma_{j_2}(i)\}= \{ (x_{\sigma_1(i)}, \ldots, x_{\sigma_n(i)}) \, | \, x\in V^N\} = {\rm im}\, \pi_i\ .$$
It remains to show that the partition $P$ is balanced for $\widetilde \Sigma$. This is easy though: when $1\leq j_1, j_2\leq n$ are in the same element of $P$, then it holds for all $1\leq k\leq n$ that 
$$\sigma_{\widetilde \sigma_{k}(j_1)} (i)= (\sigma_k\circ \sigma_{j_1})(i)=(\sigma_{k}\circ \sigma_{j_2})(i) = \sigma_{\widetilde \sigma_{k}(j_2)}(i)\, ,$$
where the middle equality holds because $\sigma_{j_1}(i)=\sigma_{j_2}(i)$.
This proves that also $\widetilde \sigma_{k}(j_1)$ and $\widetilde \sigma_{k}(j_2)$ are in the same element of $P$ and hence that the elements of $\widetilde \Sigma$ preserve $P$.
\end{proof}
Recall that $A_{\sigma_j}\circ \pi_i=\pi_{\sigma_j(i)}$. This implies that $A_{\sigma_j}$ sends the $\Gamma_f$-invariant subspace ${\rm im}\, \pi_i$ to the $\Gamma_f$-invariant subspace ${\rm im}\, \pi_{\sigma_j(i)}$. But much more is true: the following result shows that $A_{\sigma_j}$
sends all orbits of $\Gamma_f$ to orbits of $\Gamma_f$, even though $A_{\sigma_j}$ may not be invertible.
 \begin{proposition}
$$\Gamma_f\circ A_{\sigma_j}=A_{\sigma_j}\circ \Gamma_f\ .$$
\end{proposition}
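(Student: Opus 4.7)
The plan is to verify the identity $\Gamma_f \circ A_{\sigma_j} = A_{\sigma_j}\circ \Gamma_f$ component by component, using the definition $(\Gamma_f)_k(X) = f(A_{\sigma_k}X)$, the representation property $A_{\sigma_k}\circ A_{\sigma_j} = A_{\sigma_k\circ\sigma_j}$ from Theorem \ref{Amaps}, and the explicit coordinate formula $(A_{\sigma_j}Y)_k = Y_{\widetilde\sigma_k(j)}$ from (\ref{Amapdef}).

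First I would unwind the $k$-th component of the left-hand side: $(\Gamma_f\circ A_{\sigma_j})_k(X) = f\bigl(A_{\sigma_k}(A_{\sigma_j}X)\bigr) = f\bigl(A_{\sigma_k\circ \sigma_j}X\bigr)$, where the last equality is Theorem \ref{Amaps}. Next I would unwind the $k$-th component of the right-hand side by reading off the $k$-th slot of $A_{\sigma_j}$ applied to $Y := \Gamma_f(X)$: $(A_{\sigma_j}\circ \Gamma_f)_k(X) = \bigl(\Gamma_f(X)\bigr)_{\widetilde\sigma_k(j)} = f\bigl(A_{\sigma_{\widetilde\sigma_k(j)}}X\bigr)$. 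To reconcile the two outputs, I would invoke the defining relation (\ref{tildesigma}), which -- applied with $j$ and $k$ swapped -- reads $\sigma_{\widetilde\sigma_k(j)} = \sigma_k\circ\sigma_j$. This immediately gives $A_{\sigma_{\widetilde\sigma_k(j)}} = A_{\sigma_k\circ\sigma_j}$, so both sides equal $f\bigl(A_{\sigma_k\circ\sigma_j}X\bigr)$ for every $k$, and the identity follows.

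I do not anticipate any real difficulty here: the proposition is essentially a tautological consequence of the facts that $\Gamma_f$ is assembled out of the $A_{\sigma_k}$'s applied to $f$ and that $\sigma\mapsto A_{\sigma}$ is a representation of $\Sigma$. The only place one must stay alert is the index bookkeeping — the subscript $\widetilde\sigma_k(j)$ has to be distinguished carefully from $\widetilde\sigma_j(k)$ — so that (\ref{tildesigma}) and (\ref{Amapdef}) are applied with the variables placed in the correct slots. As a sanity check I would also note a more conceptual way to see the statement: by Theorem \ref{conjugation} every $\pi_i$ intertwines $\gamma_f$ and $\Gamma_f$, and one can then compose with $A_{\sigma_j}$ using $A_{\sigma_j}\circ\pi_i = \pi_{\sigma_j(i)}$; however the direct index computation above is shorter and avoids passing through $V^N$.
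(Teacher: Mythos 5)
Your proof is correct and follows essentially the same route as the paper: both compute the $k$-th component of each side using $(\Gamma_f)_k = f\circ A_{\sigma_k}$, the representation property $A_{\sigma_k}\circ A_{\sigma_j}=A_{\sigma_k\circ\sigma_j}$, and the identity $\sigma_{\widetilde\sigma_k(j)}=\sigma_k\circ\sigma_j$ together with the coordinate formula $(A_{\sigma_j}Y)_k=Y_{\widetilde\sigma_k(j)}$. The only difference is cosmetic: the paper writes it as a single chain of equalities from left-hand side to right-hand side, whereas you expand both sides and meet in the middle.
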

\begin{proof}
\begin{align}
(\Gamma_f\circ A_{\sigma_j})_k(X) &= f(A_{\sigma_k}\circ A_{\sigma_j}X) = f(A_{\sigma_k\circ\sigma_j}X)= \nonumber \\ \nonumber f(A_{\sigma_{\widetilde \sigma_k(j)}}X) &= (\gamma_f)_{\widetilde \sigma_k(j)}(X) = (A_{\sigma_j}\circ \gamma_f)_k(X)\ .
\end{align}
\end{proof}
The final result of this section shows that $\Gamma_f$ may even have more symmetry: the dynamical input symmetries of $\gamma_f$ are true symmetries of $\Gamma_f$.
\begin{proposition}
If $p$ is a permutation of $\{1, \ldots, N\}$ and $q$ is a permutation of $\{1, \ldots, n\}$ so that $p\circ \sigma_j=\sigma_{q(j)}\circ p$ for all $1\leq j\leq n$ and if $f\circ \lambda_q \circ \pi_i=f\circ \pi_i$ for all $1\leq i \leq N$, then 
$$\Gamma_f\circ \lambda_q  = \lambda_q\circ\Gamma_f\ \mbox{on every}\ {\rm im}\, \pi_i\, .$$ 
\end{proposition}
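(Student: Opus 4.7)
The plan is to chain together three facts already in the paper: Theorem \ref{conjugation}, which says $\Gamma_f \circ \pi_i = \pi_i \circ \gamma_f$ for every $i$; the intertwining identity (\ref{pipi}), $\pi_i \circ \lambda_p = \lambda_q \circ \pi_{p(i)}$, valid precisely under the first hypothesis $p \circ \sigma_j = \sigma_{q(j)} \circ p$; and the Proposition earlier in this section, which combines both hypotheses into the commutation $\gamma_f \circ \lambda_p = \lambda_p \circ \gamma_f$.

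The computation on ${\rm im}\, \pi_i$ is then a sequence of rewrites. First, substituting $p^{-1}(i)$ for $i$ in (\ref{pipi}) gives the convenient form $\lambda_q \circ \pi_i = \pi_{p^{-1}(i)} \circ \lambda_p$, so
\[
\Gamma_f \circ \lambda_q \circ \pi_i = \Gamma_f \circ \pi_{p^{-1}(i)} \circ \lambda_p.
\]
Next, Theorem \ref{conjugation} applied at the index $p^{-1}(i)$ converts this into $\pi_{p^{-1}(i)} \circ \gamma_f \circ \lambda_p$; the earlier Proposition lets me swap $\gamma_f$ and $\lambda_p$, producing $\pi_{p^{-1}(i)} \circ \lambda_p \circ \gamma_f$; another use of (\ref{pipi}), now read as $\pi_{p^{-1}(i)} \circ \lambda_p = \lambda_q \circ \pi_i$, yields $\lambda_q \circ \pi_i \circ \gamma_f$; and a final invocation of Theorem \ref{conjugation} gives $\lambda_q \circ \Gamma_f \circ \pi_i$. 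This is exactly the desired identity restricted to ${\rm im}\, \pi_i$.

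I do not expect any real obstacle: each step is a direct application of an identity already proved, and the only care needed is in bookkeeping --- not conflating the two permutation representations $\lambda_p : V^N \to V^N$ and $\lambda_q : V^n \to V^n$, and correctly inserting $p^{-1}$ when rearranging (\ref{pipi}). If one preferred a bare-hands proof avoiding the earlier Proposition, one could unfold $(\Gamma_f \circ \lambda_q)_j \circ \pi_i$ componentwise and use $A_{\sigma_j} \circ \pi_i = \pi_{\sigma_j(i)}$ (Theorem \ref{Amaps}) together with the identity $\sigma_{q(j)} = p \circ \sigma_j \circ p^{-1}$ and the input symmetry $f \circ \lambda_q \circ \pi_i = f \circ \pi_i$; but the route through the earlier Proposition and Theorem \ref{conjugation} is considerably cleaner.
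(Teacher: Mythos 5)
Your proof is correct and is essentially identical to the paper's: both chain together Theorem \ref{conjugation}, the identity $\pi_i\circ\lambda_p=\lambda_q\circ\pi_{p(i)}$, and the earlier commutation $\gamma_f\circ\lambda_p=\lambda_p\circ\gamma_f$, the only cosmetic difference being that you substitute $p^{-1}(i)$ up front whereas the paper works at index $p(i)$ and invokes surjectivity of $p$ at the end.
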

\begin{proof}
Recall that under the conditions of the proposition, it holds that $\pi_i\circ \lambda_p=\lambda_q\circ \pi_{p(i)}$ and that from this it followed that $\lambda_{p}\circ \gamma_f=\gamma_f\circ \lambda_p$. As a consequence,
$$\Gamma_f\circ \lambda_q\circ \pi_{p(i)}=\Gamma_f\circ \pi_i\circ \lambda_p = \pi_i \circ \gamma_f \circ \lambda_p = \pi_i\circ \lambda_p\circ \gamma_f=\lambda_q\circ \pi_{p(i)}\circ \gamma_f=\lambda_q\circ \Gamma_f\circ\pi_{p(i)}\, .$$
Because $p$ is a permutation, this means that $\gamma_f\circ \lambda_q  = \lambda_q\circ\gamma_f$ on every ${\rm im}\, \pi_i$.
\end{proof}

\begin{remark}\label{slavesremark}
To explain when the homomorphism $\sigma_j\mapsto \widetilde \sigma_j$ is faithful, we can make the following definition:
we say that $1\leq i \leq N$ is a {\it slave} for the network $\Sigma$ if there are no $1\leq j \leq n$ and $1\leq k\leq N$ so that 
$\sigma_j(k)=i$. Thus, a slave is a cell that does not act as input for any other cell, not even for itself. The point of this definition is the following:
\begin{proposition}
If $\Sigma$ has no slaves, then $\sigma_j\mapsto \widetilde \sigma_j$ is a faithful homomorphism.
\end{proposition}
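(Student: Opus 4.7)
The plan is to unfold the definition of faithfulness against the defining relation $\sigma_{\widetilde{\sigma}_j(k)} = \sigma_j \circ \sigma_k$ and then use the no-slaves hypothesis to extend a partial equality to a global one. Since we already know that $\sigma_j \mapsto \widetilde{\sigma}_j$ is a homomorphism (by Remark~\ref{tildesigmaremark}), only injectivity needs to be checked.

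First I would suppose $\widetilde{\sigma}_{j_1} = \widetilde{\sigma}_{j_2}$ and aim to conclude $\sigma_{j_1} = \sigma_{j_2}$. Applying the defining identity of $\widetilde{\sigma}_j$ to both sides gives
\[
\sigma_{j_1} \circ \sigma_k \;=\; \sigma_{\widetilde{\sigma}_{j_1}(k)} \;=\; \sigma_{\widetilde{\sigma}_{j_2}(k)} \;=\; \sigma_{j_2} \circ \sigma_k \quad \text{for all } 1 \leq k \leq n.
\]
Evaluating at any $l \in \{1, \ldots, N\}$ shows that $\sigma_{j_1}$ and $\sigma_{j_2}$ agree on $\sigma_k(l)$ for every $k$ and $l$. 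In other words, $\sigma_{j_1}$ and $\sigma_{j_2}$ coincide on the subset
\[
S \;:=\; \bigcup_{k=1}^n \sigma_k(\{1, \ldots, N\}) \;\subset\; \{1, \ldots, N\}.
\]

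Next I would invoke the hypothesis that $\Sigma$ has no slaves. By definition a slave is a cell $i \in \{1,\ldots,N\}$ that lies outside $S$, so the absence of slaves is precisely the statement $S = \{1, \ldots, N\}$. Hence $\sigma_{j_1}$ and $\sigma_{j_2}$ agree on all of $\{1, \ldots, N\}$, which forces $\sigma_{j_1} = \sigma_{j_2}$. This proves injectivity, and so the homomorphism $\sigma_j \mapsto \widetilde{\sigma}_j$ is faithful.

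There is no real obstacle here: the argument is purely a bookkeeping exercise that rewrites the no-slaves condition as surjectivity of the evaluation map $\bigsqcup_k \{1,\ldots,N\} \to \{1,\ldots,N\}$, $(k,l) \mapsto \sigma_k(l)$, and combines it with the fact that left-multiplicative behavior of $\sigma_j$ is encoded exactly by the values of $\sigma_j$ on $S$. The only small subtlety worth highlighting is that one must be careful not to confuse the two indexing roles of $\{1,\ldots,n\}$: the index $k$ ranges over elements of the semigroup $\Sigma$, while the output $\sigma_k(l)$ ranges over cells, and it is the latter set that must exhaust $\{1,\ldots,N\}$ for the argument to close.
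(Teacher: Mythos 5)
Your proof is correct and follows essentially the same route as the paper: from $\widetilde\sigma_{j_1}=\widetilde\sigma_{j_2}$ deduce $\sigma_{j_1}\circ\sigma_k=\sigma_{j_2}\circ\sigma_k$ for all $k$, so the two maps agree on $\bigcup_k {\rm im}\,\sigma_k$, which equals $\{1,\ldots,N\}$ precisely because $\Sigma$ has no slaves. Your explicit identification of the no-slaves condition with surjectivity of $(k,l)\mapsto\sigma_k(l)$ is just a restatement of the paper's one-line observation.
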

\begin{proof} The relation $\widetilde \sigma_{j_1}=\widetilde \sigma_{j_2}$ means that $\sigma_{j_1}\circ\sigma_k=\sigma_{j_2}\circ\sigma_k$ for all $k$. This implies in particular that $\sigma_{j_1}=\sigma_{j_2}$ on ${\rm im}\, \sigma_{k}$ for all $k$. But
if $\Sigma$ is free of slaves, then $\bigcup_{j=1}^n {\rm im}\, \sigma_j=\{1,\ldots, N\}$. Hence, $\sigma_{j_1}=\sigma_{j_2}$.
\end{proof}
If a network has slaves, then we can reduce it until no slaves remain. This works as follows: first of all, we remove any slave from the network. Because slaves do not affect the dynamics of other cells, this can be done without any effect on the network dynamics.
Removing slaves may create new slaves: these are the cells that acted as inputs only for the original slaves. These new slaves can also be removed, etc. until a network free of slaves remains. 

The remaining network may not be defined unambiguously, because some of the maps in $\Sigma$ may coincide after the removal of the slaves. This happens when distinct maps in $\Sigma$ differ only at slaves. Such maps can be identified though, while $f$ must be redefined. In this way, we produce an unambiguous network that is free of slaves. For such a network $\gamma_f$, the corresponding $\Gamma_f$ is a true fundamental network.
\end{remark}
 
\section{Some examples and their normal forms}\label{examplessection}
In this section we illustrate the methods and results of this paper by computing the normal forms of two coupled cell networks. Keeping things simple, we restrict our attention to synchrony breaking steady state bifurcations in one-parameter families of networks with one-dimensional cells.
\subsection{A skew product network}
In the first example, we consider the homogeneous skew product differential equations
\begin{align}\label{fullexample1}
\begin{array}{ll} 
\dot x_1 = &  f(x_1, x_1; \lambda)\, , \\
\dot x_2 = & f(x_2,x_1; \lambda)\, .
\end{array}\, 
\end{align}
Here $x_1, x_2\in \R$ and $f:\R^2\times \R\to \R$. As usual, we will denote the right hand side of (\ref{fullexample1}) by $\gamma_f(x_1,x_2;\lambda)$ and we will henceforth assume that 
$$\gamma_f(0,0;0)=0\ \mbox{and}\ D_x\gamma_f(0,0;0)\ \mbox{is not invertible}.$$ 
This means that at the parameter value $\lambda=0$, the origin $(x_1, x_2)=(0,0)$ is a fully synchronous equilibrium point of (\ref{fullexample1}) that undergoes a steady state bifurcation. We wish to study the generic nature of this bifurcation. So let us write 
$$f_{0,0}(X_1, X_2)=D_Xf(0,0;0)(X_1, X_2)=a_1X_1+a_2X_2\ \mbox{with}\ a_1, a_2\in \R\, .$$ 
With this notation, we have that
$${\rm mat}\, D_x\gamma_{f}(0,0;0) = \left( \begin{array}{cc} a_1 +a_2 & 0\\ a_2 & a_1 \end{array}\right)  \, .$$
We remark that this linearization matrix is semisimple. And moreover that a steady state bifurcation occurs when one of its eigenvalues $a_1+a_2$ or $a_1$ vanishes. 

The obvious but important remark is now that equations (\ref{fullexample1}) define a semigroup coupled cell network. The corresponding semigroup consists of $\sigma_1$ and $\sigma_2$, where
$$\sigma_1(1)= 1, \sigma_1(2) = 2\ \mbox{and} \ \sigma_2(1)= 1, \sigma_2(2) = 1\, .$$
We depicted this network in Figure \ref{pict6}.
 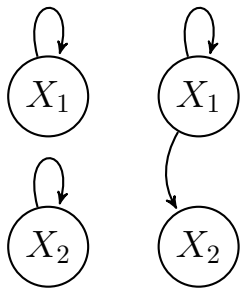
\begin{figure}[ht]\renewcommand{\figurename}{\rm \bf \footnotesize Figure} 
\centering
\begin{tikzpicture}[->,>=stealth',shorten >=1pt,auto,node distance=2cm,
                    thick,main node/.style={circle,draw,font=\sffamily\Large\bfseries}]

  \node[main node] (1) {$X_1$};
  \node[main node] (2) [below of=1] {$X_2$};
  \node[main node] (3) [right of=1] {$X_1$};
  \node[main node] (4) [below of=3] {$X_2$};

  \path[every node/.style={font=\sffamily\small}]
   (1) edge [loop above] node {} (1)
 (2) edge [loop above] node {} (2)
(3) edge [loop above] node {} (3)
 (3) edge [bend right] node {} (4)   
   
;
\end{tikzpicture}
\caption{\footnotesize {\rm A homogeneous skew product network.}}
\label{pict6}
\end{figure}

\noindent The composition table of $\{\sigma_1, \sigma_2\}$ reads
$$\begin{array}{c|cc} 
\circ & \sigma_1 & \sigma_2  \\
\hline
\sigma_1 & \sigma_1 &\sigma_2  \\
\sigma_2 & \sigma_2 &\sigma_2   
\end{array}\, 
\begin{array}{l} \\ \\ . \end{array}
$$
From this table, we can read off that 
$$A_{\sigma_1}(X_1, X_2) = (X_1, X_2) \ , \ A_{\sigma_2}(X_1, X_2) = (X_2, X_2)\, ,$$
and hence that the symbolic Lie bracket of this network is given by
\begin{align}\nonumber
[f, g]_{\Sigma}(X_1, X_2) = & D_1f(X_1,X_2)g(X_1, X_2) + D_2f(X_1,X_2)g(X_2, X_2) \\ \nonumber
 - &  D_1g(X_1,X_2)f(X_1, X_2) + D_2g(X_1,X_2)f(X_2, X_2) \, .\end{align}
Since $D_{x}\gamma_f(0,0;0)$ is semisimple, so is ${\rm ad}^{\Sigma}_{f_{0,0}}: P^{k,l} \to P^{k,l}$ for every $k\geq -1$ and $l\geq 0$. Its kernel determines the normal form of $f$. It only requires a little computation to check that
\begin{align}\nonumber
{\rm ad}^{\Sigma}_{f_{0,0}}:\left\{\!\! \begin{array}{lll} (X_1-X_2)^{\alpha}X_2^{\beta}  &\!\! \mapsto [(1-\alpha)a_1-\beta(a_1+a_2)] (X_1-X_2)^{\alpha}X_2^{\beta} & \!\!\mbox{for}\ \alpha\geq 1\ \mbox{and}\   \beta\geq 0\, ,\\ 
 X_2^{\beta} &\!\! \mapsto (1-\beta)(a_1+a_2)X_2^{\beta} &\!\! \mbox{for}\ \beta\geq 0\, .
 \end{array}\right.
\end{align}
This formula nicely confirms that ${\rm ad}^{\Sigma}_{f_{0,0}}$ is semisimple.
We now consider the two different codimension one cases:
\begin{itemize} 
\item[{\bf 1.}] When $a_1+a_2=0$ but $a_1\neq 0$ then the kernel of $D_x\gamma_f(0,0;0)$ is tangent to the synchrony space $\{x_1=x_2\}$. In this case, the kernel of ${\rm ad}^{\Sigma}_{f_{0,0}}$ is spanned by elements of the form $(X_1-X_2)X_2^{\beta}$ and $X_2^{\beta}$, where $\beta\geq 0$. 
Thus, the general normal form of $f$ is 
$$\overline f(X_1, X_2; \lambda)= (X_1-X_2)F(X_2;\lambda) + G(X_2; \lambda) \, ,$$ 
with $F(X_2;\lambda) = A(\lambda) + \mathcal{O}(X_1)$, $G(X_1; \lambda)= B(\lambda) +C(\lambda)X_1+ D(\lambda)X_1^2 + \mathcal{O}(X_1^3)$ and $A(0)=a_1, B(0) =C(0) = 0$. The normal form equations of motion become
\begin{align}\nonumber
\begin{array}{ll} 
\dot x_1 = & G(x_1; \lambda)\, ,  \\
\dot x_2 = & G(x_1; \lambda)+ (x_2-x_1)F(x_1;\lambda) \, .
\end{array}
\end{align}
This implies first of all that the stationary points of the normal form satisfy $x_1=x_2$ and secondly that $x_1$ solves the equation $G(x_1;\lambda)=B(\lambda)+C(\lambda)x_1+D(\lambda)x_1^2 + \mathcal{O}(x_1^3)=0$.
Under the generic conditions that $B'(0), D(0)\neq 0$, we thus find the saddle node branches
$$ x_1=x_2 =\pm\sqrt{(-B'(0)/D(0))\lambda} + \mathcal{O}(\lambda)$$ of synchronous steady states. A straightforward stability analysis reveals that one of these branches consists of equilibria that are linearly stable in the direction of the synchrony space, while the other branch consists of unstable points. We remark that the saddle node bifurcation is also generic in codimension one in the context of vector fields without any special structure. 

\item[{\bf 2.}]  When $a_1+a_2\neq 0$ and $a_1=0$, then the kernel of ${\rm ad}^{\Sigma}_{f_{0,0}}$ is spanned by elements of the form $(X_1-X_2)^{\alpha}$, where $\alpha \geq 1$, and the element $X_2$. Hence the general normal form of $f$ is given by 
$$\overline f(X_1, X_2; \lambda)= (X_1-X_2)F(X_1-X_2; \lambda) + A(\lambda)X_2\, ,$$ 
with $F(X_1-X_2; \lambda) = B(\lambda) +C(\lambda)(X_1-X_2)+\mathcal{O}(X_1-X_2)^2$ and $A(0)= a_2, B(0)=0$. The normal form differential equations are
\begin{align}\nonumber
\begin{array}{ll} 
\dot x_1 = &   A(\lambda)x_1\, , \\
\dot x_2 = & A(\lambda)x_1 + (x_2-x_1)F(x_2-x_1; \lambda) \, .
\end{array}
\end{align}
This implies that the stationary points of the normal form satisfy $x_1=0$, while either $x_2=0$ or $x_2$ solves the equation $F(x_2;\lambda)=B(\lambda)+C(\lambda)x_2+\mathcal{O}(x_2^2)=0$.
Under the generic conditions that $B'(0), C(0)\neq 0$, we thus find the two steady state branches 
$$x_1 = x_2=0\ \mbox{and}\ x_1=0, x_2=(-B'(0)/C(0))\lambda + \mathcal{O}(\lambda^2)\, .$$ 
\noindent These branches exchange stability when they cross. This means that the normal form displays a synchrony breaking transcritical bifurcation. Such a bifurcation is not generic in codimension one in the context of vector fields without any special structure, and is hence forced by the network structure. More precisely, it follows from the presence of the invariant synchrony space.
 \end{itemize}

  \subsection{A nilpotent feed-forward network}
Next, we consider differential equations with the network structure defined in Example \ref{ex2}: 
\begin{align}\label{example}
 \begin{array}{c} \dot x_1 = f(x_1,x_1,x_1; \lambda) \, ,\\ \dot x_2 = f(x_2,x_1,x_1; \lambda)\, , \\ \dot x_3 = f(x_3, x_2, x_1; \lambda) \, .\end{array}
\end{align}
Here $x_1, x_2, x_3\in\R$ and $f:\R^3\times\R\to\R$. Again, let us write
$$f_{0,0}(X_1, X_2, X_3) = D_Xf(0,0,0;0)(X_1, X_2, X_3)=a_1X_1+a_2X_2+a_3X_3\ \mbox{for} \ a_1, a_2, a_3\in\R\, .$$
Then it holds that 
$${\rm mat}\, D_x\gamma_{f}(0,0,0;0)=  \left( \begin{array}{rrr} a_1+ a_2+ a_3 & 0 & 0 \\ a_2+a_3 & a_1 & 0 \\ a_3 & a_2 & a_1 \end{array}\right)\, .$$
This shows that a steady state bifurcation takes place when either $a_1+a_2+a_3=0$ or $a_1=0$. Moreover, the linearization matrix is not semisimple. In fact, its SN-decomposition reads
$$\left( \begin{array}{rrr} a_1 + a_2 +a_3 & 0 & 0 \\ a_2+a_3 & a_1 & 0 \\ a_3 & a_2& a_1 \end{array}\right)  = \left( \begin{array}{rrr} a_1 + a_2+a_3 & 0 & 0 \\ a_2+a_3 & a_1 & 0 \\ a_2+a_3& 0 & a_1 \end{array}\right) + \left( \begin{array}{rrr} 0 & 0 & 0 \\ 0 & 0 & 0 \\ -a_2 & a_2 & 0 \end{array}\right) \, .  $$
As a consequence, we should accordingly decompose $f_{0,0}$ as 
$$f_{0,0}=f_{0,0}^S+f_{0,0}^N\ \mbox{where}\ f_{0,0}^S(X_1,X_2,X_3) \!= \! a_1X_1+(a_2+a_3)X_3, f_{0,0}^N(X_1,X_2,X_3)\!=\!a_2(X_2-X_3)\, .$$
Recalling that for this network the expression for the symbolic bracket is given in Example \ref{exzoveel}, it again requires a little computation to find that
\begin{align}\nonumber
\mbox{ad}_{f_{0,0}^S}\!: \!\! \left\{ \!\!\!\! \begin{array}{lll} X_3^{\gamma} &\mapsto (1-\gamma)(a_1+a_2+a_3)X_3^{\gamma}  & \hspace{-.1cm} \mbox{for} \ \gamma\geq 0\, ,\\ (X_1-X_3)^{\alpha}(X_2-X_3)^{\beta}X_3^{\gamma}  &\mapsto &  \\   & \hspace{-3.5cm} [(1\!-\!\alpha\!-\!\beta)a_1\! -\! \gamma(a_1\!+\!a_2\!+\!a_3)](X_1-X_3)^{\alpha}(X_2-X_3)^{\beta}X_3^{\gamma} & \hspace{-.1cm} \mbox{for} \ \alpha+\beta\geq 1, \gamma\geq 0\, .
  \end{array}\right.
 \end{align}
and similarly that
\begin{align}\nonumber
 \mbox{ad}_{f_{0,0}^N}: \left\{ \begin{array}{lll}\!\!\! (X_1-X_3)^{\alpha}(X_2-X_3)^{\beta}X_3^{\gamma} & \mapsto & \\ & \hspace{-3.5cm} - \alpha a_2 (X_1-X_3)^{\alpha-1}(X_2-X_3)^{\beta+1}X_3^{\gamma}  & \hspace{-.1cm} \mbox{for} \ \alpha, \beta \geq 1, \gamma\geq 0\, , \\
\!\!\! (X_1-X_3)^{\alpha}X_3^{\gamma}  &\mapsto & \\ & \hspace{-3.5cm} a_2(X_2-X_3)^{\alpha}X_3^{\gamma} - \alpha a_2(X_1-X_3)^{\alpha-1}(X_2-X_3)X_3^{\gamma} &\hspace{-.1cm} \mbox{for}\ \alpha \geq 1, \gamma \geq 0\, ,\\
\!\!\!  (X_2-X_3)^{\beta}X_3^\gamma & \mapsto 0 &\hspace{-.1cm} \mbox{for} \ \beta, \gamma\geq 0\, .
  \end{array}
 \right.
\end{align}
Once more, we now consider the two codimension one cases:
\begin{itemize}
\item[{\bf 1.}] 
If $a_1+a_2+a_3= 0$ and $a_1\neq 0$, then the kernel of ${\rm ad}^{\Sigma}_{f_{0,0}}$ is spanned by terms
$$(X_1-X_3)X_3^{\gamma}, (X_2-X_3)X_3^{\gamma}\ \mbox{and}\ X_3^{\gamma}\ \mbox{with}\ \gamma\geq 0\, .$$
One checks that ${\rm ad}_{f_{0,0}^N}$ vanishes on this kernel, so the general normal form of $f$ is
$$\overline f(X_1, X_2, X_3; \lambda) = (X_1-X_3)F(X_3;\lambda) + (X_2-X_3) G(X_3;\lambda) + H(X_3;\lambda)\, ,$$
where $F(X_3;\lambda)=A(\lambda) + \mathcal{O}(X_3)$, $G(X_3;\lambda)=B(\lambda) + \mathcal{O}(X_3)$, $H(X_3;\lambda)=C(\lambda) + D(\lambda)X_3 +E(\lambda)X_3^2 + \mathcal{O}(X_3^3)$ and $A(0)=a_1, B(0)=a_2, C(0)=D(0)=0$.
The normal form equations of motion are
\begin{align}
 \begin{array}{l} \dot x_1 = H(x_1; \lambda) \, ,\\ \dot x_2 =  (x_2-x_1)F(x_1;\lambda) +H(x_1;\lambda)\, , \\ \dot x_3 =  (x_3-x_1)F(x_1;\lambda) + (x_2-x_1)G(x_1;\lambda) + H(x_1;\lambda)\, .\end{array}
\end{align}
It follows that the steady states of the normal form satisfy $x_1=x_2=x_3$, where $x_1$ satisfies $H(x_1;\lambda)= C(\lambda) + D(\lambda)x_1 +E(\lambda)x_1^2 + \mathcal{O}(x_1^3)=0$. Under the generic conditions that $C'(0), E(0)\neq 0$, this yields the fully synchronous saddle node branches
$$x_1 = x_2 =x_3=\pm\sqrt{-(C'(0)/E(0))\lambda} + \mathcal{O}(\lambda)\, .$$
Again, one of these branches is stable and the other one is unstable in the direction of the maximal synchrony space.
\item[{\bf 2.}] 
When $a_1=0, a_2\neq 0$ and $a_1+a_2+a_3\neq 0$, then $\ker \mbox{ad}_{f_{0,0}^S}$ is spanned by the elements  
$$X_3\ \mbox{and}\ (X_1-X_3)^{\alpha}(X_2-X_3)^{\beta}\ \mbox{with}\ \alpha+\beta \geq 1\, .$$
This time the action of ${\rm ad}^{\Sigma}_{f_{0,0}^N}$ on ${\rm ker}\, {\rm ad}^{\Sigma}_{f_{0,0}^S}$ is nontrivial. The only terms in the kernel that are not in ${\rm im\ ad}_{f_{0,0}^N}$ are actually those of the form 
$$(X_1-X_3)^{\alpha}, X_2-X_3\ \mbox{and} \ X_3,\, \mbox{with} \ \alpha\geq 1\, .$$
This means that the general normal form of $f$ is
$$\overline f(X_1, X_2, X_3;\lambda)= (X_1-X_3)F(X_1-X_3) + A(\lambda)(X_2-X_3) + B(\lambda) X_3 \, ,$$
where $F(X_1-X_3)= C(\lambda) + D(\lambda)(X_1-X_3)+ \mathcal{O}(X_1-X_3)^2$ and $A(0)=a_2, B(0) = a_1+a_2+a_3, C(0)=0$. This gives the equations of motion
\begin{align}
 \begin{array}{l} \dot x_1 = B(\lambda)x_1\, , \\ \dot x_2 = B(\lambda)x_1 + (x_2-x_1)F(x_2-x_1;\lambda) \, ,\\ \dot x_3 = B(\lambda)x_1 +A(\lambda)(x_2-x_1)  + (x_3-x_1)F(x_3-x_1;\lambda)\, .\end{array}
\end{align}
Under the generic assumption that $C'(0), D(0)\neq 0$, we now find three branches of steady states:
\begin{align}\nonumber
& x_1= x_2 = x_3=0\, , \\ \label{solns}
& x_1= x_2=0,  x_3=-(C'(0)/D(0))\lambda + \mathcal{O}(\lambda^2)\, ,\\ \nonumber
& x_1=0,  x_2=-(C'(0)/D(0))\lambda + \mathcal{O}(\lambda^2),  x_3= \pm \sqrt{(a_2C'(0)/D(0)^2)\lambda} + \mathcal{O}(\lambda)\, .
\end{align}
This means that our normal form equations undergo a very particular synchrony breaking steady state bifurcation that comprises a fully synchronous trivial branch, a partially synchronous transcritical branch and fully nonsynchronous saddle-node branches. The solutions on these branches exchange  stability in a specific way, as for example depicted in Figure \ref{figbif}.
 \end{itemize}
 
 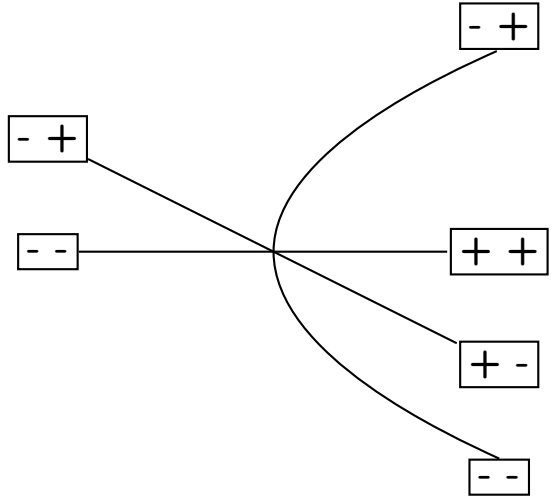
\begin{figure}[ht]\renewcommand{\figurename}{\rm \bf \footnotesize Figure} 
\centering
\begin{tikzpicture} [>=stealth',shorten >=1pt,auto,node distance=3cm,
                    thick,main node/.style={rectangle,draw,font=\sffamily\Large\bfseries}]
 \node[main node] (a) at (-3,0) {-\ -};
 \node[main node] (b) at (3,0) {+\ +};
 \node[main node] (c) at (-3,1.5) {-\ +};
 \node[main node] (d) at (3,-1.5) {+\ -};
 \node[main node] (e) at (3,3) {-\ +};
 \node[main node] (f) at (3,-3) {-\ -};
  \draw (a) --(b);
  \draw (c) --(d);
  \draw[rotate=270] (f.north) parabola bend (0,0) (e.south);
\end{tikzpicture}
\caption{\footnotesize {\rm Bifurcation diagram of a codimension-one steady state bifurcation in the normal form of a three cell feedforward network. Pluses and minuses refer to positive and negative eigenvalues in the eigendirections other than the maximal synchrony space. This figure depicts the solutions of formula (\ref{solns}) in case $a_2, C'(0), D(0)>0$.}}
\label{figbif}
\end{figure}
 
\section{Colored coupled cell networks}\label{generalizationssection}
In this final section, we describe how our results on homogeneous coupled cell networks generalize to certain non-homogeneous coupled cell networks. So let us imagine a coupled cell network with cells of different types. We will refer to the different types of cells as colors. 

More precisely, let us assume that there are $1\leq C <\infty$ colors and that for every color $1\leq c\leq C$ there are precisely $N_c$ cells of color $c$. We label the cells of color $c$ by $1\leq i\leq N_c$ and assume that the state of the $i$-th cell of color $c$ is described by $x_i^{(c)}\in V_c$, where $V_c$ is a linear space that depends on $c$. 

We furthermore assume that the discrete- or continuous-time evolution of $x_i^{(c)}$ is determined by precisely $n_{(1,c)}$ cells of color $1$, by $n_{(2,c)}$ cells of color $2$, etc. This assumption is made precise in Definition \ref{defcolor} below that, although lengthy, is a straightforward generalization of Definition \ref{networkdefinition}. 

\begin{definition}\label{defcolor}
For every $1\leq c, d\leq C$ and every $1\leq j \leq n_{(d,c)}$, assume there is a map 
$$\sigma_j^{(d,c)}:\{1, \ldots, N_c\}\to\{1, \ldots, N_d\}\ .$$
We denote the collection of these maps by 
$$\Sigma:=\{\sigma^{(1,1)}_1, \ldots, \sigma_{n_{(1,1)}}^{(1,1)}; \ldots; \sigma_{1}^{(C,C)}, \ldots, \sigma_{n_{(C,C)}}^{(C,C)}\}\ . $$
Next, we define for all $1 \leq c\leq C$ and $1\leq i\leq N_c$ the maps
\begin{align}
\nonumber
\pi_i^{(c)}: V_1^{N_1} \times&\ldots \times V_C^{N_C}\to V_1^{n_{1,c}}\times \ldots \times V_C^{n_{C,c}}\ \mbox{by} \\ \nonumber 
\pi_i^{(c)}(x^{(1)};\ldots; x^{(C)}) :=& \left(x^{(1)}_{\sigma^{(1,c)}_1(i)}, \ldots, x^{(1)}_{\sigma^{(1,c)}_{n_{(1,c)}}(i)}; \ldots; x^{(C)}_{\sigma^{(C,c)}_1(i)}, \ldots, x^{(C)}_{\sigma^{(C,c)}_{n_{(C,c)}}(i)}\right)\ .
\end{align}
Now assume that $f=(f^{(1)}, \ldots, f^{(C)})$ is a collection of functions, with 
$$f^{(c)}:V_1^{n_{(1,c)}}\times \ldots \times V_C^{n_{(C,c)}}\to V_c\ .$$ 
Then we define
$\gamma_f:V_1^{N_1}\times \ldots \times V_C^{N_C}\to V_1^{N_1}\times \ldots \times V_C^{N_C}$
by 
$$(\gamma_{f})_i^{(c)} := f^{(c)} \circ \pi_i^{(c)} \ \mbox{for all}\ 1\leq c\leq C\ \mbox{and} \ 1\leq i\leq N_c.$$
We say that $\gamma_f$ is a {\it colored coupled cell network map/vector field} subject to $\Sigma$.
\end{definition}

\noindent It is important to note that only the compositions 
$$\sigma_{j_1}^{e,d} \circ \sigma_{j_2}^{d,c}:\{1, \ldots, N_c\}\to\{1,\ldots, N_e\}$$ 
are sensibly defined. This inspires the following definition:
\begin{definition}
We say that $\Sigma$ is a {\it semigroupoid} if for every $1\leq c,d,e\leq C$ and every $1\leq j_1 \leq n_{d,c}$ and $1\leq j_2 \leq n_{e,d}$ there is precisely one $1\leq j_3 \leq n_{e,c} $ such that 
$$\sigma_{j_1}^{(e,d)}\circ\sigma_{j_2}^{(d,c)} = \sigma_{j_3}^{(e,c)}\ .$$ 
\end{definition}
When a collection $\Sigma$ as in Definition \ref{defcolor} is not semigroupoid, then it generates one: the smallest semigroupoid $\Sigma'$ containing $\Sigma$.
\begin{example}
The completely general $C$-dimensional differential equation
\begin{align}
&\dot x^{(c)} = f^{(c)}(x^{(1)}; \ldots; x^{(C)}) \nonumber \ \mbox{for} \ 1\leq c\leq C\ \mbox{and}\ x^{(c)}\in V_c
\end{align}
is an example of a colored coupled cell network with $C$ colors and one cell of each color. The elements of $\Sigma=\{ \sigma_1^{(1,1)}; \ldots; \sigma_C^{(C,C)}\}$ are all defined by $\sigma_j^{(d,c)}(1)=1$. They obviously form a semigroupoid.
\end{example}

\begin{example}
The general $2$-dimensional skew product differential equation
\begin{align}
&\dot x^{(1)} = f^{(1)}(x^{(1)}) \nonumber \\ \nonumber
&\dot x^{(2)}=f^{(2)}(x^{(1)}; x^{(2)})
\end{align}
with $x^{(1)}\in V_1$ and $x^{(2)}\in V_2$ is an example of a colored coupled cell network with two colors and one cell of each color. The elements of $\Sigma=\{ \sigma_1^{(1,1)}; \sigma_1^{(1,2)}, \sigma_1^{(2,2)}\}$ are all defined by $\sigma_1^{(d,c)}(1)=1$ and thus form a semigroupoid. See Figure \ref{pict7}.
 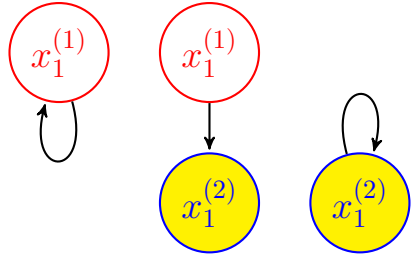
\begin{figure}[ht]\renewcommand{\figurename}{\rm \bf \footnotesize Figure}
\centering
\begin{tikzpicture}[->,>=stealth',shorten >=1pt,auto,node distance=2cm,
                    thick,main node/.style={circle,draw,color=red,font=\sffamily\Large\bfseries},
		    other node/.style={circle,draw,color=blue,fill=yellow,font=\sffamily\Large\bfseries}]

  \node[main node] (1) {$x_1^{(1)}$};
  \node[main node] (2) [right of=1] {$x_1^{(1)}$};
   \node[other node] (3) [below of=2] {$x_1^{(2)}$};

  \node[other node] (4) [right of=3] {$x_1^{(2)}$};

  \path[every node/.style={font=\sffamily\small}]
   (1) edge [loop below] node {} (1)
 (2) edge node {} (3)
(4) edge [loop above] node {} (4)

;
\end{tikzpicture}

\caption{\footnotesize {\rm A colored skew product network.}}
\label{pict7}
\end{figure}

\end{example}

\begin{example}
The $3$-dimensional differential equation
\begin{align}
&\dot x^{(1)}_1 = f^{(1)}(x^{(1)}_2; x^{(2)}_1) \nonumber \\ \nonumber
&\dot x^{(1)}_2 =f^{(1)}(x^{(1)}_2; x^{(2)}_1)\nonumber \\ \nonumber
&\dot x^{(2)}_1 = f^{(2)}(x^{(1)}_2) \nonumber
\end{align}
with $x^{(1)}_1, x^{(1)}_2\in V_1$ and $x^{(2)}_1\in V_2$ is an example of a colored coupled cell network with two colors: two cells of color $1$ and one cell of color $2$. Here, $\Sigma=\{\sigma^{(1,1)}_1, \sigma^{(1,2)}_1; \sigma^{(2,1)}_1\}$ where these maps are defined by 
$$\sigma^{(1,1)}_1(1)=2, \sigma^{(1,1)}_1(2)=2; \sigma^{(1,2)}_1(1)=2; \sigma^{(2,1)}_1(1)=1, \sigma^{(2,1)}_1(2)=1\ .$$
Again, one quickly checks that $\Sigma$ is a semigroupoid. See Figure \ref{pict8}.
 \begin{figure}[ht]\renewcommand{\figurename}{\rm \bf \footnotesize Figure}
\centering
\begin{tikzpicture}[->,>=stealth',shorten >=1pt,auto,node distance=2cm,
                    thick,main node/.style={circle,draw,color=red,font=\sffamily\Large\bfseries},
		    other node/.style={circle,draw,color=blue,fill=yellow,font=\sffamily\Large\bfseries}]

  \node[main node] (1) {$x_1^{(1)}$};
  \node[main node] (2) [below of=1] {$x_2^{(1)}$};
 
   \node[main node] (8) [right of=2] {$x_2^{(1)}$};
  \node[main node] (7) [above of=8] {$x_1^{(1)}$};
 \node[other node] (9) [below of=8] {$x_1^{(2)}$};

  \node[main node] (5) [right of=8] {$x_2^{(1)}$};
   \node[other node] (6) [below of=5] {$x_1^{(2)}$};

  \path[every node/.style={font=\sffamily\small}]
   (2) edge  node {} (1)
 (2) edge [loop below] node {} (2)
(5) edge node {} (6)
 (9) edge node {} (8)
(9) edge [bend right] node  {} (7)
   
;
\end{tikzpicture}
\caption{\footnotesize {\rm An example of a colored network with three cells of two colors.}}
\label{pict8}
\end{figure}
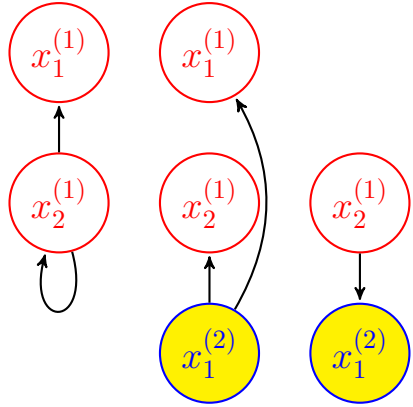

\end{example}

\noindent Under the condition that $\Sigma$ is a semigroupoid, all results of this paper on Lie algebras and normal forms can be generalized to colored coupled cell networks. As an illustration, we state a few facts here without proof.

\begin{theorem}
If $\Sigma$ is a semigroupoid, then for each $\sigma_j^{(d,c)}\in \Sigma$ there is a unique linear map 
$$A_{\sigma_{j}^{(d,c)}}:V_1^{n_{(1,c)}}\times\ldots \times V_C^{n_{(C,c)}} \to V_1^{n_{(1,d)}}\times\ldots \times V_C^{n_{(C,d)}}$$ 
such that for all $1\leq i\leq N_c$ it holds that 
$$A_{\sigma_{j}^{(d,c)}} \circ \pi_i^{(c)}  = \pi^{(d)}_{\sigma_j^{(d,c)}(i)}\ .$$
These maps satisfy the relations $A_{\sigma_{j_1}^{(e,d)}}\circ A_{\sigma_{j_2}^{(d,c)}} = A_{\sigma_{j_1}^{(e,d)}\circ \sigma_{j_2}^{(d,c)}}$ and thus form a representation of the semigroupoid $\Sigma$.
\end{theorem}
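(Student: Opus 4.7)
My plan is to adapt the proof of Theorem \ref{Amaps} essentially verbatim, with the main added work being bookkeeping of color indices. The key observation is that the semigroupoid property allows one to define, for each triple of colors $1\leq c,d,e\leq C$ and each $1\leq j\leq n_{(d,c)}$, a map
$$\widetilde\sigma_j^{(e,d,c)}:\{1,\ldots, n_{(e,d)}\}\to \{1,\ldots, n_{(e,c)}\}\ \mbox{characterized by}\ \sigma^{(e,c)}_{\widetilde\sigma_j^{(e,d,c)}(k)}= \sigma_k^{(e,d)}\circ \sigma_j^{(d,c)}\ .$$
This definition uses exactly the semigroupoid hypothesis: the right-hand side is a map $\{1,\ldots, N_c\}\to \{1,\ldots, N_e\}$, and the semigroupoid property guarantees the existence of a unique $\sigma^{(e,c)}_\cdot$ in $\Sigma$ that equals it.

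Next I would define $A_{\sigma_j^{(d,c)}}: V_1^{n_{(1,c)}}\times\ldots \times V_C^{n_{(C,c)}}\to V_1^{n_{(1,d)}}\times\ldots \times V_C^{n_{(C,d)}}$ componentwise: for $1\leq e\leq C$ and $1\leq k\leq n_{(e,d)}$,
$$\left(A_{\sigma_j^{(d,c)}}(X^{(1)};\ldots; X^{(C)})\right)^{(e)}_k := X^{(e)}_{\widetilde\sigma_j^{(e,d,c)}(k)}\ .$$
This is manifestly linear. Checking the conjugation relation $A_{\sigma_j^{(d,c)}}\circ \pi_i^{(c)} = \pi_{\sigma_j^{(d,c)}(i)}^{(d)}$ reduces, coordinatewise, to the identity $x^{(e)}_{\sigma^{(e,c)}_{\widetilde\sigma_j^{(e,d,c)}(k)}(i)} = x^{(e)}_{\sigma_k^{(e,d)}(\sigma_j^{(d,c)}(i))}$, which is immediate from the defining property of $\widetilde\sigma_j^{(e,d,c)}$.

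For the representation property I would mimic the second half of the proof of Theorem \ref{Amaps}. The chain $\sigma_{j_1}^{(e,d)}\circ\sigma_{j_2}^{(d,c)}\circ\sigma_k^{(f,c)}$ admits two evaluations using the three-tuples $(f,d,c)$ then $(f,e,d)$, or directly via $(f,e,c)$, and associativity of ordinary map composition together with the uniqueness clause in the semigroupoid definition gives an analogue of $\widetilde \sigma_{\widetilde\sigma_{j_1}(j_2)} = \widetilde\sigma_{j_1}\circ\widetilde \sigma_{j_2}$ in the colored setting. Plugging this into the componentwise definition of the $A$-maps yields $A_{\sigma_{j_1}^{(e,d)}}\circ A_{\sigma_{j_2}^{(d,c)}} = A_{\sigma_{j_1}^{(e,d)}\circ \sigma_{j_2}^{(d,c)}}$.

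For the uniqueness clause I would prove a colored analogue of Proposition \ref{injectivity} showing that the maps $\pi_i^{(c)}$, as $i$ ranges over $\{1,\ldots, N_c\}$, have images that jointly span $V_1^{n_{(1,c)}}\times\ldots \times V_C^{n_{(C,c)}}$, exploiting that the maps $\sigma_k^{(e,c)}$ in $\Sigma$ are distinct. I expect this and the index juggling in the representation property to be the only real obstacles: both are essentially combinatorial, and the color structure adds no genuinely new mathematical difficulty beyond more indices. Careful notational hygiene will be the main practical challenge.
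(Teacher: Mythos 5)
Your construction of the maps $A_{\sigma_j^{(d,c)}}$ and your verification of the conjugation relation and of the representation property are correct, and they are exactly the intended colored adaptation of the proof of Theorem \ref{Amaps} (the paper states the colored theorem without proof). The index bookkeeping works: with $\widetilde\sigma_j^{(e,d,c)}(k)$ defined by $\sigma^{(e,c)}_{\widetilde\sigma_j^{(e,d,c)}(k)}=\sigma_k^{(e,d)}\circ\sigma_j^{(d,c)}$, the $(e,k)$-component of $A_{\sigma_j^{(d,c)}}(\pi_i^{(c)}(x))$ is $x^{(e)}_{\sigma_k^{(e,d)}(\sigma_j^{(d,c)}(i))}$ as required, and associativity together with the uniqueness clause in the semigroupoid axiom yields the contravariant identity $\widetilde\sigma_{j_2}^{(f,d,c)}\circ\widetilde\sigma_{j_1}^{(f,e,d)}=\widetilde{(\sigma_{j_1}^{(e,d)}\circ\sigma_{j_2}^{(d,c)})}{}^{(f,e,c)}$, from which the representation property follows.

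The gap is in your plan for the uniqueness clause. The spanning statement you intend to prove --- that $\sum_{i}{\rm im}\,\pi_i^{(c)}$ equals $V_1^{n_{(1,c)}}\times\ldots\times V_C^{n_{(C,c)}}$ because the maps $\sigma_k^{(e,c)}$ are distinct --- is false, already in the monochromatic case $C=1$. In Example \ref{funnyexample} one has $\pi_1(x)=(x_1,x_2,x_2,x_1)$ and $\pi_2(x)=(x_1,x_2,x_1,x_2)$, so for $V=\R$ the linear functional $X_1+X_2-X_3-X_4$ vanishes on ${\rm im}\,\pi_1+{\rm im}\,\pi_2$, even though $\sigma_1,\ldots,\sigma_4$ are the four distinct maps on two symbols; distinctness alone does not rule out such relations (the step ``this can only be true if $\sigma_j=\sigma_k$'' in the proof of Proposition \ref{injectivity} suffers from the same problem). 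Whenever $\sum_i{\rm im}\,\pi_i^{(c)}$ is a proper subspace, any linear map agreeing with your $A_{\sigma_j^{(d,c)}}$ on that subspace satisfies the same conjugation relations, so uniqueness among all linear maps genuinely fails --- note that Theorem \ref{Amaps} itself claims only existence. To repair this, either weaken the claim to uniqueness of the restriction to $\sum_i{\rm im}\,\pi_i^{(c)}$, or prove uniqueness within the class of index-shuffling maps $\lambda_\sigma$ (there the relation $\sigma^{(e,c)}_{\tau(k)}=\sigma_k^{(e,d)}\circ\sigma_j^{(d,c)}$, forced for all $k$, does pin down $\tau$ by distinctness of the elements of $\Sigma$), or simply drop the uniqueness assertion as in the homogeneous case.
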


\begin{theorem}\label{groupoidbracket}
If $\Sigma$ is a semigroupoid, then 
$$\gamma_f\circ \gamma_g = \gamma_{f\circ_{\Sigma} g}$$
in which $(f\circ_{\Sigma}g)^{(c)}$ is equal to
\begin{align}\nonumber
f^{(c)}\circ \left(g^{(1)}\circ A_{\sigma_1^{(1,c)}}\times  \ldots  \times g^{(1)}\circ A_{\sigma_{n_{(1,c)}}^{(1,c)}}\times \ldots \times g^{(C)}\circ A_{\sigma_1^{(C,c)}}\times \ldots \times g^{(C)}\circ A_{\sigma_{n_{(C,c)}}^{(C,c)}} \right) \ .
\end{align}
\end{theorem}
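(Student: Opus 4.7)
The plan is to replay the proof of Theorem \ref{composition} while keeping careful track of the color indices. The key ingredient is the semigroupoid analogue of Theorem \ref{Amaps}, which is the theorem stated immediately before the one to be proved and which supplies the intertwining relation
$$A_{\sigma_j^{(d,c)}}\circ\pi_i^{(c)}=\pi^{(d)}_{\sigma_j^{(d,c)}(i)}$$
for every $1\leq c,d\leq C$, every $1\leq j\leq n_{(d,c)}$ and every $1\leq i\leq N_c$. With this at hand, nothing more than a careful unfolding of definitions is needed.

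First I would unfold $(\gamma_f\circ\gamma_g)^{(c)}_i(x)$ for an arbitrary $x\in V_1^{N_1}\times\ldots\times V_C^{N_C}$. Using $(\gamma_f)^{(c)}_i=f^{(c)}\circ\pi_i^{(c)}$ and the explicit form of $\pi_i^{(c)}$, this evaluates to
$$f^{(c)}\!\left(\ldots,(\gamma_g)^{(d)}_{\sigma_j^{(d,c)}(i)}(x),\ldots\right)=f^{(c)}\!\left(\ldots,g^{(d)}\bigl(\pi_{\sigma_j^{(d,c)}(i)}^{(d)}(x)\bigr),\ldots\right),$$
where the $(d,j)$-entry runs over all pairs with $1\leq d\leq C$, $1\leq j\leq n_{(d,c)}$. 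Applying the intertwining relation entry by entry rewrites the right-hand side as
$$f^{(c)}\!\left(\ldots,g^{(d)}\bigl(A_{\sigma_j^{(d,c)}}\pi_i^{(c)}(x)\bigr),\ldots\right),$$
which, by the definition of $\circ_\Sigma$ given in the statement, is exactly $(f\circ_\Sigma g)^{(c)}(\pi_i^{(c)}(x))=(\gamma_{f\circ_\Sigma g})^{(c)}_i(x)$. Since this holds for all $c$ and $i$, the identity $\gamma_f\circ\gamma_g=\gamma_{f\circ_\Sigma g}$ follows.

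The only real obstacle is bookkeeping: the target of $\pi_i^{(c)}$ is the product $V_1^{n_{(1,c)}}\times\ldots\times V_C^{n_{(C,c)}}$, indexed by pairs $(d,j)$, and one must verify that the order in which the factors $g^{(d)}\circ A_{\sigma_j^{(d,c)}}$ appear in the definition of $(f\circ_\Sigma g)^{(c)}$ matches the order of the entries of $\pi_i^{(c)}$ in the colored Definition \ref{defcolor}. Once the indices are aligned, the argument is simply the colored translation of the one-color computation in Theorem \ref{composition}; the semigroupoid hypothesis enters only through the existence and composition rule of the $A_{\sigma_j^{(d,c)}}$, and no separate bracket-style calculation is required.
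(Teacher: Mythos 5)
Your argument is correct and is exactly the intended one: the paper states this theorem without proof, remarking only that the homogeneous results generalize, and the natural proof is precisely the colored replay of Theorem \ref{composition} via the semigroupoid intertwining relation $A_{\sigma_j^{(d,c)}}\circ\pi_i^{(c)}=\pi^{(d)}_{\sigma_j^{(d,c)}(i)}$, which is what you carry out. Your remark about aligning the $(d,j)$-ordering of the factors with the entries of $\pi_i^{(c)}$ is the only bookkeeping point, and it checks out against Definition \ref{defcolor}.
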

\begin{theorem}
If $\Sigma$ is a semigroupoid, then
$$[\gamma_f, \gamma_g]=\gamma_{[f,g]_{\Sigma}}$$
in which $[f,g]_{\Sigma}^{(c)}$ equals
$$\sum_{d}\sum_{j} \left( D_{X_j^{(d)}}f^{(c)} \cdot ( g^{(d)}\circ A_{\sigma_j^{(d,c)}} )  -  D_{X_j^{(d)}} g^{(c)} \cdot ( f^{(d)}\circ A_{\sigma_j^{(d,c)}}) \right) \ .$$ 
\end{theorem}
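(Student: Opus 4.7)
The plan is to mimic the proof of Theorem \ref{brackettheorem} in the colored setting, since the only structural ingredient used there is the intertwining relation $A_{\sigma_j}\circ\pi_i=\pi_{\sigma_j(i)}$, and its direct semigroupoid analogue $A_{\sigma_j^{(d,c)}}\circ\pi_i^{(c)}=\pi^{(d)}_{\sigma_j^{(d,c)}(i)}$ has just been established in the preceding theorem. First I would expand $(\gamma_f(x+t\gamma_g(x)))_i^{(c)}=f^{(c)}\bigl(\pi_i^{(c)}(x+t\gamma_g(x))\bigr)$ as a function of $t$. The coordinate of $x$ that occupies the $(d,j)$-slot of $\pi_i^{(c)}$ is $x^{(d)}_{\sigma_j^{(d,c)}(i)}$, and its perturbation is
\[
\gamma_g(x)^{(d)}_{\sigma_j^{(d,c)}(i)} \;=\; g^{(d)}\bigl(\pi^{(d)}_{\sigma_j^{(d,c)}(i)}(x)\bigr) \;=\; g^{(d)}\bigl(A_{\sigma_j^{(d,c)}}\pi_i^{(c)}(x)\bigr),
\]
where the second equality uses the intertwining relation. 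Differentiating at $t=0$ and applying the chain rule yields
\[
\bigl(D\gamma_f(x)\cdot\gamma_g(x)\bigr)_i^{(c)} \;=\; \sum_{d=1}^{C}\sum_{j=1}^{n_{(d,c)}} D_{X_j^{(d)}}f^{(c)}\bigl(\pi_i^{(c)}(x)\bigr)\cdot g^{(d)}\bigl(A_{\sigma_j^{(d,c)}}\pi_i^{(c)}(x)\bigr),
\]
which factors as $\bigl(\sum_{d,j} D_{X_j^{(d)}}f^{(c)}\cdot (g^{(d)}\circ A_{\sigma_j^{(d,c)}})\bigr)\circ\pi_i^{(c)}$. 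In particular, $D\gamma_f\cdot\gamma_g$ is again a colored coupled cell network vector field.

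Next I would carry out the symmetric computation for $D\gamma_g\cdot\gamma_f$ and subtract. The result reads $(\gamma_{[f,g]_\Sigma})_i^{(c)}=[f,g]_\Sigma^{(c)}\circ\pi_i^{(c)}$ with $[f,g]_\Sigma^{(c)}$ exactly the double sum in the statement, which by the definition (\ref{defnormalbracket}) of the Lie bracket of vector fields coincides with $[\gamma_f,\gamma_g]_i^{(c)}$. This establishes the identity $[\gamma_f,\gamma_g]=\gamma_{[f,g]_\Sigma}$.

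The main (and essentially only) obstacle is notational: in the homogeneous case the sum in (\ref{bracketdef}) ran over a single input index $j$, whereas in the colored case one must simultaneously track the color $d$ of the source cell and the input label $j$ at each vertex, which is why the single sum is replaced by the double sum $\sum_{d}\sum_{j}$. No new analytic content is required; the semigroupoid hypothesis enters exclusively through the intertwining identity furnished by the preceding theorem, after which the chain rule does the rest. As a consequence, the Jacobi identity, the Lie-algebra homomorphism property of $\gamma$, and the normal form theorems of Sections \ref{bracketsection}--\ref{normalformsection} transcribe to the colored setting with no changes beyond the same indexing bookkeeping.
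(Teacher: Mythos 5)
Your proof is correct and follows exactly the route the paper intends: the paper states this theorem without proof, explicitly deferring to the homogeneous argument of Theorem \ref{brackettheorem}, and your adaptation via the intertwining relation $A_{\sigma_j^{(d,c)}}\circ\pi_i^{(c)}=\pi^{(d)}_{\sigma_j^{(d,c)}(i)}$ is precisely that generalization. Nothing is missing.
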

In turn, Theorem \ref{groupoidbracket} can be used to prove normal form theorems for colored coupled cell networks. That is, the theorems of Sections \ref{normalformsection} and \ref{SNsection} remain true with the word ``semigroup'' replaced by ``semigroupoid''. 

We conclude with two results that say that the network symmetries and the robust synchrony spaces of a network remain unchanged by the semigroupoid extension.

\begin{lemma}
Let $\Sigma$ be as in Definition \ref{defcolor}, not necessarily forming a semigroupoid, and let $p$ be a permutation of the cells so that the restriction $p: \{1, \ldots, N_c\}\to \{1, \ldots, N_c\}$ preserves the cells of each color. We say that $p$ is a {\rm network symmetry} if $$p\circ \sigma^{(d,c)}_j=\sigma^{(d,c)}_j\circ p\ \mbox{for all} \ 1\leq c,d\leq C\ \mbox{and all}\ 1\leq j\leq n_{(d,c)}\, .$$
This means that $\lambda_p$ sends orbits of $\gamma_f$ to orbits of $\gamma_f$.

Then the collection of network symmetries of $\Sigma$ is the same as the collection of network symmetries of the semigroupoid $\Sigma'$ generated by $\Sigma$.
\end{lemma}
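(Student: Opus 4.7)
The plan mirrors the proof of the homogeneous version (Lemma~\ref{symlem}), with extra bookkeeping to track colors. First I observe the easy inclusion: since $\Sigma \subseteq \Sigma'$, any $p$ commuting with every element of $\Sigma'$ automatically commutes with every element of $\Sigma$, so a network symmetry of $\Sigma'$ restricts to a network symmetry of $\Sigma$.

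For the nontrivial inclusion, suppose $p = (p_1, \ldots, p_C)$ is a network symmetry of $\Sigma$, meaning $p_d \circ \sigma^{(d,c)}_j = \sigma^{(d,c)}_j \circ p_c$ for every generator. By construction, any element of the generated semigroupoid $\Sigma'$ can be written as a color-compatible composition
\[
\tau = \sigma^{(c_l, c_{l-1})}_{j_1} \circ \sigma^{(c_{l-1}, c_{l-2})}_{j_2} \circ \cdots \circ \sigma^{(c_1, c_0)}_{j_l},
\]
where the color indices chain so that each successive composition is defined (the codomain color of each factor matches the domain color of the next). I then argue by induction on the length $l$ of such a representative chain. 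The base case $l = 1$ is precisely the hypothesis. For the inductive step, I slide $p$ past the leftmost factor using the commutation relation for that generator, write $\tau = \sigma^{(c_l, c_{l-1})}_{j_1} \circ \tau'$ where $\tau'$ is a composable chain of length $l-1$ ending in color $c_0$ and starting in color $c_{l-1}$, and compute
\[
p_{c_l} \circ \tau = \bigl(p_{c_l} \circ \sigma^{(c_l, c_{l-1})}_{j_1}\bigr)\circ \tau' = \sigma^{(c_l, c_{l-1})}_{j_1} \circ \bigl(p_{c_{l-1}} \circ \tau'\bigr) = \sigma^{(c_l, c_{l-1})}_{j_1} \circ \tau' \circ p_{c_0} = \tau \circ p_{c_0},
\]
where the third equality is the inductive hypothesis applied to $\tau'$. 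This is exactly the statement that $p$ commutes with $\tau$ at the correct color subscripts.

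The only real subtlety — and the reason the colored statement is slightly more delicate than its monochromatic counterpart — is keeping the color types consistent throughout the sliding argument: each pass of $p$ through a generator changes the subscript of $p$ from the codomain color to the domain color of that generator, and it must be checked that these subscripts match up all the way down the chain. Once the chain is written with explicit color indices this is automatic, and the induction closes without further work, proving that the collection of network symmetries of $\Sigma$ coincides with that of $\Sigma'$.
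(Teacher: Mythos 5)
Your proof is correct and follows the same route the paper intends: the paper states this lemma without proof as a direct generalization of Lemma~\ref{symlem}, whose argument (commuting $p$ past each generator in a composition representing an element of $\Sigma'$) is exactly what you carry out, with the color indices of the restrictions $p_c$ tracked explicitly through the induction. The easy inclusion via $\Sigma\subseteq\Sigma'$ and the chain-sliding induction together establish the claimed equality of the two collections of network symmetries.
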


\begin{lemma}
Let $\Sigma$ be as in Definition \ref{defcolor}, not necessarily forming a semigroupoid, and let $P=\{P^{(1)}, \ldots, P^{(C)}\}$ be a collection of partitions, i.e. for all $1\leq c\leq C$, we have that $P^{(c)}=\{P^{(c)}_1, \ldots, P^{(c)}_{r_c}\}$ is a partition of $\{1, \ldots, N_c\}$. Then the following are equivalent:
\begin{itemize}
\item[i)] The collection of partitions is balanced, i.e. for all $1\leq c,d\leq C$, all $1\leq j\leq n_{(d,c)}$ and all $1\leq k_1 \leq r_c$ there exists a $1\leq k_2\leq r_d$ so that $\sigma^{(d,c)}_j(P^{(c)}_{k_1})\subset P^{(d)}_{k_2}$.
\item[2)]  The subspace 
$$ \hspace{-.5cm} {\rm Syn}_{P}\! :=\! \{x\in V^{N_1}\times\ldots\times V^{N_C}\, |\ x^{(c)}_{i_1}=x^{(c)}_{i_2} \, \mbox{when} \ i_1 \ \mbox{and}\ i_2\ \mbox{are in the same element of}\ P^{(c)} \}$$
is a robust synchrony space for the networks subject to $\Sigma$.
\end{itemize}
The collection of robust synchrony spaces of $\Sigma$ is the same as the collection of robust synchrony spaces of the semigroupoid $\Sigma'$ generated by $\Sigma$.
\end{lemma}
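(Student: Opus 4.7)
My plan is to follow essentially the same two-step strategy as the proofs of Proposition \ref{balanced} and Lemma \ref{robustness}, generalized from the homogeneous case to the colored setting. The only new bookkeeping is that one must track the color of each cell and of each input, since the maps $\sigma_j^{(d,c)}$ now carry source and target colors.

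For the equivalence of i) and ii), I would first reformulate invariance of ${\rm Syn}_P$ under $\gamma_f$ as tangency of the vector field (equivalently, for the discrete-time case, preservation as a set). Since ${\rm Syn}_P$ is cut out by the linear equations $x^{(c)}_{i_1}=x^{(c)}_{i_2}$ whenever $i_1,i_2$ lie in the same block of $P^{(c)}$, invariance amounts to the requirement that, for every color $c$ and every such pair $i_1,i_2$, one has
\[
f^{(c)}(\pi_{i_1}^{(c)}(x))=f^{(c)}(\pi_{i_2}^{(c)}(x))\quad\text{for all } x\in{\rm Syn}_P.
\]
Demanding this for \emph{every} admissible $f=(f^{(1)},\ldots,f^{(C)})$ forces the much stronger coordinatewise identity $\pi_{i_1}^{(c)}(x)=\pi_{i_2}^{(c)}(x)$ on ${\rm Syn}_P$, because the $f^{(c)}$ can be chosen to separate any pair of distinct vectors in $V_1^{n_{(1,c)}}\times\cdots\times V_C^{n_{(C,c)}}$. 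Reading off the components of $\pi_i^{(c)}$, this coordinatewise identity says exactly that for every $d$ and every $1\leq j\leq n_{(d,c)}$ the cells $\sigma_j^{(d,c)}(i_1)$ and $\sigma_j^{(d,c)}(i_2)$ lie in the same block of $P^{(d)}$. This is nothing but condition i). The converse direction is automatic, since if i) holds the stated coordinatewise identity is clearly satisfied on ${\rm Syn}_P$ and hence any admissible $f$ preserves ${\rm Syn}_P$.

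For the last claim of the lemma, about $\Sigma$ and the generated semigroupoid $\Sigma'$, I would argue as in Lemma \ref{robustness}: every element of $\Sigma'$ has the form $\sigma^{(c_l,c_{l-1})}_{j_l}\circ\cdots\circ\sigma^{(c_1,c_0)}_{j_1}$ with each factor belonging to $\Sigma$. If the partition family $P$ is balanced for $\Sigma$, then each factor sends a block of $P^{(c_{k-1})}$ into a block of $P^{(c_k)}$, and composing these inclusions shows that the whole composition sends a block of $P^{(c_0)}$ into a block of $P^{(c_l)}$. Thus $P$ is balanced for $\Sigma'$. The reverse inclusion is trivial because $\Sigma\subset\Sigma'$. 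Hence $\Sigma$ and $\Sigma'$ determine identical collections of balanced partitions, and via the already established equivalence i)$\Leftrightarrow$ii) identical collections of robust synchrony spaces.

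The only step that requires any care is the separation argument used in i)$\Leftarrow$ii): in the colored setting one must verify that for each fixed $c$ the maps $f^{(c)}$ can still be chosen freely enough to distinguish any two distinct elements of $V_1^{n_{(1,c)}}\times\cdots\times V_C^{n_{(C,c)}}$, which is immediate from the fact that Definition \ref{defcolor} places no constraint relating the different $f^{(c)}$'s — each $f^{(c)}$ is an arbitrary smooth function on its product space. With that observation in hand, both directions of i)$\Leftrightarrow$ii) and the invariance under passing to $\Sigma'$ follow by the routine bookkeeping sketched above.
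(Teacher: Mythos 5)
Your proposal is correct: the paper itself states this lemma without proof (it explicitly says the colored results are stated ``without proof'' as generalizations of the homogeneous case), and your argument is precisely the intended colored-network version of the proofs of Proposition \ref{balanced} and Lemma \ref{robustness} — tangency of $\gamma_f$ to ${\rm Syn}_P$ for all $f$ reduces, via the separation argument, to the coordinatewise condition $\pi_{i_1}^{(c)}=\pi_{i_2}^{(c)}$ on ${\rm Syn}_P$, which is balancedness, and balancedness is preserved under composition so $\Sigma$ and $\Sigma'$ have the same balanced partitions. No gaps.
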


\bibliography{CoupledNetworks}
\bibliographystyle{amsplain}

  \end{document}